\documentclass[12pt,a4paper,reqno]{amsart}
\usepackage[utf8]{inputenc}
\usepackage[T1]{fontenc}
\usepackage[english]{babel}
\usepackage{verbatim}

\usepackage{xcolor}
\usepackage{amsmath}
\usepackage{amssymb}
\usepackage{amsthm}
\usepackage{lmodern}
\usepackage{latexsym}
\usepackage{amsfonts}
\usepackage{mathrsfs}
\usepackage[all]{xy}
\usepackage{bm}
\usepackage{yfonts}
\newcommand{\ind}{\mathbf{1}}

\numberwithin{equation}{section}

\usepackage[margin=1in]{geometry}

\theoremstyle{plain}
\newtheorem{thm}{Theorem}
\newtheorem{lem}{Lemma}[section]
\newtheorem{prop}[lem]{Proposition}
\newtheorem{cor}[lem]{Corollary}

\theoremstyle{definition}

\theoremstyle{remark}
\newtheorem*{rem}{Remark}

\newcommand{\Z}{\mathbb{Z}}

\newcommand{\R}{\mathbb{R}}

\newcommand{\T}{\mathbb{T}}
\renewcommand{\P}{\mathbb{P}}
\newcommand{\E}{\mathbb{E}}

\newcommand{\CB}{\mathcal{B}}
\newcommand{\CF}{\mathcal{F}}
\newcommand{\CG}{\mathcal{G}}
\newcommand{\CH}{\mathcal{H}}
\newcommand{\CP}{\mathcal{P}}
\newcommand{\CQ}{\mathcal{Q}}
\newcommand{\CS}{\mathcal{S}}
\newcommand{\CX}{\mathcal{X}}

\newcommand{\dee}{\mathrm{d}}

\newcommand{\eps}{\varepsilon}
\renewcommand{\epsilon}{\varepsilon}
\renewcommand{\phi}{\varphi}
\renewcommand{\bar}[1]{\overline{#1}}

\renewcommand{\le}{\leqslant}

\renewcommand{\ge}{\geqslant}

\begin{document}

\title[Sharp almost sure upper bound for random multiplicative functions]{A sharp almost sure upper bound for partial sums of random multiplicative functions}
\author[B. Durkan]{Benjamin Durkan}
\address{Department of Mathematics, University of Manchester, Manchester, M13 9PL, United Kingdom}
\email{benjamin.durkan@postgrad.manchester.ac.uk}

\author[A. Pearce-Crump]{Andrew Pearce-Crump}
\address{School of Mathematics, University of Bristol, Fry Building, Woodland Road, Bristol, BS8 1UG, United Kingdom}
\email{andrew.pearce-crump@bristol.ac.uk}

\subjclass[2020]{Primary 11N64; Secondary 11K65, 60G42, 60G44}
\keywords{Random multiplicative functions, almost sure bounds, martingales, hypercontractivity}
\date{\today}

\begin{abstract}
We prove that, for either a Steinhaus random multiplicative function or a Rademacher random multiplicative function $f$, and every $\eps>0$, almost surely
$$
\left|\sum_{n\le x}f(n)\right|\ll_{\eps,f}\sqrt{x}(\log\log x)^{1/4+\eps}.
$$
Together with Harper's almost sure lower bound, this determines the sharp logarithmic exponent in both models. This settles Harper's conjecture on the large fluctuations of random multiplicative functions.
\end{abstract}

\maketitle

\section{Introduction}\label{sec:intro}

Let $f$ be a random multiplicative function, and write
$$
        M_f(x)=\sum_{n\le x}f(n).
$$
We are concerned with the almost sure size of $M_f(x)$ as $x$ tends to infinity.  The randomness is placed at the primes and then extended multiplicatively.  In the Steinhaus model, the variables $f(p)$ are independent and uniform on the unit circle $\T$, and $f$ is extended completely multiplicatively:
$$
        f(n)=\prod_{p^a\Vert n}f(p)^a.
$$
In the Rademacher model, the variables $f(p)$ are independent symmetric signs taking values $\pm 1$ with probability $1/2$ each, and
\begin{equation}\label{eq:rademacher-definition}
        f(n)=\mu^2(n)\prod_{p\mid n}f(p).
\end{equation}
Thus a Rademacher random multiplicative function is supported on the squarefree integers. The Rademacher model was introduced by Wintner \cite{Wintner} as a probabilistic model for the M\"obius function.  It keeps the squarefree support and the signs, but replaces the arithmetic prime values by independent ones.  The Steinhaus model is the corresponding complex-valued model; it is natural when the prime values are phases rather than signs.  In both cases the independence stops at the primes.  For instance, if $(m,n)=1$, then $f(mn)=f(m)f(n)$, so the values of $f$ at different integers are tied together by exact multiplicative relations.

Unique factorisation nevertheless gives orthogonality.  In the Steinhaus model,
\begin{equation}\label{eq:orthogonality}
        \E[f(m)\overline{f(n)}]=\ind_{m=n},
\end{equation}
whereas in the Rademacher model,
\begin{equation}\label{eq:rademacher-orthogonality}
        \E[f(m)f(n)]=\mu^2(n)\ind_{m=n}.
\end{equation}
Consequently,
$$
        \E|M_f(x)|^2=
        \begin{cases}
        \lfloor x\rfloor,&\text{in the Steinhaus model},\\
        \displaystyle\sum_{n\le x}\mu^2(n)\sim\dfrac{6}{\pi^2}x,
        &\text{in the Rademacher model}.
        \end{cases}
$$
Thus $\sqrt{x}$ is the natural second-moment scale.  The question here is not, however, a second-moment question at a fixed point.  It is an upper-bound problem for the correlated family $(M_f(x))_{x\ge1}$.

The classical comparison is with a random walk.  If $(a_k)_{k\ge1}$ are independent symmetric signs, then the law of the iterated logarithm states that almost surely
$$
        \limsup_{N\to\infty}
        \frac{\left|\sum_{k\le N}a_k\right|}
        {\sqrt{2N\log\log N}}=1.
$$
At a fixed $N$ one has $\E|\sum_{k\le N}a_k|\asymp\sqrt{N}$, while the largest values as $N$ varies are larger by a factor of order $(\log\log N)^{1/2}$.  The law of the iterated logarithm cannot be applied to $M_f(x)$, since the summands $f(n)$ are not independent, but it makes clear why a fixed-$x$ estimate need not give the almost sure scale.

For random multiplicative functions, the fixed-$x$ first moment was determined by Harper \cite{HarperLow}.  In both models,
\begin{equation}\label{eq:harper-first-moment}
        \E|M_f(x)|\asymp\frac{\sqrt{x}}{(\log\log x)^{1/4}}.
\end{equation}
This is an $L^1$ statement at one point as opposed to an almost sure estimate.  If the passage from a fixed point to the upper bound contributes the same factor $(\log\log x)^{1/2}$ as for a random walk, then \eqref{eq:harper-first-moment} leads to
$$
        \frac{\sqrt{x}}{(\log\log x)^{1/4}}
        (\log\log x)^{1/2}
        =\sqrt{x}(\log\log x)^{1/4}.
$$
This heuristic singles out the exponent $1/4$.  Its proof cannot come from an independence argument since one has to understand how the multiplicative correlations persist as $x$ varies.

Almost sure upper bounds for $M_f(x)$ go back to Wintner.  In the Rademacher model he proved that, for every $\eps>0$,
$$
        M_f(x)\ll_{\eps,f}x^{1/2+\eps}
$$
almost surely.  Hal\'asz \cite{Halasz} later improved this to $M_f(x)=x^{1/2+o(1)}$ almost surely; in particular, for every $\eps>0$,
$$
        M_f(x)\ll_{\eps,f}
        \sqrt{x}\exp\left((\log\log x)^{1/2+\eps}\right).
$$
Basquin \cite{Basquin} and, independently, Lau--Tenenbaum--Wu \cite{LauTenenbaumWu} brought the problem to the iterated-logarithmic scale, proving in the Rademacher model that
\begin{equation}\label{eq:ltw-basquin}
        M_f(x)\ll_{\eps,f}\sqrt{x}(\log\log x)^{2+\eps}
\end{equation}
almost surely.

The corresponding lower bound was obtained by Harper \cite{HarperFluct}.  For either model, and for every function $V(x)$ tending to infinity, almost surely there are arbitrarily large $x$ such that
\begin{equation}\label{eq:harper-lower}
        |M_f(x)|\ge
        \frac{\sqrt{x}(\log\log x)^{1/4}}{V(x)}.
\end{equation}
In particular, the exponent $1/4$ cannot be replaced by any smaller fixed exponent in an almost sure upper bound of this form.  Harper conjectured that the matching upper bound should hold with an arbitrary additional factor $(\log\log x)^\eps$.

The first upper bound with this exponent was proved for a restricted sum.  Write $P(n)$ for the largest prime factor of $n$, with $P(1)=1$.  Mastrostefano \cite{Mastrostefano} proved in both models that, almost surely,
$$
        \left|\sum_{\substack{n\le x\\P(n)>\sqrt{x}}}f(n)\right|
        \ll_{\eps,f}\sqrt{x}(\log\log x)^{1/4+\eps}.
$$
The restriction $P(n)>\sqrt{x}$ isolates the largest prime factor.  Such an integer has a unique representation $n=pm$, where $p=P(n)>\sqrt{x}$ and $m<p$; in particular, the largest prime occurs only once.  The full sum also contains integers whose largest prime lies below $\sqrt{x}$, and these primes must be treated over all possible ranges.

Caich \cite{Caich} developed a decomposition by the largest prime factor, together with a smoothing argument for the quadratic variation of the resulting prime martingale, and proved in both models that
\begin{equation}\label{eq:caich-bound}
        |M_f(x)|\ll_{\eps,f}
        \sqrt{x}(\log\log x)^{3/4+\eps}
\end{equation}
almost surely.  His proof divides the prime variable into $J\asymp\log\log x$ ranges.  The remaining gap between \eqref{eq:caich-bound} and \eqref{eq:harper-lower} comes from the simultaneous control of the logarithmic mean squares associated with these ranges.

The main result of this paper gives the conjectured exponent for the full sum.

\begin{thm}\label{thm:main}
Let $f$ be either a Steinhaus random multiplicative function or a Rademacher random multiplicative function.  For every $\eps>0$, almost surely
$$
        |M_f(x)|\ll_{\eps,f}
        \sqrt{x}(\log\log x)^{1/4+\eps}.
$$
\end{thm}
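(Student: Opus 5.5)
We follow the Lau--Tenenbaum--Wu/Basquin/Caich skeleton and change one ingredient: the control of the quadratic variation, where we want to lose only $(\log\log x)^{1/4}$ rather than $(\log\log x)^{3/4}$.

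\textbf{Reduction to test points.} First we reduce to a sparse sequence $x_i$. We take $x_i=\exp(i^{c})$ for a suitable $c<1$, and within each $[x_{i},x_{i+1}]$ a finer grid spaced by $x_i/(\log x_i)^{A}$. Between consecutive grid points the increment of $M_f$ is a short sum. Its $2q$-th moment is controlled by hypercontractivity (Bonami-type estimates for polynomials in the $f(p)$). A union bound over the grid then makes these increments negligible almost surely.

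\textbf{Martingale decomposition at a grid point.} We decompose
\[
M_f(x)=\sum_{p\le x}f(p)\,M_f^{(p)}(x/p)+\ldots,
\]
where $M_f^{(p)}(y)$ sums $f(m)$ over $m\le y$ with $P(m)<p$. In the Steinhaus case prime powers of the largest prime are also included; they are handled separately and are harmless. This writes $M_f(x)$ as the terminal value of a martingale indexed by primes. For the sub-Gaussian tail of such a martingale we use a Freedman-type inequality:
\[
\P\bigl(|M|>\lambda,\ V\le v\bigr)\le 2e^{-\lambda^2/(2v)},
\qquad
V=\sum_p |M^{(p)}_f(x/p)|^2 .
\]
Choosing $\lambda=\sqrt{x}(\log\log x)^{1/4+\eps}$, it suffices to show that almost surely, for all grid $x$ simultaneously,
\[
V(x)\le \frac{x}{(\log\log x)^{1/2-\eps}}\cdot\frac{1}{\log x}\cdot\log x.
\]
In words, the normalised quadratic variation $\frac1x\sum_p|M^{(p)}(x/p)|^2$ should be of size $(\log\log x)^{-1/2+\eps}$, not merely $O(1)$. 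This is the key step. Since $\lambda^2/v\asymp(\log\log x)^{1+2\eps}$, the resulting probability $\exp(-(\log\log x)^{1+2\eps})$ is summable over the grid, which has $\exp(O(\log\log x))$ points per dyadic-in-$\log\log$ block. Borel--Cantelli then finishes.

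\textbf{Main obstacle: the quadratic variation bound.} Following Caich, we smooth the sum over $p$ into an integral. This gives
\[
V(x)\approx x\int |M_f^{(<t)}(x/t)|^2\,\frac{dt}{t^2\log t},
\]
which we split into $J\asymp\log\log x$ ranges of $t$. Harper's multiplicative chaos method converts each range, via Parseval/Plancherel, into
\[
\int |F_t(1/2+i s)|^2\,\frac{ds}{|s|^2+1}
\]
for Euler products $F_t$ over primes below $t$. Harper's low-moment computation gives that such an integral, raised to a power $q<1$ (critical multiplicative chaos), has expectation $\asymp(\log\log)^{-q/2}$ after normalisation.

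The difficulty is making this hold almost surely and uniformly over all ranges and all $x$, not just in $L^q$ at a single point. We would handle this as follows.
\begin{itemize}
\item[(i)] Replace each range's mean square by a single random variable depending only on $F$ at a coarser scale, namely the Euler product over primes up to $x^{1/\log\log x}$. This variable is monotone enough that one event controls all $x$ in a block $[x_i,x_{i+1}]$.
\item[(ii)] Use Markov with $q=1-\delta$ and Harper's bound to get summable probabilities, at the cost of $(\log\log x)^{\eps}$.
\item[(iii)] Handle the rare $x$ where the Euler product is atypically large through a barrier event on the random walk $\sum_{p\le y}\Re f(p)p^{-1/2-is}$, as in Harper's lower-bound and first-moment arguments. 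This barrier costs only another $(\log\log)^\eps$.
\end{itemize}
Step (i) is where Caich lost a power of $\log\log x$, by summing $J$ separate bounds. Our remedy is to bound the sum over ranges jointly: the whole quadratic variation, at the $q$-th moment, is controlled by one critical chaos integral. We expect this joint bound to be the crux of the proof.
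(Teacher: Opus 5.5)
Your outer skeleton (test points, the largest-prime martingale, a stopped Hoeffding/Freedman inequality, and the target $V(x)/x\lesssim(\log\log x)^{-1/2+\eps}$ off a summable exceptional set) is the same as the paper's. But the step you yourself call the crux is never supplied, and the mechanism sketched in (i)--(ii) cannot supply it.

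\textbf{Why (i)--(ii) fail.} A range's mean square $M_\ell(x,y_j;f)$ involves smooth sums with cutoffs in $[y_{j-1},y_j]$, hence all primes up to $y_j$. It therefore cannot be dominated pointwise by a variable built from the Euler product up to $x^{1/\log\log x}$; only its conditional expectation given $\CF_{y_0}$ is controlled that way. Parseval also does not apply directly inside a range, since the cutoff $x/z$ moves with $z$.

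Uniformity over $[x_i,x_{i+1}]$ is too weak as well. Fixed-moment Markov bounds give failure probabilities only polynomially small in $L\asymp\log\log x$, while there are $\exp(O(L))$ test points per range. So the dominating variables must not depend on the test point at all.

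Most importantly, there are $J\asymp L$ prime ranges, and applying Markov with $q\le1$ to each and then a union bound loses a factor $L$. Let $U_j$ be a test-point-free majorant of the $j$-th range and $I_{j-1}\approx L^{-1/2}$ the normalised Euler-product integral at $y_{j-1}$. With conditional first moments, $\P(U_j>H\mid\CF_{y_{j-1}})\le I_{j-1}/H$ forces $H\approx L^{1/2}$. Hence $R(x)^2\approx L^{1+\eps}H\approx L^{3/2+\eps}$, which gives exponent $3/4$; this is precisely where Caich loses. With $q<1$ the loss is worse. Your ``joint bound by one critical chaos integral'' and the barrier in (iii) are asserted rather than constructed, so as written the proposal does not get below $3/4$.

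\textbf{The two ingredients you are missing.} First, the tilted normalised integrals $I_j=(\log y_j/\log y_0)^{-1/L}(\log y_j)^{-1}\int_{\R}|F_{y_j}(\tfrac12+it)|^2|\tfrac12+it|^{-2}\,\dee t$ form a nonnegative supermartingale over the blocks. The tilt offsets the Euler factor of each block, whose reciprocal-prime mass is $1/\ell$. Harper's bound $\E I_0^{2/3}\ll L^{-1/3}$ and Doob's maximal inequality then give $\max_jI_j\le T_1(\ell)^{1/2}L^{-1/2}$ for all blocks at once, outside an event of probability $\ll T_1(\ell)^{-1/3}$.

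Second, consider the block variable $U_j$, which integrates over all $z$ and maximises over the cutoffs $q\in\CQ_j$, so that it dominates $M_\ell(x_i,y_j;f)$ for every test point in the range. For fixed $r\ge2$ it satisfies $\E[U_j^r\mid\CF_{y_{j-1}}]\ll_rI_{j-1}^r$, via conditional Doob along the cutoffs and hypercontractivity on the new primes of the block. On the stopping event, conditional Markov at threshold $T_1(\ell)L^{-1/2+1/r}$ gives probability $T_1(\ell)^{-r/2}L^{-1}$ per block. This exactly pays for the union over $J\ll L$ blocks. The price $L^{1/r}$ yields exponent $1/4+1/(2r)+\eta$, and letting $r\to\infty$ gives the theorem.

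\textbf{A smaller point.} Your interpolation step needs a maximal bound within each short interval, not only moments of increments between grid points. The trivial bound on a sum of length $x/(\log x)^A$ is far larger than $\sqrt x$; the paper handles this with dyadic chaining and Shiu's divisor bound.
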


Together with \eqref{eq:harper-lower}, this determines the logarithmic exponent of the almost sure upper bound in both models.  The proof gives the following more precise fixed-order statement.

\begin{prop}\label{prop:fixed-r}
Let $f$ be either a Steinhaus random multiplicative function or a Rademacher random multiplicative function.  Let $r\ge2$ be an integer and let $\eta>0$.  Then, almost surely,
$$
        |M_f(x)|\ll_{r,\eta,f}
        \sqrt{x}(\log\log x)^{1/4+1/(2r)+\eta}.
$$
\end{prop}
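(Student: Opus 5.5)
We follow the architecture of Caich's argument and change only the step that produced the loss of $(\log\log x)^{1/2}$ in \eqref{eq:caich-bound}. First we reduce to test points. Since $f$ takes values of modulus at most $1$, $M_f(y)-M_f(x)$ is trivially bounded by $y-x+1$. It therefore suffices to prove the bound along a sparse sequence $x_k$ (e.g.\ $x_k=e^{k^{c}}$, or a slightly denser grid), and then to control the oscillation between consecutive test points. That oscillation we treat by the standard dyadic chaining of short-interval sums, combined with high moment bounds for $\sum_{x<n\le x+h}f(n)$.

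At each test point we decompose $M_f(x)$ by the largest prime factor, as in Caich:
$$M_f(x)=1+\sum_{p\le x} f(p)\sum_{\substack{m\le x/p\\ P(m)<p}}f(m).$$
For Rademacher, $m$ is squarefree and $p\nmid m$; for Steinhaus we add harmless prime-power terms. With the prime variable split into $J\asymp\log\log x$ ranges $(x^{e^{-j-1}},x^{e^{-j}}]$, each piece is a martingale in the primes. Its conditional variance is, up to smoothing,
$$V_j(x)\approx \sum_{p\in I_j}\Big|\sum_{m\le x/p,\,P(m)<p}f(m)\Big|^2\approx \frac{x}{\log x}\int_{\text{range}}\frac{|F_{j}(1/2+it)|^2}{|1/2+it|^2}\,dt,$$
where $F_j$ is an Euler product over primes below the range. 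A martingale exponential inequality (Freedman) then bounds the $j$-th piece by $\sqrt{V_j\log\log x}$ outside an event of probability $(\log x)^{-A}$. This is summable along test points once $\log x_k$ grows polynomially in $k$.

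Summing over $j$ via Cauchy--Schwarz gives
$$|M_f(x)|\ll\sqrt{\log\log x\sum_j V_j(x)}.$$
The required input is therefore
$$\sum_j V_j(x)\ll x(\log\log x)^{-1/2+1/r+2\eta}$$
almost surely at test points. The expectation of $\sum_j V_j$ is only of order $x$. The critical-multiplicative-chaos behaviour behind \eqref{eq:harper-first-moment} shows that a typical realization is smaller, roughly $x/\sqrt{\log\log x}$. To make this almost sure we use fractional moments: by Harper's low-moment method, $\E\big[(\sum_j V_j)^{q}\big]\ll (x/\sqrt{\log\log x})^q$ for $q<1$ close to $1$. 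Markov's inequality then gives the bound with probability loss $(\log\log x)^{-(1-q)/2\cdot(\text{power})}$. That loss is only a power of $\log\log x$, so it is not summable over $k$.

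\textbf{Main obstacle.} The difficulty is exactly this summability, which is where $r$ enters. We group test points into blocks where $\log\log x$ is essentially constant, and use that the Euler products $F_j$ for nearby $x$ are strongly correlated: they depend on the same small primes. This lets us control $\sup$ over a block of the truncated quantity at once. We would take the $r$-th moment of the smooth, small-prime part of the Euler product (hypercontractivity on the Steinhaus/Rademacher chaos, comparing $L^r$ and $L^2$ norms of polynomials in $f(p)$ of bounded "degree" in the logarithmic scale). The number of effectively distinct blocks up to level $\log\log x\approx L$ is then only polynomial in $L$. Chebyshev with the $r$-th moment costs a factor $L^{1/r}$ in $\sum_jV_j$, which is $(\log\log x)^{1/(2r)}$ after the square root, and gives summable tails with the extra $\eta$. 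Finally, a union bound over blocks and Borel--Cantelli, combined with the interpolation step above, yield the proposition. Theorem~\ref{thm:main} follows by taking $r>1/\eps$.
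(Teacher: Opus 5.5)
Your architecture matches the paper's: test points with chaining, the largest-prime martingale controlled through its quadratic variation, Harper's low moment, and an $r$-th moment costing $L^{1/r}$. However, the step you call the main obstacle is not resolved, and as sketched it cannot be.

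\emph{An unconditional $r$-th moment is useless here.} We have $\E V_\ell(x_i;f)\ge\sum_{\sqrt{x_i}<p\le x_i}\lfloor x_i/p\rfloor\gg x_i$, and likewise $\E I_0\asymp1$ for the normalised Euler-product integral. So by Jensen $\E[(V_\ell/x_i)^r]\gg1$, and Chebyshev at level $L^{-1/2+1/r}$ gives a bound of size $\gg L^{r/2-1}$. Hypercontractivity applied to the whole Euler product is worse still: the weight $\tau_{2r-1}$ costs $\prod_{p\le y}(1-1/p)^{-(2r-1)}\asymp(\log y)^{2r-1}$ in place of $\log y$.

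\emph{The missing idea is to take the $r$-th moment conditionally.} Condition on $\CF_{y_{j-1}}$ and apply hypercontractivity only to the new primes of one block $(y_{j-1},y_j]$, where the weight is $\prod_{y_{j-1}<p\le y_j}(1-1/p)^{-(2r-1)}\ll_r1$. With Doob over the cutoffs in $\CQ_j$, Minkowski and Parseval, this gives $\E[U_j^r\mid\CF_{y_{j-1}}]\ll_rI_{j-1}^r$. One then never takes a moment of $I_{j-1}$. The tilt $(\log y_j/\log y_0)^{-1/L}$ makes $(I_j)$ a supermartingale. Doob's inequality for $(I_j^{2/3})$ and Harper's bound $\E I_0^{2/3}\ll L^{-1/3}$ then give $\P(\max_jI_j>T_1(\ell)^{1/2}L^{-1/2})\ll T_1(\ell)^{-1/3}$. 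This is summable over $\ell$: a power loss is harmless once you sum over ranges $\ell$ rather than over test points. On the stopped event, conditional Markov at threshold $C_rT_1(\ell)L^{-1/2+1/r}$ gives $T_1(\ell)^{-r/2}L^{-1}$ per block. The factor $L^{-1}$ pays for the union over $J\asymp L$ blocks. This, not a count of ``effectively distinct blocks'', is where $L^{1/r}$ comes from.

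Two further points need repair.

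First, uniformity over the $\exp(CL)$ test points in a range requires a single majorant. The paper uses blocks $y_j$ that depend only on $\ell$. Caich's smoothing inequality bounds $V_\ell(x_i;f)/x_i$ by $\ell\log\ell\sup_jM_\ell(x_i,y_j;f)$ plus auxiliary terms with summable exceptional probabilities. The bound $M_\ell(x_i,y_j;f)\le U_j$ then holds for every $x_i$ at once. Your ranges $(x^{e^{-j-1}},x^{e^{-j}}]$ move with $x$, so the pieces at different test points are not dominated by a common random variable. ``Strong correlation of nearby Euler products'' does not supply one.

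Second, the Cauchy--Schwarz step is wrong. We have $\sum_{j\le J}\sqrt{V_j\log\log x}\le\sqrt{J\log\log x\sum_jV_j}$, and $J\asymp\log\log x$ costs an extra factor $(\log\log x)^{1/2}$ in the final bound. Instead, apply the stopped Hoeffding inequality once to the whole martingale $M_f^{(1)}(x_i)=\sum_{y_0<p\le y_J}f(p)\Psi_f'(x_i/p,p)$, on the event $\{V_\ell(x_i;f)\le C_*x_iT(\ell)L^{-1/2+1/r}\}$. (In the Steinhaus case the repeated-prime term $M_f^{(2)}$ is also not automatic; it needs its own conditional $2L$-th moment argument.)
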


\begin{proof}[Proof of Theorem \ref{thm:main} assuming Proposition \ref{prop:fixed-r}]
Given $\eps>0$, choose an integer $r\ge2$ and $\eta>0$ such that
$1/(2r)+\eta<\eps$, and apply Proposition \ref{prop:fixed-r}.
\end{proof}

\begin{cor}\label{cor:almost-sure-exponent}
Let $f$ be either a Steinhaus random multiplicative function or a Rademacher random multiplicative function.  Then, almost surely,
$$
        \limsup_{x\to\infty}
        \frac{\log(1+|M_f(x)|/\sqrt{x})}
        {\log\log\log x}
        =\frac14.
$$
\end{cor}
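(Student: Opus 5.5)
We deduce Corollary \ref{cor:almost-sure-exponent} by combining Theorem \ref{thm:main}, which gives the upper bound on the $\limsup$, with Harper's lower bound \eqref{eq:harper-lower}, which gives the matching lower bound. Throughout write $L(x)=\log\log\log x$. Both $\log\log x$ and $L(x)$ tend to infinity, so additive constants and factors of fixed size become negligible once we divide by $L(x)$.

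\emph{Upper bound.} Fix $\eps>0$. By Theorem \ref{thm:main}, on an event of probability one there is a random constant $C=C(\eps,f)$ such that $|M_f(x)|\le C\sqrt{x}(\log\log x)^{1/4+\eps}$ for all large $x$. For such $x$ we have
$$
\log\bigl(1+|M_f(x)|/\sqrt{x}\bigr)\le \log\bigl(1+C(\log\log x)^{1/4+\eps}\bigr)=(1/4+\eps)L(x)+O_C(1),
$$
so the $\limsup$ is at most $1/4+\eps$. Intersect these events over $\eps=1/k$, $k\in\mathbb{N}$. This is a countable intersection, so it still has probability one, and on it the $\limsup$ is at most $1/4$.

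\emph{Lower bound.} Fix $\delta>0$ and apply \eqref{eq:harper-lower} with $V(x)=(\log\log x)^{\delta}$, which tends to infinity. Almost surely there are arbitrarily large $x$ with $|M_f(x)|/\sqrt{x}\ge(\log\log x)^{1/4-\delta}$. At those $x$ we get $\log(1+|M_f(x)|/\sqrt{x})\ge(1/4-\delta)L(x)$, and hence the $\limsup$ is at least $1/4-\delta$. Intersecting over $\delta=1/k$ gives $\limsup\ge1/4$ almost surely.

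Intersecting the two probability-one events proves the corollary. No step here is a real obstacle. The only points that need care are the countable intersections over $\eps$ and $\delta$, and the fact that the random implied constant in Theorem \ref{thm:main} is absorbed after dividing by $L(x)\to\infty$.
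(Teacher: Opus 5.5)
Your proposal is correct and follows essentially the same route as the paper: the upper bound comes from Theorem~\ref{thm:main} intersected over $\eps=1/k$, and the lower bound from Harper's bound \eqref{eq:harper-lower} with $V(x)=(\log\log x)^{\delta}$, letting $\delta\downarrow0$ through a countable sequence. You also handle the absorption of the random constant after dividing by $\log\log\log x$ correctly; the choices $\delta\ge1/4$ are trivially harmless.
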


\begin{proof}
Apply Theorem \ref{thm:main} on the probability-one intersection corresponding to $\eps=1/m$, $m\ge1$, to obtain the upper bound.  For the reverse inequality, take
$V(x)=(\log\log x)^\delta$ in \eqref{eq:harper-lower}, where
$0<\delta<1/4$.  Almost surely, along an unbounded sequence,
$$
        \frac{|M_f(x)|}{\sqrt{x}}
        \ge(\log\log x)^{1/4-\delta},
$$
and hence the limsup is at least $1/4-\delta$.  Letting $\delta\downarrow0$ through a countable sequence proves the result.
\end{proof}

\subsection*{Acknowledgements}
BD and APC thank the Heilbronn Institute for Mathematical Research for its support.  We thank Rachid Caich and Adam Harper for a careful reading of a draft of this paper and for their helpful comments.

\section{Proof outline}\label{sec:outline}
We discuss here the strategy behind the proof of Theorem \ref{thm:main}. The first reduction replaces control at every real $x$ by control at a sparse sequence of `test points' and uses short increments to interpolate between consecutive points; this is given by Lemma \ref{lem:sparse-points}.  At scale $\ell$, the relevant indices satisfy $X_{\ell-1}<x_i\le X_\ell$.  Here $J\asymp L\asymp\log\log x_i$ measures the cost of taking a union over prime blocks, while each block has reciprocal-prime mass of order $1/\ell$, keeping its Euler factors uniformly bounded.

\subsection{Largest-prime decomposition}\label{sec:outline-decomposition}

We work with a sparse sequence of test points $(x_i)$ (defined in Section~\ref{sec:sparse}) and divide the primes above a scale $y_0$ into
blocks $(y_{j-1},y_j]$.  Lemma \ref{lem:decomposition} gives the
partition
\begin{equation*}
        M_f(x_i)=\Psi_f(x_i,y_0)+M_f^{(1)}(x_i)+M_f^{(2)}(x_i).
\end{equation*}
The term $M_f^{(1)}$ contains integers whose largest prime occurs once, and
$M_f^{(2)}$ contains those whose largest prime occurs at least twice.  The
strict condition $P(m)<p$ in
$f(p)\Psi_f'(x_i/p,p)$ makes the first term a prime-indexed martingale even
when $p<\sqrt{x_i}$.  Its predictable quadratic variation is
\begin{equation*}
        V_\ell(x_i;f)=\sum_{y_0<p\le y_J}\left|\Psi_f'\left(\frac{x_i}{p},p\right)\right|^2.
\end{equation*}
The smooth term and $M_f^{(2)}(x_i)$ have summable exceptional probabilities
by Lemma \ref{lem:B0-B2}.

The remaining problem is uniform control of $V_\ell(x_i;f)$.  A direct estimate of this quadratic variation loses too much when repeated across the $J\asymp L$ blocks.  Caich's smoothing inequality, stated in Proposition
\ref{prop:caich-smoothing}, instead bounds $V_\ell(x_i;f)/x_i$ by the principal logarithmic mean square $M_\ell(x_i,y_j;f)$ together with smoothing, boundary, and short-interval errors.  Lemma
\ref{lem:lambda-two} and the proof of Proposition
\ref{prop:caich-smoothing} make all five auxiliary quantities summable.  The principal mean square is therefore the heart of the problem.

\subsection{Conditional block moments}\label{sec:outline-new}

For each $j$, a single random variable $U_j$ dominates
$M_\ell(x_i,y_j;f)$ simultaneously for every test point $x_i$ in the
$\ell$-th range.  The key new idea in this paper is in proving the estimate
$$
        \E\left[U_j^r\mid\CF_{y_{j-1}}\right]\ll_r I_{j-1}^r
        \qquad(r\ge2),
$$
which is carried out in Proposition \ref{prop:block-moment}. This crucially removes the $\sqrt{\log\log x}$ loss which is inherent in a proof reliant solely on the conditional second moment.

A slow deterministic tilt offsets the mean growth of the Euler factors and makes the normalised integrals a supermartingale through the final block, as shown in Lemma \ref{lem:supermartingale}.  Harper's $2/3$ low moment, recorded in Lemma \ref{lem:steinhaus-harper}, gives a summable bound for leaving the stopping region.

Inside that region, the realised $I_{j-1}$ is bounded before the $j$-th block is revealed, without requiring any estimate for $\E I_{j-1}^r$.  Conditional Markov then contributes the factor $L^{-1}$ at the chosen threshold.  This precisely absorbs the union over $J\ll L$ blocks and yields Proposition \ref{prop:principal}.

Finally, Proposition \ref{prop:variance} transfers the principal estimate to the quadratic variation.  Lemma \ref{lem:stopped-hoeffding} controls the prime martingale, and its exponent balances the union over test points.  Taking the square root of the scale $L^{1/2+1/r+2\eta}$ produces the exponent $1/4+1/(2r)+\eta$ in Proposition \ref{prop:fixed-r}.

 \subsection{The Rademacher case}\label{sec:outline-rademacher}
The Rademacher proof uses the same sequence of test points, prime blocks, smoothing inequality, stopping argument, and final Hoeffding estimate.  The squarefree support removes the $M_f^{(2)}$ term and replaces the prime-power increments and divisor products by their squarefree analogues; the smoothing transfer is formalised in Proposition \ref{prop:rademacher-smoothing}.  The principal point at which the Steinhaus proof does not transfer directly is Lemma \ref{lem:steinhaus-harper}: Rademacher signs are not rotationally invariant.  A shifted unit-interval estimate from Harper is combined with the decay of $|1/2+it|^{-2}$ to obtain the full-line estimate of Lemma \ref{lem:rademacher-low-moment}.  Lemma \ref{lem:rademacher-supermartingale} gives the model-specific supermartingale, and Proposition \ref{prop:rademacher-block-moment} gives the conditional block estimate with local factors $1+1/p$ and $1+(2r-1)/p$, respectively.

\subsection{Relation to Caich's argument}
We keep Caich's sequence of test points, largest-prime decomposition,
smoothing inequality, and stopped Hoeffding step.  The key new ingredient in our proof is
Proposition~\ref{prop:block-moment}, which replaces the conditional
first-moment step in Caich's Section~7.3 by a fixed conditional $r$th-moment
estimate on one prime block.  Caich's conditional first-moment estimate is summed over $J\asymp L$ blocks, whereas the fixed conditional $r$th moment produces the compensating factor $L^{-1}$ after stopping.  This is the precise point at which the new logarithmic saving enters; the decomposition, smoothing, and final martingale step retain Caich's architecture.  In the Rademacher model, the additional
model-specific ingredient is the shifted low-moment estimate of
Lemma~\ref{lem:harper-shifted-rademacher}.

\section{Preliminaries}\label{sec:prelim}

Throughout Sections \ref{sec:prelim}--\ref{sec:fixedr}, $f$ denotes a Steinhaus random multiplicative function.  All logarithms are natural.  Sums over $n\le x$ are understood to be empty when $x<1$.  For $y\ge2$, let $\CF_y$ be the $\sigma$-algebra generated by $\{f(p):p\le y\}$.  We write $P(n)$ for the largest prime factor of $n$, with the convention $P(1)=1$.  For a prime $p$, let $v_p(n)=\max\{a\ge0:p^a\mid n\}$, and let $\omega(n)$ denote the number of distinct prime factors of $n$.  Finally, $\tau_k(n)$ denotes the number of ordered $k$-tuples of positive integers with product $n$, and we use the Chebyshev functions
$$
        \psi(x)=\sum_{n\le x}\Lambda(n),
        \qquad
        \vartheta(x)=\sum_{p\le x}\log p.
$$

Fix an integer $r\ge2$ and $\eta>0$.  Choose an integer $K\ge100$ such that
\begin{equation}\label{eq:K-condition}
        2K\eta>10.
\end{equation}
A sparseness parameter $0<c_0\le10^{-3}$ will be chosen in Lemma
\ref{lem:sparse-points}, and a fixed integer $m_0>1/c_0$ will be used in the smoothing estimate.  The growing moment used only for the repeated-prime contribution is $m=L=\ell^K$.

\subsection{Probabilistic tools}\label{sec:prob-tools}
We begin with the moment inequality that replaces orthogonality when moments higher than the second are needed.  For Steinhaus random multiplicative functions it takes a clean form: the $2m$-th moment of a linear combination $\sum_n a_nf(n)$ is bounded by the $m$-th power of a weighted $\ell^2$-norm of the coefficients, with weights given by divisor functions.  The underlying product-space inequality goes back to Bonami \cite{Bonami}; the multiplicative form below is stated by Caich \cite{Caich}.  It will be used both directly, for the increments in Lemma \ref{lem:sparse-points}, and conditionally, in Proposition \ref{prop:block-moment}, on the product space of one block of new prime coordinates.

\begin{lem}\label{lem:hypercontractive}
Let $f$ be a Steinhaus random multiplicative function.  For every sequence $(a_n)$ of complex numbers and every positive integer $m$,
$$
        \E\left|\sum_n a_nf(n)\right|^{2m}
        \le
        \left(\sum_n|a_n|^2\tau_{2m-1}(n)\right)^m,
$$
whenever the right-hand side is finite.
\end{lem}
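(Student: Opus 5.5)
We reduce the statement to the classical Bonami hypercontractive inequality on a product of circles, applied one prime at a time. Suppose first that only finitely many $a_n$ are nonzero; the general case then follows from Fatou's lemma applied to truncations $\sum_{n\le N}a_nf(n)$, using that the right-hand side is finite and monotone in the truncation. In the finite case only finitely many primes $p_1,\dots,p_k$ are involved. Writing $z_i=f(p_i)$, which are independent and uniform on $\T$, the sum becomes a polynomial
$$
F(z_1,\dots,z_k)=\sum_{\mathbf a}c_{\mathbf a}\,z_1^{a_1}\cdots z_k^{a_k},
\qquad c_{\mathbf a}=a_{p_1^{a_1}\cdots p_k^{a_k}},
$$
with nonnegative exponents. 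For a prime power, $\tau_{2m-1}(p^a)=\binom{a+2m-2}{2m-2}$, and $\tau_{2m-1}$ is multiplicative. The target inequality is therefore
$$
\|F\|_{2m}^2\le\sum_{\mathbf a}|c_{\mathbf a}|^2\prod_i\binom{a_i+2m-2}{2m-2}.
$$

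\textbf{One variable.} For $g(z)=\sum_{a\ge0}c_az^a$, expand $g^m=\sum_N\big(\sum_{a_1+\dots+a_m=N}c_{a_1}\cdots c_{a_m}\big)z^N$. By Parseval, $\E|g|^{2m}=\sum_N\big|\sum c_{a_1}\cdots c_{a_m}\big|^2$.

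Apply Cauchy--Schwarz to each inner sum with weights $w(a)=\binom{a+2m-2}{2m-2}$. This gives
$$
\Big|\sum_{a_1+\dots+a_m=N} c_{a_1}\cdots c_{a_m}\Big|^2\le\Big(\sum_{a_1+\dots+a_m=N}\prod_j |c_{a_j}|^2w(a_j)\Big)\Big(\sum_{a_1+\dots+a_m=N}\prod_j w(a_j)^{-1}\Big).
$$
The obstacle is the second factor, which must be at most $1$. I expect this estimate to be the main difficulty.

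To handle it, use the generating-function identity $\sum_{a}w(a)x^a=(1-x)^{-(2m-1)}$. Then note that $w(a_1)\cdots w(a_m)\ge\binom{N+2m-2}{2m-2}$-type lower bounds are not directly available, so the cleaner route is to bound the product from below. One can use the log-convexity of $w$, or simply the fact that $\prod_j w(a_j)$ counts tuples refining a composition of $N$ into $m(2m-1)$ parts, to show $\sum_{a_1+\dots+a_m=N}\prod w(a_j)^{-1}\le1$.

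If this proves awkward, the standard alternative is to iterate the genuinely sharp two-point hypercontractivity instead. Weissler's theorem says the Poisson semigroup $P_\rho$ on $\T$ is contractive $L^2\to L^{2m}$ for $\rho\le(2m-1)^{-1/2}$. This gives $\|g\|_{2m}^2\le\sum_a|c_a|^2(2m-1)^a$, and $(2m-1)^a\ge\binom{a+2m-2}{2m-2}$ fails in general. So the first, combinatorial route with weights $\tau_{2m-1}$ is the one that matches the statement, and I would commit to it. A sanity check is that it is exactly the classical bound $\E|\sum a_nf(n)|^{2m}\le(\sum|a_n|^2\tau_m(n))^m$-type inequalities obtained by expanding $|\cdot|^{2m}$ and using orthogonality \eqref{eq:orthogonality}.

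A more direct approach, which avoids induction altogether, is as follows. Expand $\big(\sum a_nf(n)\big)^m=\sum_N b_Nf(N)$ with $b_N=\sum_{n_1\cdots n_m=N}a_{n_1}\cdots a_{n_m}$. Then $\E|\cdot|^{2m}=\sum_N|b_N|^2$ by \eqref{eq:orthogonality}. Cauchy--Schwarz gives $|b_N|^2\le\tau_m(N)\sum_{n_1\cdots n_m=N}|a_{n_1}\cdots a_{n_m}|^2$. Summing over $N$ gives at most $\sum_{n_1,\dots,n_m}\prod|a_{n_j}|^2\,\tau_m(n_1\cdots n_m)$. Then use $\tau_m(n_1\cdots n_m)\le\prod_j\tau_m(n_j)^{?}$, and more carefully the submultiplicativity $\tau_m(n_1\cdots n_m)\le\prod_j\tau_{2m-1}(n_j)$, which can be checked prime by prime. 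This reduces to $\binom{b_1+\dots+b_m+m-1}{m-1}\le\prod_j\binom{b_j+2m-2}{2m-2}$, an elementary inequality provable by induction on $\sum b_j$. The result is $\big(\sum|a_n|^2\tau_{2m-1}(n)\big)^m$. I would present this route, with the prime-power binomial inequality as the key step.
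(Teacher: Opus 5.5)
Your final route is correct. It differs from the paper's proof, which simply quotes Caich's Lemma~3.1, itself a consequence of Bonami's hypercontractive inequality on the product of circles indexed by the primes.

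Your argument runs as follows:
\begin{itemize}
\item complete multiplicativity gives $(\sum_n a_nf(n))^m=\sum_N b_Nf(N)$;
\item orthogonality gives $\E|\sum_n a_nf(n)|^{2m}=\sum_N|b_N|^2$;
\item Cauchy--Schwarz uses the exact count $\tau_m(N)$ of ordered factorisations;
\item the prime-power inequality $\binom{B+m-1}{m-1}\le\prod_j\binom{b_j+2m-2}{2m-2}$ finishes the proof.
\end{itemize}

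That last inequality does hold, and your induction works. Raising one $b_i$ by one multiplies the left side by $1+\frac{m-1}{B+1}$ and the right side by $1+\frac{2m-2}{b_i+1}$, and $b_i\le B$. A more transparent argument: $\tau_m$ is submultiplicative, because adding compositions of $b$ and $b'$ coordinatewise maps onto compositions of $b+b'$. Also $\tau_m\le\tau_{2m-1}$ pointwise. So your argument actually proves the sharper bound with $\tau_m(n)$ in place of $\tau_{2m-1}(n)$.

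\textbf{What your approach buys.} It is a short, self-contained proof with a better weight. It covers every use of the lemma in the Steinhaus sections, all of which have integer $m$. The conditional block applications go through the same way: after conditioning on $\CF_{y_{j-1}}$ the coefficients are frozen, and the values $f(d)$ at integers $d$ built from block primes are again orthonormal.

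\textbf{What the Bonami route buys.} It handles non-integer exponents, and it transfers to Rademacher signs. There $f(p)^2=1\ne f(p^2)$ destroys the identity $f(n_1)\cdots f(n_m)=f(n_1\cdots n_m)$, so your counting step has no analogue. This is why the paper invokes Bonami directly in the Rademacher case.

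When writing it up, discard the exploratory material.
\begin{itemize}
\item The one-variable route never explains how to tensorise over primes.
\item The Weissler remark is backwards. The inequality $(2m-1)^a\ge\binom{a+2m-2}{2m-2}$ always holds; the point is that the Weissler weights are larger, so that bound is weaker than the lemma.
\item For infinitely many nonzero $a_n$, say explicitly that finiteness of the right-hand side forces $\sum_n|a_n|^2<\infty$. Hence the partial sums converge in $L^2$, and almost surely along a subsequence, before you apply Fatou.
\end{itemize}
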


\begin{proof}
This is Caich \cite[Lemma 3.1]{Caich}, obtained from the product-space hypercontractive inequality of Bonami \cite{Bonami}.
\end{proof}
We shall use Parseval's identity for Dirichlet series in the following form.
\begin{lem}\label{lem:parseval-dirichlet}
Let
$$
        A(s)=\sum_{n\ge1}\frac{a_n}{n^s}
$$
be a Dirichlet series with abscissa of convergence $\sigma_c$.  For every
$\sigma>\max(0,\sigma_c)$,
$$
        \int_0^\infty
        \left|\sum_{n\le x}a_n\right|^2\frac{\dee x}{x^{1+2\sigma}}
        =
        \frac1{2\pi}\int_{-\infty}^{\infty}
        \left|\frac{A(\sigma+it)}{\sigma+it}\right|^2\dee t.
$$
\end{lem}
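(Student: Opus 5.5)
The identity is the Plancherel theorem for the Mellin transform, applied to the partial-sum function $S(x)=\sum_{n\le x}a_n$, which vanishes for $x<1$. The plan is to express $A(s)/s$ as a Mellin transform of $S$, convert it into a Fourier transform, and invoke $L^2$ Plancherel.

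\emph{Step 1: the Mellin representation.} Fix $\sigma>\max(0,\sigma_c)$. Since $\sigma>\sigma_c$, partial summation gives the standard bound $S(x)\ll_\delta x^{\sigma_c+\delta}$ when $\sigma_c\ge0$, and $S(x)=O(1)$ otherwise. For every $\delta>0$, $S(x)\ll_\delta x^{\max(0,\sigma_c)+\delta}$, so $S(x)=O(x^{\sigma-\delta'})$ for some $\delta'>0$. With this bound, Abel summation yields, for $\Re s=\sigma$,
$$
A(s)=\sum_n a_n n^{-s}=s\int_1^\infty S(x)x^{-s-1}\,\dee x .
$$
The boundary term $S(X)X^{-s}$ tends to $0$. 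The integral converges absolutely because $|S(x)x^{-s-1}|\ll x^{-1-\delta'}$. Hence, for every real $t$,
$$
\frac{A(\sigma+it)}{\sigma+it}=\int_1^\infty S(x)x^{-\sigma-1}x^{-it}\,\dee x .
$$

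\emph{Step 2: change of variables.} Put $x=e^u$ and $g(u)=S(e^u)e^{-\sigma u}$ for $u\ge0$, with $g(u)=0$ for $u<0$. Then
$$
\frac{A(\sigma+it)}{\sigma+it}=\int_{\R} g(u)e^{-itu}\,\dee u=\hat g(t).
$$
Here $|g(u)|\ll e^{-\delta' u}$, so $g\in L^1(\R)\cap L^2(\R)$.

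\emph{Step 3: Plancherel.} Plancherel gives $\int|\hat g|^2\,\dee t=2\pi\int|g|^2\,\dee u$. Undoing the substitution,
$$
\int_{\R}|g(u)|^2\,\dee u=\int_1^\infty|S(x)|^2x^{-2\sigma}\,\frac{\dee x}{x},
$$
which equals the left-hand side of the lemma since $S=0$ on $(0,1)$. Dividing by $2\pi$ gives the claimed identity. Both sides are finite by the decay of $g$.

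\emph{Main obstacle.} The only delicate point is the growth bound $S(x)=O(x^{\sigma-\delta'})$ from $\sigma>\sigma_c$, since the series need not converge absolutely. I would derive it by writing $a_n=n^{s_0}\cdot a_n n^{-s_0}$ with real $s_0\in(\sigma_c,\sigma)$, $s_0\ge0$, and using partial summation against the bounded partial sums of the convergent series $\sum a_n n^{-s_0}$. This gives $S(x)\ll x^{s_0}$.
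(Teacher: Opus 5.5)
Your proof is correct: the bound $S(x)\ll x^{s_0}$ for real $s_0\ge0$ with $\sigma_c<s_0<\sigma$ gives the absolutely convergent representation $A(s)/s=\int_1^\infty S(x)x^{-s-1}\,\dee x$ on $\mathrm{Re}(s)=\sigma$, and Plancherel after the substitution $x=e^u$ then yields the identity with the right constant. The paper gives no argument of its own and simply cites Montgomery--Vaughan (5.26), which is proved by this same Mellin--Plancherel argument.
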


\begin{proof}
See Montgomery--Vaughan \cite[(5.26)]{MV}.
\end{proof}

We use both of Doob's maximal inequalities.

\begin{lem}\label{lem:doob-inequalities}
Let $(X_j)_{0\le j\le J}$ be a nonnegative submartingale.  If $p>1$, then
$$
        \E\max_{0\le j\le J}X_j^p
        \le\left(\frac{p}{p-1}\right)^p\E X_J^p.
$$
If $(Y_j)_{0\le j\le J}$ is a nonnegative supermartingale, then, for every $u>0$,
$$
        \P\left(\max_{0\le j\le J}Y_j>u\right)
        \le\frac{\E Y_0}{u}.
$$
\end{lem}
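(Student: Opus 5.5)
The statement in question is Lemma \ref{lem:doob-inequalities}, which collects two standard facts from martingale theory. I would prove them directly rather than cite them.

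\textbf{Doob's $L^p$ inequality.} Let $X^*=\max_{0\le j\le J}X_j$.

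First, establish the weak-type estimate
$$
\lambda\,\P(X^*\ge\lambda)\le \E\bigl[X_J\ind_{X^*\ge\lambda}\bigr],\qquad \lambda>0.
$$
Let $\tau$ be the first index $j$ with $X_j\ge\lambda$. On the event $\{\tau=j\}$, which lies in $\CF_j$, the submartingale property gives
$$
\E\bigl[X_J\ind_{\tau=j}\bigr]\ge \E\bigl[X_j\ind_{\tau=j}\bigr]\ge \lambda\,\P(\tau=j).
$$
Summing over $j$ gives the estimate.

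Next, write
$$
\E (X^*)^p=\int_0^\infty p\lambda^{p-1}\P(X^*\ge\lambda)\,\dee\lambda .
$$
Insert the weak-type bound and apply Fubini. This yields
$$
\E (X^*)^p\le \frac{p}{p-1}\,\E\bigl[X_J (X^*)^{p-1}\bigr].
$$
Hölder's inequality with exponents $p$ and $p/(p-1)$ then gives
$$
\E (X^*)^p\le \frac{p}{p-1}\,\|X_J\|_p\,\|X^*\|_p^{p-1}.
$$

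The only delicate point is dividing by $\|X^*\|_p^{p-1}$, which requires this quantity to be finite. Here this is automatic: $J$ is finite and $X^*\le \sum_j X_j$, so we may assume $\E X_J^p<\infty$ (otherwise there is nothing to prove). Then each $\E X_j^p\le \E X_J^p<\infty$ by conditional Jensen applied to the convex increasing map $x\mapsto x^p$ on $[0,\infty)$. Dividing and raising to the $p$-th power gives the constant $(p/(p-1))^p$.

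\textbf{Maximal inequality for supermartingales.} Set $\sigma=\min\{j:Y_j>u\}\wedge J$. This is a bounded stopping time, so optional stopping for supermartingales gives $\E Y_\sigma\le \E Y_0$.

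On the event $\{\max_j Y_j>u\}$ we have $Y_\sigma>u$. Since $Y$ is nonnegative,
$$
u\,\P\Bigl(\max_{0\le j\le J} Y_j>u\Bigr)\le \E Y_\sigma\le \E Y_0 .
$$

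No step here is a real obstacle. The only care needed is the finiteness issue in the $L^p$ bound, handled as above by the finite horizon and Jensen. Alternatively, both statements may be cited from any standard probability text (e.g.\ Williams, \emph{Probability with Martingales}, Theorems 14.6 and 14.11).
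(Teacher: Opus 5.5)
Your proof is correct: both parts are the standard textbook arguments. For the $L^p$ bound you use the first-passage decomposition, then the layer-cake formula and H\"older; for the supermartingale bound you use optional stopping at the bounded time $\sigma$. Your handling of the finiteness of $\|X^*\|_p$ via the finite horizon and conditional Jensen is exactly what is needed. The paper gives no argument of its own and simply cites Caich's Lemmas 3.8 and 3.9, which are these same standard Doob inequalities, so your self-contained proof supplies what that citation defers to.
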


\begin{proof}
See Caich \cite[Lemmas 3.8 and 3.9]{Caich}.
\end{proof}
Finally we will use the following variant of the Azuma--Hoeffding  inequality.

\begin{lem}\label{lem:stopped-hoeffding}
Let $(Z_n)_{1\le n\le N}$ be a complex martingale difference sequence with respect to $(\CG_n)_{0\le n\le N}$.  Suppose that the $Z_n$ are bounded and
$|Z_n|\le S_n$, where $S_n$ is nonnegative and $\CG_{n-1}$-measurable.  For $T>0$, let
$$
        \Sigma(T)=\left\{\sum_{n\le N}S_n^2\le T\right\}.
$$
Then, for every $u>0$,
$$
        \P\left(
        \left\{\left|\sum_{n\le N}Z_n\right|\ge u\right\}
        \cap\Sigma(T)\right)
        \le2\exp\left(-\frac{u^2}{10T}\right).
$$
\end{lem}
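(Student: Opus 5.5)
The plan is the standard exponential supermartingale argument (Freedman/Azuma style), adapted to predictable bounds $S_n$ and to complex values. First I reduce to the real case. Write $Z_n=X_n+iY_n$. Then $(X_n)$ and $(Y_n)$ are real martingale difference sequences with $|X_n|,|Y_n|\le S_n$. If $|\sum Z_n|\ge u$, then either $|\sum X_n|\ge u/\sqrt2$ or $|\sum Y_n|\ge u/\sqrt2$. So it suffices to prove, for a real martingale difference sequence with $|X_n|\le S_n$,
$$
\P\left(\left\{\sum_{n\le N}X_n\ge v\right\}\cap\Sigma(T)\right)\le\frac12\exp\left(-\frac{v^2}{5T}\right),
\qquad v=\frac{u}{\sqrt2},
$$
together with the same bound for $-X_n$. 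Summing the four one-sided bounds gives $2\exp(-u^2/(10T))$, since $v^2/5=u^2/10$.

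For the real case, fix $\lambda>0$ and use the conditional Hoeffding lemma. Since $X_n$ is conditionally centred and supported in $[-S_n,S_n]$ with $S_n$ being $\CG_{n-1}$-measurable,
$$
\E\left[e^{\lambda X_n}\mid\CG_{n-1}\right]\le e^{\lambda^2S_n^2/2}.
$$
(Boundedness of $Z_n$ guarantees all these expectations are finite.) Hence
$$
W_k=\exp\left(\lambda\sum_{n\le k}X_n-\frac{\lambda^2}{2}\sum_{n\le k}S_n^2\right)
$$
is a nonnegative supermartingale with $W_0=1$, so $\E W_N\le1$. On the event $\{\sum X_n\ge v\}\cap\Sigma(T)$ we have $W_N\ge e^{\lambda v-\lambda^2T/2}$. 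Markov's inequality then gives probability at most $e^{-\lambda v+\lambda^2T/2}$, and choosing $\lambda=v/T$ gives $e^{-v^2/(2T)}$.

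It remains to check the constant: I need $e^{-v^2/(2T)}\le\frac12e^{-v^2/(5T)}$, which requires $v^2/T\ge(10/3)\log2$. In the complementary range $u^2/(10T)<\tfrac13\log 2$, so $2e^{-u^2/(10T)}>2\cdot2^{-1/3}>1$ and the claimed bound is trivial. Thus both ranges are covered.

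I expect no real obstacle. The only points needing care are that the stopping event $\Sigma(T)$ is not adapted, which is handled because it enters only through the final value $W_N$ and not through any stopping time, and the bookkeeping of constants in the complex reduction. If one prefers, stopping at $\tau=\min\{k:\sum_{n\le k+1}S_n^2>T\}$ (which is a stopping time, by predictability) gives the same bound with a supermartingale whose exponent is deterministically bounded.
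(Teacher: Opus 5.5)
Your argument is correct and is essentially the proof the paper cites from Caich. You split $Z_n$ into real and imaginary parts and use the exponential supermartingale $\exp\bigl(\lambda\sum X_n-\frac{\lambda^2}{2}\sum S_n^2\bigr)$ coming from the conditional Hoeffding bound. Evaluating $W_N$ on $\Sigma(T)$, instead of stopping when $\sum S_n^2$ first exceeds $T$, is an inessential variant; it is valid because $\Sigma(T)$ is used only at the final time.

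There is one arithmetic slip in the constant check. The complementary range $v^2/T<\frac{10}{3}\log2$ is $u^2/(10T)<\frac23\log2$, not $\frac13\log2$. As written, your two cases miss the interval $\frac13\log2\le u^2/(10T)<\frac23\log2$. On the correct range one still has $2e^{-u^2/(10T)}>2^{1/3}>1$, so the bound is trivial there and the case split closes.
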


\begin{proof}
See Caich \cite[Lemma 3.12]{Caich}.  The proof applies the exponential-moment argument separately to the real and imaginary parts and stops when the accumulated predictable bound first exceeds $T$.
\end{proof}

\subsection{Number-theoretic estimates}\label{sec:nt-tools}
We need the following consequence of Shiu's theorem for divisor sums in short intervals.  Its uniformity for interval length at least $x^\beta$ is used in Lemma \ref{lem:sparse-points}.
\begin{lem}\label{lem:shiu-divisor}
Let $k\ge1$ be fixed and let $0<\beta<1$.  Uniformly for large $x$ and
$x^\beta\le h\le x$,
$$
        \sum_{x<n\le x+h}\tau_k(n)
        \ll_{k,\beta}h(\log x)^{k-1}.
$$
\end{lem}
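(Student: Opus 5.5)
This is Shiu's theorem for the submultiplicative function $\tau_k$; the plan is to verify Shiu's hypotheses and then evaluate the resulting main term. Shiu's Brun--Titchmarsh theorem for multiplicative functions states the following. Let $g\ge0$ be multiplicative with $g(p^a)\le A^a$ for all prime powers, and $g(n)\ll_\epsilon n^\epsilon$ for every $\epsilon>0$. Then, uniformly for $x^\beta\le h\le x$,
$$
        \sum_{x<n\le x+h}g(n)\ll_{A,\beta,\epsilon}\frac{h}{\log x}\exp\left(\sum_{p\le x}\frac{g(p)}{p}\right).
$$
(The statement is for arithmetic progressions; we take modulus $1$.) We apply this with $g=\tau_k$.

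First, $\tau_k$ is multiplicative. Next, $\tau_k(p^a)=\binom{a+k-1}{k-1}\le k^a$, since the number of multisets of size $a$ from $k$ types is at most $k^a$. So the hypothesis holds with $A=k$. The standard divisor bound $\tau_k(n)\le\tau(n)^{k-1}\ll_{\epsilon,k}n^\epsilon$ gives the second hypothesis. Both constants depend only on $k$.

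It remains to evaluate the main term. Since $\tau_k(p)=k$, Mertens' theorem gives
$$
        \sum_{p\le x}\frac{k}{p}=k\log\log x+O_k(1),
$$
so the exponential is $\ll_k(\log x)^k$. Multiplying by $h/\log x$ gives $h(\log x)^{k-1}$, uniformly for $x^\beta\le h\le x$ and large $x$, with implied constant depending only on $k$ and $\beta$. The case $k=1$ is trivial, since then $\tau_1\equiv1$.

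No step here is a real obstacle. The only point that needs care is making sure the version of Shiu's theorem quoted allows $h$ as small as $x^\beta$ with constant depending on $\beta$, which it does. Shiu's condition is $h\ge x^\beta$ with constants depending on $\beta$, and the logarithm of $h$ is comparable to $\log x$ up to factors depending on $\beta$.
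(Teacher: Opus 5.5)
Your proposal is correct and follows the paper's proof essentially verbatim: you apply Shiu's theorem with modulus $1$ and $g=\tau_k$, verify its hypotheses via $\tau_k(p^a)=\binom{a+k-1}{k-1}\le k^a$ and $\tau_k(n)\ll_{k,\epsilon}n^\epsilon$, and then use Mertens' theorem to turn $\frac{h}{\log x}\exp\bigl(k\sum_{p\le x}1/p\bigr)$ into $h(\log x)^{k-1}$. The only cosmetic difference is that Shiu states his theorem for intervals $(x-y,x]$ with $0<\beta<1/2$. You reach that form by applying it at $x+h$ with a slightly smaller $\beta$, which the paper does implicitly through its $\log(x+h)$ and $p\le x+h$.
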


\begin{proof}
Apply Shiu's Brun--Titchmarsh theorem for multiplicative functions
\cite{Shiu} with modulus $1$ and $g=\tau_k$.  Its growth hypotheses follow from the estimates
$$
        \tau_k(p^\nu)=\binom{\nu+k-1}{k-1}\le k^\nu,
        \qquad
        \tau_k(n)\ll_{k,\delta}n^\delta
$$
for every $\delta>0$.  Shiu's estimate and Mertens' theorem give
$$
\begin{aligned}
        \sum_{x<n\le x+h}\tau_k(n)
        &\ll_{k,\beta}\frac{h}{\log(x+h)}
        \exp\left(k\sum_{p\le x+h}\frac1p\right)\\
        &\ll_{k,\beta}h(\log x)^{k-1}.
\end{aligned}
$$
The constants are uniform throughout the stated interval range.
\end{proof}

We shall use Mertens' theorem with an error smaller than any power of
$1/\log u$.  In Lemma \ref{lem:supermartingale} the required error is
$o(1/(\ell L))$, while Lemma \ref{lem:short-reciprocal-primes} applies the
same estimate on the intervals $(v/(1+1/X),v]$.

\begin{lem}\label{lem:quantitative-mertens}
There are absolute constants $B_1\in\R$ and $c>0$ such that, uniformly for $u\ge3$,
\begin{equation}\label{eq:quantitative-mertens}
        \sum_{p\le u}\frac1p
        =\log\log u+B_1+
        O\left(\exp(-c\sqrt{\log u})\right).
\end{equation}
\end{lem}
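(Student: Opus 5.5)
The plan is to deduce \eqref{eq:quantitative-mertens} from the prime number theorem with the classical de la Vallée Poussin error term, transferred to reciprocal primes by partial summation.

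First, recall that there is an absolute constant $c_1>0$ such that
$$
        \vartheta(t)=t+E(t),
        \qquad
        E(t)\ll t\exp(-c_1\sqrt{\log t})
        \qquad(t\ge2).
$$
This follows from $\psi(t)=t+O(t\exp(-c_1\sqrt{\log t}))$, since $\psi(t)-\vartheta(t)\ll\sqrt{t}\log t$.

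Next, write
$$
        \sum_{p\le u}\frac1p=\int_{2^-}^{u}\frac{\dee\vartheta(t)}{t\log t}
$$
and integrate by parts with $\vartheta(t)=t+E(t)$. The main part is
$$
        \int_2^u\frac{\dee t}{t\log t}=\log\log u-\log\log2.
$$
The error part equals
$$
        \frac{E(u)}{u\log u}
        +\int_2^u E(t)\,\frac{1+\log t}{t^2\log^2 t}\,\dee t
$$
plus a constant coming from the lower endpoint. The boundary term is $\ll\exp(-c_1\sqrt{\log u})$. The integrand is $\ll\exp(-c_1\sqrt{\log t})/(t\log t)$, so it is absolutely integrable on $[2,\infty)$. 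Hence we may write $\int_2^u=\int_2^\infty-\int_u^\infty$.

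It remains to bound the tail. The substitution $s=\sqrt{\log t}$ gives
$$
        \int_u^\infty\frac{\exp(-c_1\sqrt{\log t})}{t\log t}\,\dee t
        =\int_{\sqrt{\log u}}^\infty\frac{2e^{-c_1 s}}{s}\,\dee s
        \ll\exp(-c_1\sqrt{\log u}).
$$
Collect all the constants into $B_1$, namely $-\log\log2$, the lower-endpoint term, and $\int_2^\infty$. This yields \eqref{eq:quantitative-mertens} for any $c<c_1$, and in fact with $c=c_1$. Uniformity for $u\ge3$ is automatic, since every bound above is absolute.

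The only nontrivial input is the prime number theorem with error term, which I would cite (for example from Montgomery--Vaughan \cite{MV}) rather than reprove. The rest is routine partial summation. The main care needed is in identifying the constant $B_1$ consistently, although its value (the Meissel--Mertens constant) is irrelevant for the later applications, which only use differences of \eqref{eq:quantitative-mertens} over short ranges.
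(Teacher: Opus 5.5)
Your proposal is correct and takes essentially the paper's route: the de la Vallée Poussin error term for $\psi$ (the paper cites Montgomery--Vaughan), transfer to $\vartheta$ by discarding prime powers, then partial summation. You also supply the details the paper leaves implicit, namely the boundary terms, the absolutely convergent tail, and the substitution $s=\sqrt{\log t}$, and these are all handled correctly.
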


\begin{proof}
Montgomery--Vaughan \cite[Theorem 6.9]{MV} give
$$
        \psi(u)=u+O\left(u\exp(-c_1\sqrt{\log u})\right).
$$
Removing the prime-power contribution gives the same form for $\vartheta(u)$, and partial summation yields \eqref{eq:quantitative-mertens}.
\end{proof}

The smooth-number estimate needed below is given with uniform dependence.

\begin{lem}\label{lem:smooth-number-rankin}
For $z\ge y\ge3$, let
$$
        \Psi(z,y)=\#\{n\le z:P(n)\le y\},
        \qquad
        u=\frac{\log z}{\log y}.
$$
There is an absolute constant $C>0$ such that, whenever $u\ge3$ and
$\log u\le\frac12\log y$,
\begin{equation}\label{eq:smooth-number-rankin}
        \Psi(z,y)
        \le z\exp\left(
        -\frac12u\log u
        +C\log\log y
        +C\frac{\sqrt{u}}{\log(2u)}
        \right).
\end{equation}
Consequently, if $u=\log\log z$ and $y=z^{1/u}$, then
\begin{equation}\label{eq:smooth-number-consequence}
        \Psi(z,y)
        \ll z(\log z)^{-c\log\log\log z}
\end{equation}
for some absolute $c>0$.
\end{lem}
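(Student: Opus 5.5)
This is Rankin's method. For any $0<\sigma<1$,
$$\Psi(z,y)\le\sum_{P(n)\le y}(z/n)^{\sigma}=z^{\sigma}\prod_{p\le y}(1-p^{-\sigma})^{-1}=z\exp\Bigl(-(1-\sigma)\log z+\sum_{p\le y}-\log(1-p^{-\sigma})\Bigr).$$
We take $1-\sigma=\delta=\frac{\log u}{\log y}\cdot\frac12$, or $\delta=\log(u\log u)/\log y$ as in the standard treatment, and choose whichever makes the bookkeeping cleanest. We want the saving $\delta\log z=\delta u\log y$ to beat the Euler product growth by the factor $\frac12 u\log u$. With $\delta=\frac{\log u}{\log y}$, the saving is $u\log u$. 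The hypothesis $\log u\le\frac12\log y$ gives $\delta\le\frac12$, so $\sigma\ge\frac12$ and $p^{-\sigma}\le p^{-1/2}\le 3^{-1/2}$. Hence $-\log(1-p^{-\sigma})=p^{-\sigma}+O(p^{-2\sigma})$, and the error sums to $O(\sum_p p^{-1}\cdot p^{-(2\sigma-1)})$. That error needs care when $\sigma=1/2$ exactly; to keep $2\sigma>1$ uniformly, use $\delta=\frac{\log u}{2\log y}\le\frac14$ instead. This yields a saving of $\frac12u\log u$, matching the stated constant, and gives $\sigma\ge 3/4$, so the error is $O(1)$.

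The main step is bounding $\sum_{p\le y}p^{-\sigma}=\sum_{p\le y}p^{-1}e^{\delta\log p}$. Split at $p\le e^{1/\delta}$ and $e^{1/\delta}<p\le y$. For the first range, $e^{\delta\log p}\le 1+O(\delta\log p)$, giving $\log\log y+O(1)+O(\delta\sum_{p\le e^{1/\delta}}\log p/p)=\log\log y+O(1)$ by Mertens. For the second range, partial summation with the prime number theorem (Lemma \ref{lem:quantitative-mertens}, or just Chebyshev) gives
$$\sum_{e^{1/\delta}<p\le y}\frac{p^{\delta}}{p}\ll\int_{1/\delta}^{\log y}\frac{e^{\delta t}}{t}\,\dee t\ll\frac{e^{\delta\log y}}{\delta\log y}+O(1).$$
Here $e^{\delta\log y}=\sqrt u$ and $\delta\log y=\frac12\log u$, which gives $\ll\sqrt u/\log(2u)$ (using $u\ge3$). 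Assembling,
$$\Psi(z,y)\le z\exp\Bigl(-\tfrac12u\log u+\log\log y+C\tfrac{\sqrt u}{\log 2u}+O(1)\Bigr).$$
Absorbing the $O(1)$ into $C\log\log y$ (valid since $y\ge3$ gives $\log\log y>0.09$) proves \eqref{eq:smooth-number-rankin}. I expect this partial-summation bound in the upper range to be the main point needing care, mostly tracking uniformity of constants when $\delta\log y$ is small. Since $u\ge3$, $\delta\log y\ge\frac12\log3$ is bounded below, so no degeneracy arises.

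For \eqref{eq:smooth-number-consequence}, set $u=\log\log z$ and $\log y=\log z/\log\log z$. Then $\log u=\log\log\log z\le\frac12\log y$ for large $z$, and $u\ge3$ for large $z$; small $z$ is absorbed into the implied constant. The main term is $-\frac12\log\log z\cdot\log\log\log z=-\frac12\log\log\log z\cdot\log(\log z)$. The other terms are $C\log\log y\le C\log\log z$ and $C\sqrt{\log\log z}$, and both are $o(\log\log z\cdot\log\log\log z)$. Hence the exponent is at most $-c\log\log\log z\cdot\log\log z$ for, say, $c=1/4$ and large $z$, which is exactly $(\log z)^{-c\log\log\log z}$.
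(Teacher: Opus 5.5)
Your proposal is correct and is essentially the paper's argument: Rankin's inequality with $\sigma=1-\delta$, $\delta=\frac{\log u}{2\log y}\le\frac14$, so that $z^{\sigma}=z\exp(-\frac12u\log u)$. The Euler product is bounded by $\sum_{p\le y}p^{-1+\delta}+O(1)$, and partial summation with Chebyshev's bound gives $\ll\log\log y+\frac{y^{\delta}}{\delta\log y}$, where $y^{\delta}=\sqrt{u}$ and $\delta\log y=\frac12\log u$. Your split of the prime sum at $e^{1/\delta}$ (the upper range is empty when $e^{1/\delta}>y$) only spells out the partial summation the paper states in one line, and your deduction of the consequence matches the paper's.
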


\begin{proof}
Let
$$
        \delta=\frac{\log u}{2\log y},
        \qquad \sigma=1-\delta.
$$
The hypotheses give $\sigma\ge3/4$.  Rankin's inequality gives
$$
        \Psi(z,y)
        \le z^\sigma\prod_{p\le y}(1-p^{-\sigma})^{-1}.
$$
Now
$$
        z^\sigma=z\exp\left(-\frac12u\log u\right).
$$
Since $\sigma\ge3/4$,
$$
        \log\prod_{p\le y}(1-p^{-\sigma})^{-1}
        \le\sum_{p\le y}p^{-1+\delta}+O(1).
$$
Partial summation with the elementary estimate $\pi(t)\ll t/\log t$ yields, uniformly for $0<\delta\le1/4$,
$$
        \sum_{p\le y}p^{-1+\delta}
        \ll\log\log y+\frac{y^\delta}{\delta\log y}.
$$
Here $y^\delta=\sqrt{u}$ and $\delta\log y=\frac12\log u$, proving
\eqref{eq:smooth-number-rankin}.  In the final assertion,
$\log\log y=\log\log z+O(\log\log\log z)$, so the negative term
$-\frac12u\log u$ dominates the two error terms and gives
\eqref{eq:smooth-number-consequence}.
\end{proof}

\subsection{Test points and prime blocks}\label{sec:sparse}

We now set up the construction of Caich \cite{Caich}, in the notation fixed at the beginning of Section \ref{sec:prelim}.  Recall that $0<c_0\le10^{-3}$ and that $K\ge3$ is an integer. Set
$$
        x_i=\left\lfloor\exp(i^{c_0})\right\rfloor.
$$
We call the numbers $x_i$ the test points.  Set
$$
        X_\ell=\exp\left(2^{\ell^K}\right),
        \qquad
        L=\ell^K.
$$
For each index $i$ satisfying $X_{\ell-1}<x_i\le X_\ell$, define
$$
        y_0=\exp\left((\log X_\ell)^{1-K/\ell}\right)
$$
and
$$
        y_j=\exp\left(e^{j/\ell}
        (\log X_\ell)^{1-K/\ell}\right)
        \qquad(j\ge1).
$$
Let $J$ be minimal with $y_J\ge X_\ell$.  Then
\begin{equation}\label{eq:J-count}
        J=\left\lceil K(\log2)L\right\rceil\ll_KL,
\end{equation}
and, uniformly for indices $i$ satisfying $X_{\ell-1}<x_i\le X_\ell$ and for $0\le j\le J$,
\begin{equation}\label{eq:L-comparability}
        L\asymp\log\log x_i\asymp\log\log y_j.
\end{equation}
Moreover,
\begin{equation}\label{eq:block-ratio}
        \frac{\log y_j}{\log y_{j-1}}=e^{1/\ell}
        \qquad(1\le j\le J).
\end{equation}
The first lemma of this subsection shows that the fluctuation of $M_f$ between consecutive test points is, almost surely, smaller than any fixed power of $\log$; consequently it will suffice to bound $M_f$ at the points $x_i$ themselves.  The proof combines the hypercontractive inequality (Lemma \ref{lem:hypercontractive}), a chaining decomposition of the interval $(x_{i-1},x_i]$ into dyadic blocks, and the short-interval divisor bound (Lemma \ref{lem:shiu-divisor}); it is at this point that the sparseness parameter $c_0$ is fixed.

\begin{lem}\label{lem:sparse-points}
For every fixed $A>0$, one can choose $0<c_0=c_0(A)\le10^{-3}$ such that, almost surely, for all sufficiently large $i$,
$$
        \max_{x_{i-1}<x\le x_i}
        |M_f(x)-M_f(x_{i-1})|
        \ll_A\frac{\sqrt{x_i}}{(\log x_i)^A}.
$$
\end{lem}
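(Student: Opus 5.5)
We prove Lemma \ref{lem:sparse-points} by chaining over dyadic-type subdivisions, bounding each increment with a high moment via Lemma \ref{lem:hypercontractive}, and closing with Borel--Cantelli over $i$.

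\textbf{Length of the interval.} Write $H_i=x_i-x_{i-1}$. Since $x_i=\lfloor\exp(i^{c_0})\rfloor$, the mean value theorem gives
$$
H_i\asymp c_0 i^{c_0-1}x_i=c_0\frac{(\log x_i)^{1-1/c_0}}{1}\,x_i\cdot O(1),
$$
so $H_i\asymp x_i(\log x_i)^{-(1/c_0-1)}$. This is much larger than $x_i^{1/2}$, and it is a small power of $\log$ below $x_i$, with the power as large as we like once $c_0$ is small.

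\textbf{Chaining.} For $0\le k\le k_i$, where $2^{-k_i}H_i\approx x_i^{1/2}$ and hence $k_i\ll\log x_i$, split $(x_{i-1},x_i]$ into $2^k$ consecutive intervals $I_{k,a}$ of length $h_k=2^{-k}H_i$. Each $x$ in the range is approximated from $x_{i-1}$ by a sum of at most one interval from each level, plus a final remainder of length at most $h_{k_i}\le x_i^{1/2}$. That remainder is bounded trivially by $x_i^{1/2}\ll\sqrt{x_i}(\log x_i)^{-A}$ for every $A$; for Rademacher $f$ it is at most the number of integers in it, and for Steinhaus $|f|=1$, so the same bound holds. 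It therefore suffices to show that almost surely, for large $i$, every $k\le k_i$ and every $a$ satisfy
$$
\Bigl|\sum_{n\in I_{k,a}}f(n)\Bigr|\le\sqrt{x_i}(\log x_i)^{-A-1}.
$$
Summing over the $\ll\log x_i$ levels then loses only one factor of $\log x_i$.

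\textbf{Moment bound.} Fix $m$, to be chosen later. Lemma \ref{lem:hypercontractive} with $a_n=\ind_{n\in I_{k,a}}$, combined with Lemma \ref{lem:shiu-divisor} (valid because $h_k\ge x_i^{1/2}\ge x_i^\beta$ with $\beta=1/3$), gives
$$
\E\Bigl|\sum_{n\in I_{k,a}}f(n)\Bigr|^{2m}\ll_m\bigl(h_k(\log x_i)^{2m-2}\bigr)^m.
$$
By Markov's inequality, the probability that some level-$k$ interval exceeds the threshold is at most
$$
2^k\frac{h_k^m(\log x_i)^{m(2m-2)}}{x_i^m(\log x_i)^{-2m(A+1)}}.
$$
Using $2^k=H_i/h_k$ and $h_k\le H_i$, this is at most $(H_i/x_i)^m(\log x_i)^{2m^2+2mA}$. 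Since $H_i/x_i\ll(\log x_i)^{-(1/c_0-1)}$, the bound becomes
$$
(\log x_i)^{-m(1/c_0-1)+2m^2+2mA}.
$$
Summing over the $\ll\log x_i$ levels adds one more factor of $\log x_i$.

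\textbf{Choice of parameters.} Take $m=1$. Then the bound is $(\log x_i)^{-(1/c_0-1)+2+2A+1}=i^{-c_0(1/c_0-4-2A)}=i^{-(1-c_0(4+2A))}$. This is not summable, so $m=1$ fails. The order of choices must be reversed: first fix $m$ large, for instance $m=2$, and only then take $c_0$ small. With general $m$ the exponent of $i$ is $-c_0\bigl(m/c_0-m-2m^2-2mA-1\bigr)\approx-m$. So $m=2$ together with $c_0\le(100(A+2))^{-1}$ gives a summable bound $\sum_i i^{-3/2}$, and Borel--Cantelli finishes the proof.

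\textbf{Rademacher case and the main obstacle.} The same argument applies to Rademacher $f$, using the Rademacher analogue of Lemma \ref{lem:hypercontractive} (Bonami's inequality on squarefree coefficients gives the same divisor-weighted bound). The delicate step is this exponent bookkeeping: the hypercontractive divisor loss $(\log x)^{m(2m-2)}$ must be beaten by the sparseness gain $(\log x)^{-m/c_0}$. This works only because the loss grows like $m^2$ while the gain is $m/c_0$, so $c_0$ has to be chosen depending on both $A$ and $m$.
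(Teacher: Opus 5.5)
\textbf{A gap in the remainder step.} As written, one step fails. You stop the chaining at $h_{k_i}\approx x_i^{1/2}$ and bound the remainder trivially by $x_i^{1/2}$, claiming $x_i^{1/2}\ll\sqrt{x_i}(\log x_i)^{-A}$. But $x_i^{1/2}=\sqrt{x_i}$, so the trivial bound gives only $O(\sqrt{x_i})$. It loses exactly the factor $(\log x_i)^A$ that the lemma is meant to gain.

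The repair costs nothing. Take $k_i$ maximal with $h_{k_i}\ge x_i^{1/3}$. Then:
\begin{itemize}
\item Lemma \ref{lem:shiu-divisor} applies to every $I_{k,a}$ with $\beta=1/3$, taking the left endpoint $u$ as the base point, since $u^{1/3}\le x_i^{1/3}\le h_k\le H_i\le x_{i-1}\le u$ for large $i$.
\item The number of levels is still $O(\log x_i)$.
\item The remainder is at most $2x_i^{1/3}+1$.
\item Your Markov estimate uses only $2^kh_k^m\le H_i^m$, which is uniform in $k$.
\end{itemize}
So the exponent $-m+c_0(1+m+2m^2+2mA)$ is unchanged. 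The choice $m=2$ goes through with $c_0\le\min\{10^{-3},(100(A+2))^{-1}\}$. The explicit cap $10^{-3}$ is needed because $(100(A+2))^{-1}$ can exceed it.

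\textbf{Comparison with the paper.} With this fix your argument is correct, and it is a reorganisation of the paper's chaining rather than a new idea.
\begin{itemize}
\item \emph{Your route.} You take a union bound over every dyadic interval at every level against the common threshold $\sqrt{x_i}(\log x_i)^{-A-1}$, paying one factor of $\log x_i$ for the levels. You apply Shiu to each short interval separately, which is what forces truncation at a power of $x_i$ and the trivial bound below it.
\item \emph{The paper's route.} It chains in $L^{2m}$. Minkowski over scales, together with $\E\max_b|Y_b|^{2m}\le\sum_b\E|Y_b|^{2m}$, gives a geometric series in $\Delta$ dominated by the top scale. H\"older, $(\sum_{n\in I}\tau_k(n))^m\le\Delta^{m-1}\sum_{n\in I}\tau_k(n)^m$, combined with $\tau_k^m\le\tau_{K_m}$, lets Shiu be applied once to the whole interval $(x_{i-1},x_i]$.
\end{itemize}
The paper's version lets the chaining run down to unit length with no remainder term. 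It also yields a moment bound for the maximum itself, $\E\max|M_f(x)-M_f(x_{i-1})|^{2m}\ll h_i^m(\log x_i)^{C_m}$. Your version has simpler bookkeeping and an explicit $C_m=2m^2-2m$, coming from Shiu with $k=2m-1$.
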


\begin{rem}
    Before giving the proof, we explain how Lemma \ref{lem:sparse-points} will be used.  Let $R(x)$ be a fixed power of $\log\log x$.  Suppose we can show that
\begin{equation}\label{eq:reduction-goal}
        \sum_{\ell\ge1}\P\left(
        \sup_{X_{\ell-1}<x_i\le X_\ell}
        \frac{|M_f(x_i)|}{\sqrt{x_i}R(x_i)}>3\right)<\infty.
\end{equation}
Then, by the first Borel--Cantelli lemma, almost surely $|M_f(x_i)|\le3\sqrt{x_i}R(x_i)$ for all sufficiently large $i$; and if $x_{i-1}<x\le x_i$, then by the triangle inequality
\begin{equation*}
        |M_f(x)|
        \le|M_f(x_i)|
        +2\max_{x_{i-1}<t\le x_i}|M_f(t)-M_f(x_{i-1})|,
\end{equation*}
where the second term is $O_A(\sqrt{x_i}(\log x_i)^{-A})=o(\sqrt{x_i}R(x_i))$ by Lemma \ref{lem:sparse-points}.  Since $x_i/x_{i-1}\to1$, we have $\sqrt{x_i}R(x_i)\asymp\sqrt{x}R(x)$ uniformly for $x_{i-1}<x\le x_i$; hence $|M_f(x)|\ll\sqrt{x}R(x)$ for all sufficiently large $x$.  Thus it suffices to prove \eqref{eq:reduction-goal} with $R(x)=(\log\log x)^{1/4+1/(2r)+\eta}$; the factor $3$ accommodates the three terms in \eqref{eq:largest-prime}.
\end{rem}

\begin{proof}[Proof of Lemma \ref{lem:sparse-points}]
Let $h_i=x_i-x_{i-1}$.  The mean value theorem gives
$$
        h_i\asymp\frac{x_i}{i^{1-c_0}}.
$$
Fix an integer $m>2$.  For an interval $I\subset(x_{i-1},x_i]$,
Lemma \ref{lem:hypercontractive} gives
\begin{equation}\label{eq:interval-moment}
        \E\left|\sum_{n\in I}f(n)\right|^{2m}
        \le\left(\sum_{n\in I}\tau_{2m-1}(n)\right)^m.
\end{equation}
For each dyadic $\Delta=2^\nu\le h_i$, partition
$(x_{i-1},x_i]$ into consecutive intervals $I_{\nu,b}$ of length $\Delta$, with one possibly shorter terminal interval.  Every initial subinterval is a disjoint union of at most two such intervals at each scale.  Minkowski's inequality and
$\E\max_b|Y_b|^{2m}\le\sum_b\E|Y_b|^{2m}$ therefore give
\begin{equation}\label{eq:chaining}
\begin{aligned}
&\left\|
\max_{x_{i-1}<x\le x_i}|M_f(x)-M_f(x_{i-1})|
\right\|_{2m}\ll
\sum_{0\le\nu\le\lfloor\log h_i/\log2\rfloor}
\left(
\sum_b\E\left|\sum_{n\in I_{\nu,b}}f(n)\right|^{2m}
\right)^{1/(2m)}.
\end{aligned}
\end{equation}

Let $k=2m-1$.  H\"older's inequality gives
$$
        \left(\sum_{n\in I_{\nu,b}}\tau_k(n)\right)^m
        \le\Delta^{m-1}\sum_{n\in I_{\nu,b}}\tau_k(n)^m.
$$
For fixed $m$, there is an integer $K_m$ such that
$\tau_k(n)^m\le\tau_{K_m}(n)$ for every $n$.  Summing over $b$, using
\eqref{eq:interval-moment}, and applying Lemma \ref{lem:shiu-divisor} to the entire interval $(x_{i-1},x_i]$, we obtain
$$
        \sum_b\E\left|\sum_{n\in I_{\nu,b}}f(n)\right|^{2m}
        \ll_m\Delta^{m-1}h_i(\log x_i)^{C_m}.
$$
The application of Shiu is uniform because
$$
        h_i\asymp
        \frac{x_i}{(\log x_i)^{(1-c_0)/c_0+o(1)}}
        \ge x_i^\beta
$$
for every fixed $\beta<1$ once $i$ is large.  Substitution in
\eqref{eq:chaining} and summation of the geometric series give
$$
        \E\left[
        \max_{x_{i-1}<x\le x_i}
        |M_f(x)-M_f(x_{i-1})|^{2m}
        \right]
        \ll_mh_i^m(\log x_i)^{C_m}.
$$
Consequently,
$$
\begin{aligned}
&\P\left(
\max_{x_{i-1}<x\le x_i}|M_f(x)-M_f(x_{i-1})|
>\frac{\sqrt{x_i}}{(\log x_i)^A}
\right)\\
&\qquad\ll_{m,A}
i^{-m(1-c_0)}(\log x_i)^{C_m+2Am}
\ll_{m,A}
i^{-m(1-c_0)+c_0(C_m+2Am)}.
\end{aligned}
$$
Choose $m>2$ and then $c_0>0$ so small that the final exponent is less than $-1$.  Borel--Cantelli proves the lemma.
\end{proof}

\subsection{The largest prime factor decomposition}\label{sec:decomposition}

For $y\ge2$, set
$$
        \Psi_f(x,y)=\sum_{\substack{n\le x\\P(n)\le y}}f(n),
        \qquad
        \Psi_f'(x,y)=\sum_{\substack{n\le x\\P(n)<y}}f(n),
$$
the partial sums of $f$ over $y$-smooth integers, with the weak and strict smoothness conditions respectively. 
At each test point $x_i$ with $X_{\ell-1}<x_i\le X_\ell$, define
$$
        M_f^{(1)}(x_i)
        =\sum_{y_0<p\le y_J}
        f(p)\Psi_f'\left(\frac{x_i}{p},p\right)
$$
and
$$
        M_f^{(2)}(x_i)
        =
        \sum_{\substack{n\le x_i,\ P(n)>y_0\\
        v_{P(n)}(n)\ge2}}f(n).
$$

The next lemma is the combinatorial decomposition of $M_f(x_i)$ described in Section \ref{sec:outline-decomposition}, together with a block-by-block expansion of the repeated-prime part which prepares it for the conditional moment estimates of the next subsection.

\begin{lem}\label{lem:decomposition}
For $X_{\ell-1}<x_i\le X_\ell$,
\begin{equation}\label{eq:largest-prime}
        M_f(x_i)=
        \Psi_f(x_i,y_0)+M_f^{(1)}(x_i)+M_f^{(2)}(x_i).
\end{equation}
Moreover,
\begin{equation}\label{eq:largest-prime-multiple}
 M_f^{(2)}(x_i)
 =\sum_{j=1}^{J}
 \sum_{\substack{y_{j-1}^{2}<d\le x_i\\
 y_{j-1}<q\le y_j\ {\rm for\ every\ prime}\ q\mid d\\
 v_{P(d)}(d)\ge2}}
 f(d)\Psi_f\left(\frac{x_i}{d},y_{j-1}\right).
\end{equation}
\end{lem}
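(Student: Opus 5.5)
Both identities are combinatorial and hold pointwise for every realisation of $f$, so I would prove them by partitioning index sets, using only unique factorisation and complete multiplicativity. For \eqref{eq:largest-prime}, I would split the integers $n\le x_i$ according to $P(n)$. If $P(n)\le y_0$, then $n$ is counted in $\Psi_f(x_i,y_0)$. Otherwise $P(n)>y_0$, and since $P(n)\le n\le x_i\le X_\ell\le y_J$, we have $y_0<P(n)\le y_J$. Let $p=P(n)$. If $v_p(n)=1$, write $n=pm$ with $P(m)<p$ and $m\le x_i/p$. Conversely, every pair $(p,m)$ with $y_0<p\le y_J$, $P(m)<p$ and $m\le x_i/p$ yields a unique such $n=pm$. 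Since $f(pm)=f(p)f(m)$, this part is exactly $\sum_p f(p)\Psi_f'(x_i/p,p)=M_f^{(1)}(x_i)$. The remaining integers, those with $P(n)>y_0$ and $v_{P(n)}(n)\ge2$, form $M_f^{(2)}(x_i)$ by definition. The three classes are disjoint and exhaust all $n\le x_i$, which proves \eqref{eq:largest-prime}.

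For \eqref{eq:largest-prime-multiple}, take $n$ in the support of $M_f^{(2)}(x_i)$. Let $j\in\{1,\dots,J\}$ be the unique index with $y_{j-1}<P(n)\le y_j$; the blocks partition $(y_0,y_J]$. Write $n=de$, where $d$ is the part of $n$ composed of primes in $(y_{j-1},y_j]$ (with multiplicity), and $e$ is the $y_{j-1}$-smooth part. Since $n$ has no prime factors above $y_j$, this factorisation is unique. Now $P(d)=P(n)$ and $v_{P(d)}(d)=v_{P(n)}(n)\ge2$, so $d\ge P(d)^2>y_{j-1}^2$. Also $d\le n\le x_i$, and $e\le x_i/d$ with $P(e)\le y_{j-1}$.

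Conversely, fix $j$, a $d$ with all prime factors in $(y_{j-1},y_j]$, $v_{P(d)}(d)\ge2$ and $y_{j-1}^2<d\le x_i$, and an $e\le x_i/d$ with $P(e)\le y_{j-1}$. Then $n=de$ satisfies $n\le x_i$, $P(n)=P(d)\in(y_{j-1},y_j]$ (so $P(n)>y_0$), and $v_{P(n)}(n)=v_{P(d)}(d)\ge2$. This gives a bijection between the support of $M_f^{(2)}(x_i)$ and the triples $(j,d,e)$. Complete multiplicativity gives $f(n)=f(d)f(e)$, and summing over $e$ yields $f(d)\Psi_f(x_i/d,y_{j-1})$.

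Two small points need attention. First, the lower bound $y_{j-1}^2<d$ is automatically satisfied, so it is a harmless restriction rather than a genuine constraint. Second, the boundary convention has to be kept consistent: blocks are half-open $(y_{j-1},y_j]$, which is why $\Psi_f$ with the weak condition $P(e)\le y_{j-1}$ appears here, while $\Psi_f'$ appears in $M_f^{(1)}$. The main obstacle is simply checking that the decomposition $n=de$ is a bijection; none of the steps is analytically hard.
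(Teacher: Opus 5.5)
Your proposal is correct and follows essentially the same route as the paper. Both split $n\le x_i$ by whether $P(n)\le y_0$ and by the multiplicity of $P(n)$. Both then handle the repeated-prime class through the unique factorisation $n=de$ into the block part $d$ and the $y_{j-1}$-smooth part $e$, checking the converse direction to get a bijection.
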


\begin{proof}
Since $y_J\ge X_\ell\ge x_i$, every integer $n\le x_i$ either satisfies $P(n)\le y_0$ or has its largest prime factor $P(n)$ in a unique half-open block $(y_{j-1},y_j]$ with $1\le j\le J$.  The integers of the first class contribute $\Psi_f(x_i,y_0)$.

Let $p=P(n)>y_0$.  If $v_p(n)=1$, write $n=pm$.  Then $P(m)<p$; the inequality is strict even when $m$ has another prime factor in the same block, because $p$ divides $n$ exactly once.  By complete multiplicativity $f(n)=f(p)f(m)$, so $n$ occurs exactly once in $f(p)\Psi_f'(x_i/p,p)$; conversely every term of $M_f^{(1)}(x_i)$ arises in this way from a unique such $n$.

Suppose instead that $v_p(n)\ge2$, and let $(y_{j-1},y_j]$ be the unique block containing $p$.  Set
\begin{equation*}
        d=\prod_{\substack{q\mid n,\ q\ \mathrm{prime}\\
        y_{j-1}<q\le y_j}}q^{v_q(n)},
        \qquad m=\frac{n}{d}.
\end{equation*}
Then $P(d)=p$ and $v_{P(d)}(d)=v_p(n)\ge2$, so $d\ge p^{2}>y_{j-1}^{2}$; every prime factor of $d$ lies in $(y_{j-1},y_j]$; and $P(m)\le y_{j-1}$, since any prime factor of $m$ exceeding $y_{j-1}$ would lie in the block $(y_{j-1},y_j]$ (it cannot exceed $y_j\ge p=P(n)$) and hence would divide $d$.  Conversely, a term $dm$ on the right-hand side of \eqref{eq:largest-prime-multiple} corresponds to the integer $n=dm\le x_i$ with largest prime factor $P(d)>y_0$ of multiplicity $v_{P(d)}(d)\ge2$; unique factorisation and the uniqueness of the half-open block recover $d$ and $m$ from $n$.  Hence \eqref{eq:largest-prime-multiple} counts each remaining integer exactly once, with the correct weight $f(n)=f(d)f(m)$, proving both identities.  This is the decomposition used by Caich \cite{Caich}.
\end{proof}

Given a positive function $R$, we now define the three families of bad events whose probabilities must be summed.  For $a=0,1,2$, let
\begin{equation}\label{eq:Bcal-def}
        \CB_\ell^{(a)}
        =\left\{\sup_{X_{\ell-1}<x_i\le X_\ell}
        \frac{|M_f^{(a)}(x_i)|}{\sqrt{x_i}R(x_i)}>1\right\},
\end{equation}
with the convention $M_f^{(0)}(x_i)=\Psi_f(x_i,y_0)$.  In view of \eqref{eq:largest-prime} and the reduction \eqref{eq:reduction-goal}, Proposition \ref{prop:fixed-r} will follow once we show that
\begin{equation*}
        \sum_{\ell\ge1}\P(\CB_\ell^{(a)})<\infty
        \qquad(a=0,1,2),
\end{equation*}
with $R(x)=(\log\log x)^{1/4+1/(2r)+\eta}$.

\subsection{Smooth and repeated-prime contributions}\label{sec:repeated}
This subsection disposes of the events $\CB_\ell^{(0)}$ and $\CB_\ell^{(2)}$.  Recall from the beginning of Section \ref{sec:prelim} that in this subsection the moment parameter is $m=L=\ell^{K}$.  For $X_{\ell-1}<x_i\le X_\ell$ and $1\le j\le J$, define
$$
        N_{i,j}=
        \sum_{\substack{y_{j-1}^{2}<d\le x_i\\
        v_{P(d)}(d)\ge2\\
        p\mid d\Rightarrow y_{j-1}<p\le y_j}}
        f(d)\Psi_f\left(\frac{x_i}{d},y_{j-1}\right)
$$
so that $M_f^{(2)}(x_i)=\sum_{j=1}^{J}N_{i,j}$ by \eqref{eq:largest-prime-multiple}, and the associated conditional moment majorants
$$
        X_{i,j}=
        \sum_{\substack{y_{j-1}^{2}<d\le x_i\\
        v_{P(d)}(d)\ge2\\
        p\mid d\Rightarrow y_{j-1}<p\le y_j}}
        \tau_{2m-1}(d)
        \left|\Psi_f\left(\frac{x_i}{d},y_{j-1}\right)\right|^2.
$$
Note that $X_{i,j}$ is $\CF_{y_{j-1}}$-measurable. 
Let
\begin{equation}\label{eq:Xcal-def}
        \CX_\ell=
        \left\{
        \sup_{\substack{X_{\ell-1}<x_i\le X_\ell\\1\le j\le J}}
        \frac{X_{i,j}}{x_i}\le\ell^{-10K}
        \right\}.
\end{equation}

The next lemma collects the conditional moment bound for $N_{i,j}$ (an application of the hypercontractive inequality on the block, parallel to the one we shall make in Proposition \ref{prop:block-moment}) together with a first-moment bound for $X_{i,j}$, which is small because integers $d>y_{j-1}^{2}$ composed of block primes with a repeated largest prime factor are rare.

\begin{lem}\label{lem:Nij}
For all sufficiently large $\ell$ in terms of $K$, uniformly in $i$ and
$1\le j\le J$,
$$
        \E[|N_{i,j}|^{2m}\mid\CF_{y_{j-1}}]\le X_{i,j}^m
$$
and
$$
        \E X_{i,j}
        \ll_{K,c_0}
        \frac{x_im^2e^{Cm/\ell}}{y_{j-1}},
$$
where $C>0$ is absolute.
\end{lem}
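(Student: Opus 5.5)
For the first inequality we work conditionally on $\CF_{y_{j-1}}$. Given $\CF_{y_{j-1}}$, each coefficient $b_d=\Psi_f(x_i/d,y_{j-1})$ is fixed, because it only involves $f(p)$ with $p\le y_{j-1}$. The variables $f(d)$, for $d$ composed of primes in $(y_{j-1},y_j]$, form a Steinhaus random multiplicative function on the new prime coordinates, and this function is independent of $\CF_{y_{j-1}}$. Conditionally, then, $N_{i,j}=\sum_d b_d f(d)$ is a linear form in these variables. We apply Lemma \ref{lem:hypercontractive} to this block function, i.e.\ to a Steinhaus function supported on the block primes, with coefficients $a_d=b_d$ on the admissible $d$ and $a_d=0$ otherwise. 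This gives
$$
        \E[|N_{i,j}|^{2m}\mid\CF_{y_{j-1}}]\le\Big(\sum_d|b_d|^2\tau_{2m-1}(d)\Big)^m=X_{i,j}^m .
$$
The sum is finite, so the finiteness hypothesis holds. To make this rigorous, condition via independence, using Fubini on the product space.

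For the second bound, take expectations inside $X_{i,j}$. By orthogonality, $\E|\Psi_f(z,y_{j-1})|^2=\Psi(z,y_{j-1})\le z$. Hence
$$
        \E X_{i,j}\le x_i\sum_{d}\frac{\tau_{2m-1}(d)}{d},
$$
where $d$ runs over integers with all prime factors in $(y_{j-1},y_j]$, $d>y_{j-1}^2$, and repeated largest prime. Rather than using the size condition on $d$, we use the repeated largest prime: write $d=p^a d'$ with $a\ge2$, $p$ in the block, and $d'$ built from block primes (allowing $d'$ to be unrestricted makes an upper bound). Then
$$
        \sum_d\frac{\tau_{2m-1}(d)}{d}\le\Big(\sum_{y_{j-1}<p\le y_j}\sum_{a\ge2}\frac{\tau_{2m-1}(p^a)}{p^a}\Big)\prod_{y_{j-1}<q\le y_j}\Big(1-\frac1q\Big)^{-(2m-1)}.
$$

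For the Euler product, Lemma \ref{lem:quantitative-mertens} and \eqref{eq:block-ratio} give $\sum_{y_{j-1}<q\le y_j}1/q=1/\ell+o(1/\ell)$. With $m=L$ and $q>y_0$ huge, the product is therefore $\exp((2m-1)(1/\ell+O(1/y_{j-1})))\le e^{Cm/\ell}$.

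For the prime-power sum, $\tau_{2m-1}(p^a)=\binom{a+2m-2}{2m-2}$. The term $a=2$ contributes $\binom{2m}{2}/p^2\ll m^2/p^2$. For $a\ge3$ we use $\binom{a+2m-2}{a}\le(2m)^a/a!$ together with $p>y_0\gg m^{100}$, so these terms are dominated by a geometric series of ratio $\le 2m/p$. Summing over primes $p>y_{j-1}$ gives $\sum_p m^2/p^2\ll m^2/y_{j-1}$.

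Combining the two estimates yields $\E X_{i,j}\ll x_i m^2e^{Cm/\ell}/y_{j-1}$. The dependence on $K,c_0$ enters only through the requirement that $\ell$ be large enough for $y_0\ge m^{10}$ and for the Mertens error to be $\le1/\ell$; this holds because $\log y_0=(\log X_\ell)^{1-K/\ell}$ is enormous compared with $\log m=K\log\ell$. The main point needing care is this uniform control of the $a\ge3$ binomial terms for growing $m$, which is where $y_{j-1}\ge y_0$ being superpolynomially large in $m$ is used.
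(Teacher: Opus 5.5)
Your proposal is correct and is essentially the paper's argument: the paper simply cites Caich's conditional block hypercontractive bound (his equation (13)) and his first-moment calculation (his equation (15)), and notes that the latter needs block primes exceeding $2m-1$, which is exactly what your factorisation $d=p^ad'$ uses to produce the $\binom{2m}{2}/p^2$ term and the Euler factor $e^{Cm/\ell}$. One small slip: the bound $\binom{a+2m-2}{a}\le(2m)^a/a!$ fails for $a\ge4$ (for example $m=2$, $a=4$ gives $15>32/3$), but replacing it with $\tau_{2m-1}(p^a)\le(2m-1)^a$ gives the same geometric series of ratio at most $2m/p$, so your conclusion is unchanged.
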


\begin{proof}
The conditional inequality is exactly the block hypercontractive calculation in Caich \cite[equation (13)]{Caich}; the frozen coefficients are the old-prime smooth sums occurring there, and the newly revealed prime variables are precisely those indexed by primes in $(y_{j-1},y_j]$.  Caich's calculation leading to \cite[equation (15)]{Caich} then gives the stated first moment.

That calculation requires every block prime to exceed $2m-1$.  Here
$$
        \log y_0=2^{\ell^K-K\ell^{K-1}},
        \qquad m=\ell^K,
$$
so the requirement holds for all sufficiently large $\ell$.  The final block causes no range change: because $d\le x_i$, primes exceeding $x_i$ contribute nothing, and truncating the last block at $x_i$ leaves both calculations unchanged.
\end{proof}

The following lemma is a slight variant of Caich's argument.  We include the proof because the estimate is stated for a general positive function $R$.  The first shows that the event $\CX_\ell$ fails with summable probability; the second bounds the probability of $\CB_\ell^{(2)}$ on $\CX_\ell$, where the conditional bound of order $2m$ is available.
\begin{lem}\label{lem:repeated-prime-probability}
One has
$$
        \sum_{\ell\ge1}\P(\bar\CX_\ell)<\infty.
$$
For every positive function $R$, writing
$$
        R_\ell=\inf_{X_{\ell-1}<x_i\le X_\ell}R(x_i),
$$
one also has, for all sufficiently large $\ell$,
\begin{equation}\label{eq:repeated-general-R}
        \P(\CB_\ell^{(2)}\cap\CX_\ell)
        \ll_{K,c_0}
        2^{L/c_0}J
        \left(\frac{J^2}{\ell^{10K}R_\ell^2}\right)^m.
\end{equation}
\end{lem}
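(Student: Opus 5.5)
Both assertions will be proved from Lemma \ref{lem:Nij}. For the first, Markov's inequality and a union bound are applied over the test points in the $\ell$-th range and over $1\le j\le J$. For the second, conditional Markov at order $2m$ is applied on $\CX_\ell$, after splitting $M_f^{(2)}(x_i)=\sum_jN_{i,j}$ by pigeonhole.

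\emph{Step 1: $\sum_\ell\P(\bar\CX_\ell)<\infty$.} The test points with $X_{\ell-1}<x_i\le X_\ell$ have $i\le(\log X_\ell)^{1/c_0}=2^{L/c_0}$, so there are at most $2^{L/c_0}$ of them. Markov's inequality and Lemma \ref{lem:Nij} give, for each $i$ and $j$,
\[
\P\Bigl(\frac{X_{i,j}}{x_i}>\ell^{-10K}\Bigr)\le\ell^{10K}\frac{\E X_{i,j}}{x_i}\ll_{K,c_0}\ell^{10K}\frac{m^2e^{Cm/\ell}}{y_{j-1}}.
\]
Since $y_{j-1}\ge y_0$, summing over $i$ and $j$ gives
\[
\P(\bar\CX_\ell)\ll 2^{L/c_0}J\ell^{10K}L^2e^{CL/\ell}\exp(-\log y_0).
\]
Here $\log y_0=2^{L-K\ell^{K-1}}$ is doubly exponential in $\ell$, while everything else is at most $\exp(O_{K,c_0}(L))$. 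Hence $\P(\bar\CX_\ell)\ll\exp(-2^{L/2})$ for large $\ell$, which is summable. The only care needed is to compare $2^{L-K\ell^{K-1}}$ with $L/c_0+CL/\ell$, which is immediate.

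\emph{Step 2: the bound \eqref{eq:repeated-general-R}.} On $\CB_\ell^{(2)}$ some $x_i$ in the range has $|M_f^{(2)}(x_i)|>\sqrt{x_i}R(x_i)\ge\sqrt{x_i}R_\ell$. Since $M_f^{(2)}(x_i)=\sum_{j\le J}N_{i,j}$, some $j$ then has $|N_{i,j}|>\sqrt{x_i}R_\ell/J$. Thus
\[
\P(\CB_\ell^{(2)}\cap\CX_\ell)\le\sum_{i}\sum_{j\le J}\P\bigl(\{|N_{i,j}|>\sqrt{x_i}R_\ell/J\}\cap\{X_{i,j}\le x_i\ell^{-10K}\}\bigr).
\]
The event $\{X_{i,j}\le x_i\ell^{-10K}\}$ is $\CF_{y_{j-1}}$-measurable. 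Condition on $\CF_{y_{j-1}}$, apply Markov's inequality at order $2m$, and use the first bound of Lemma \ref{lem:Nij}. Each summand is then at most
\[
\E\Bigl[\ind_{X_{i,j}\le x_i\ell^{-10K}}\frac{J^{2m}X_{i,j}^m}{x_i^mR_\ell^{2m}}\Bigr]\le\Bigl(\frac{J^2}{\ell^{10K}R_\ell^2}\Bigr)^m.
\]
Summing over at most $2^{L/c_0}$ values of $i$ and $J$ values of $j$ gives \eqref{eq:repeated-general-R}.

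The main obstacle is not conceptual. The conditioning must be done on the individual event $\{X_{i,j}\le x_i\ell^{-10K}\}$, which is $\CF_{y_{j-1}}$-measurable, rather than on $\CX_\ell$ itself. This matters because $\CX_\ell$ involves all blocks and so is not $\CF_{y_{j-1}}$-measurable. Once that is arranged, the remaining work is the bookkeeping of the test-point count $2^{L/c_0}$ against the doubly exponential decay in $y_0$.
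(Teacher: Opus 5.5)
Your proposal is correct and matches the paper's proof step for step. The first part is Markov's inequality with a union over the $O(2^{L/c_0})$ test points and $J$ blocks, dominated by the doubly exponential size of $y_0$; the second pigeonholes $M_f^{(2)}(x_i)=\sum_jN_{i,j}$ and applies conditional Markov at order $2m$ on the $\CF_{y_{j-1}}$-measurable event $\{X_{i,j}\le x_i\ell^{-10K}\}\supseteq\CX_\ell$, with only the harmless cosmetic difference that you threshold at $\sqrt{x_i}R_\ell/J$ rather than $\sqrt{x_i}R(x_i)/J$.
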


\begin{proof}
There are $O(2^{L/c_0})$ indices $i$ satisfying
$X_{\ell-1}<x_i\le X_\ell$.  Markov's inequality, Lemma \ref{lem:Nij}, $J\ll_KL$, and $y_{j-1}\ge y_0$ give
$$
\begin{aligned}
        \P(\bar\CX_\ell)
        &\le
        \sum_{\substack{X_{\ell-1}<x_i\le X_\ell\\1\le j\le J}}
        \frac{\ell^{10K}\E X_{i,j}}{x_i}\\
        &\ll_{K,c_0}
        2^{L/c_0}J\ell^{10K}
        \frac{m^2e^{Cm/\ell}}{y_0}.
\end{aligned}
$$
Since
$$
        y_0=\exp\left(2^{L-K\ell^{K-1}}\right),
$$
this is summable in $\ell$.

For \eqref{eq:repeated-general-R}, if
$|\sum_{j=1}^JN_{i,j}|>\sqrt{x_i}R(x_i)$, then
$|N_{i,j}|>\sqrt{x_i}R(x_i)/J$ for some $j$.  Because
$\CX_\ell\subseteq\{X_{i,j}\le x_i\ell^{-10K}\}$ and the latter event is
$\CF_{y_{j-1}}$-measurable, conditional Markov and Lemma \ref{lem:Nij} give
$$
\begin{aligned}
&\P\left(
|N_{i,j}|>\frac{\sqrt{x_i}R(x_i)}J,\
X_{i,j}\le x_i\ell^{-10K}
\right)\\
&\qquad\le
\left(\frac{J^2}{x_iR(x_i)^2}\right)^m
\E\left[
\ind_{\{X_{i,j}\le x_i\ell^{-10K}\}}X_{i,j}^m
\right]\\
&\qquad\le
\left(\frac{J^2}{\ell^{10K}R(x_i)^2}\right)^m.
\end{aligned}
$$
The union over $i$ and $j$ proves \eqref{eq:repeated-general-R}.  Thus the estimate for a general positive $R$ follows from the Markov calculation itself, rather than from the statement of Caich's proposition.
\end{proof}

We can now dispose of the smooth and repeated-prime contributions at the exponent required below.

\begin{lem}\label{lem:B0-B2}
Let $R(x)=(\log\log x)^\alpha$.  Then
$$
        \sum_{\ell\ge1}\P(\CB_\ell^{(0)})<\infty
$$
for every fixed $\alpha\in\R$, and
$$
        \sum_{\ell\ge1}\P(\CB_\ell^{(2)})<\infty
$$
for every fixed $\alpha>1/4$.
\end{lem}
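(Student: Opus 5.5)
For $\CB_\ell^{(2)}$ we bound $\P(\CB_\ell^{(2)})\le\P(\bar\CX_\ell)+\P(\CB_\ell^{(2)}\cap\CX_\ell)$. The first term is summable by Lemma \ref{lem:repeated-prime-probability}. For the second we use \eqref{eq:repeated-general-R}. For $X_{\ell-1}<x_i\le X_\ell$ we have $\log\log x_i\asymp L=\ell^K$ by \eqref{eq:L-comparability}, so for $\alpha>0$ we get $R_\ell\gg_K \ell^{K\alpha}$. Since $J\ll_K L=\ell^K$, the base of the $m$-th power is
$$
\frac{J^2}{\ell^{10K}R_\ell^2}\ll_K \ell^{2K-10K-2K\alpha}\le \ell^{-8K},
$$
which is at most $\ell^{-1}$ once $\ell$ is large. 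With $m=L=\ell^K$ the right side of \eqref{eq:repeated-general-R} is therefore at most $2^{L/c_0}J\,\ell^{-\ell^K}=\exp\bigl(\ell^K(\log 2/c_0-\log\ell)\bigr)J$. This is summable, because $\log\ell$ eventually exceeds $\log2/c_0+1$. In fact any $\alpha>-3$ works here, and $\alpha>1/4$ is more than enough.

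For $\CB_\ell^{(0)}$ we use the moment method with Lemma \ref{lem:hypercontractive} applied to $\Psi_f(x_i,y_0)=\sum_{n\le x_i,P(n)\le y_0}f(n)$. With a moment $2k$ we get
$$
\E|\Psi_f(x_i,y_0)|^{2k}\le\Bigl(\sum_{n\le x_i,\,P(n)\le y_0}\tau_{2k-1}(n)\Bigr)^k.
$$
We bound the inner sum by Cauchy--Schwarz as $\Psi(x_i,y_0)^{1/2}\bigl(\sum_{n\le x_i}\tau_{2k-1}(n)^2\bigr)^{1/2}\ll_k \Psi(x_i,y_0)^{1/2}x_i^{1/2}(\log x_i)^{C_k}$. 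Now $\log y_0=(\log X_\ell)^{1-K/\ell}$, while $\log x_i>\log X_{\ell-1}=(\log X_\ell)^{2^{-(\ell^K-(\ell-1)^K)}}$. Here we must be careful: $y_0$ can be large compared with $x_i$ when $x_i$ is near $X_{\ell-1}$. In that case $P(n)\le y_0$ may hold for all $n\le x_i$, and the Rankin saving fails.

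So the right route is to note that $\log y_0=2^{L-K\ell^{K-1}}$ while $\log X_{\ell-1}=2^{(\ell-1)^K}$. Since $(\ell-1)^K\approx L-K\ell^{K-1}+\binom K2\ell^{K-2}$, we have $(\ell-1)^K>L-K\ell^{K-1}$, so $y_0<X_{\ell-1}<x_i$. The ratio of exponents is $u=\log x_i/\log y_0\ge 2^{\binom K2\ell^{K-2}(1+o(1))}$, which is huge, far beyond $\log\log x_i$. Lemma \ref{lem:smooth-number-rankin}, via its general form \eqref{eq:smooth-number-rankin}, or by monotonicity in $y$ combined with \eqref{eq:smooth-number-consequence}, then gives $\Psi(x_i,y_0)\ll x_i(\log x_i)^{-A}$ for every fixed $A$. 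The hypothesis $\log u\le\frac12\log y_0$ holds since $\log y_0$ is doubly exponential in $\ell^K$ while $\log u\ll \ell^K$. If needed, we simply replace $y_0$ by $\min(y_0,x_i^{1/\log\log x_i})$ when applying the monotone bound $\Psi(x_i,y_0)\le\Psi(x_i,y')$ for $y'\ge y_0$.

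Hence, with $k=2$ fixed and $A$ large,
$$
\E|\Psi_f(x_i,y_0)|^{4}\ll x_i^2(\log x_i)^{-A/2+2C_2}.
$$
Markov's inequality gives $\P(|\Psi_f(x_i,y_0)|>\sqrt{x_i}R(x_i))\ll(\log x_i)^{-A'}R(x_i)^{-4}$. Because $R$ is a power of $\log\log$, this is harmless for any real $\alpha$. We then sum over all $i$: since $\log x_i\asymp i^{c_0}$, the sum $\sum_i i^{-c_0A'}$ converges once $A'>1/c_0$. This gives $\sum_\ell\P(\CB_\ell^{(0)})<\infty$. The main obstacle is the bookkeeping verifying $y_0\le x_i^{1/\log\log x_i}$ uniformly in the range, which is what makes the smooth-number saving available.
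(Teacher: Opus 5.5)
Your proposal is correct and essentially follows the paper. The $\CB_\ell^{(2)}$ part is exactly the paper's argument via Lemma~\ref{lem:repeated-prime-probability} with $m=L$. For $\CB_\ell^{(0)}$ the paper also uses the binomial expansion of $(\ell-1)^K$ to get $u_i\ge\log\log x_i$, i.e.\ $y_0\le x_i^{1/\log\log x_i}$, and then applies Lemma~\ref{lem:smooth-number-rankin}.

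The one difference is the moment used for $\CB_\ell^{(0)}$. The paper takes the exact identity $\E|\Psi_f(x_i,y_0)|^2=\Psi(x_i,y_0)$ from orthogonality, with Chebyshev and a union over the $\exp(C_{c_0}L)$ test points. Your fourth-moment and Cauchy--Schwarz detour is therefore unnecessary, though it is valid.

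Your discarded intermediate formula for $\log X_{\ell-1}$ is wrong; it should be $(\log X_\ell)^{(\ell-1)^K/\ell^K}$. You do not use it, so it does not affect the argument, but it should be removed.
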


\begin{proof}
For $X_{\ell-1}<x_i\le X_\ell$, let
$$
        u_i=\frac{\log x_i}{\log y_0}.
$$
Since $x_i>X_{\ell-1}$ and
$\log y_0=2^{\ell^K-K\ell^{K-1}}$, the binomial theorem gives the base-two calculation
$$
\begin{aligned}
        \frac{\log u_i}{\log 2}
        &\ge(\ell-1)^K-\ell^K+K\ell^{K-1}\\
        &=\frac{K(K-1)}2\ell^{K-2}+O_K(\ell^{K-3}).
\end{aligned}
$$
On the other hand, $x_i\le X_\ell$ gives
$$
        \log\log x_i\le(\log 2)\ell^K,
        \qquad
        \frac{\log(\log\log x_i)}{\log 2}=O_K(\log\ell).
$$
The lower bound for $(\log u_i)/\log 2$ is of order
$\ell^{K-2}$ and therefore eventually exceeds
$\log(\log\log x_i)/\log 2$, uniformly in $i$.  Thus
$u_i\ge\log\log x_i$ for all sufficiently large $\ell$, and hence
$$
        y_0\le x_i^{1/\log\log x_i}.
$$
By \eqref{eq:orthogonality}, Lemma \ref{lem:smooth-number-rankin}, and monotonicity in the smoothness bound,
$$
\begin{aligned}
        \E|\Psi_f(x_i,y_0)|^2
        &=\#\{n\le x_i:P(n)\le y_0\}\\
        &\ll
        x_i(\log x_i)^{-c\log\log\log x_i}.
\end{aligned}
$$
Chebyshev and a union over the $O(\exp(C_{c_0}L))$ indices $i$ satisfying
$X_{\ell-1}<x_i\le X_\ell$ give
$$
        \P(\CB_\ell^{(0)})
        \ll
        \exp(C_{c_0}L-cL\log\ell)\ell^{O_\alpha(1)},
$$
which is summable.

For the repeated-prime term, Lemma
\ref{lem:repeated-prime-probability} gives a summable contribution from
$\bar\CX_\ell$.  Since $m=L$, $J\ll_KL$, and
$R_\ell^2\asymp L^{2\alpha}$,
$$
        \frac{J^2}{\ell^{10K}R_\ell^2}
        \ll_K
        \frac{L^2}{\ell^{10K}L^{2\alpha}}
        =\ell^{-K(8+2\alpha)}.
$$
Equation \eqref{eq:repeated-general-R} is therefore bounded by
$$
        \exp\left(
        C_{c_0}L-K(8+2\alpha)L\log\ell+O_K(\log L)
        \right),
$$
which is summable for $\alpha>1/4$.
\end{proof}

\subsection{Smoothing the quadratic variation}\label{sec:smoothing}
It remains to treat the event $\CB_\ell^{(1)}$.  Recall that
$$
        V_\ell(x_i;f)
        =
        \sum_{y_0<p\le y_J}
        \left|\Psi_f'\left(\frac{x_i}{p},p\right)\right|^2.
$$
Let
$$
        D=8m_0^2-8m_0+4,
        \qquad
        X=(\log x_i)^D,
        \qquad
        T(\ell)=\ell^{10},
        \qquad
        T_1(\ell)=\frac{T(\ell)}{\ell\log\ell}.
$$
Thus $\log X\asymp_{m_0}L$ whenever
$X_{\ell-1}<x_i\le X_\ell$.  The following estimate for reciprocal sums
of primes in the very short multiplicative intervals $(v/(1+1/X),v]$ is
used repeatedly in the auxiliary estimates.  Our blocks extend farther than
those in Caich's smoothing step \cite{Caich}, so we verify the full range.

\begin{lem}\label{lem:short-reciprocal-primes}
For all sufficiently large $\ell$, uniformly for
$X_{\ell-1}<x_i\le X_\ell$ and
$y_0\le v\le y_J(1+1/X)$,
\begin{equation}\label{eq:short-reciprocal-primes}
        \sum_{v/(1+1/X)<p\le v}\frac1p
        \ll_{K,m_0}\frac1{X\log v}.
\end{equation}
\end{lem}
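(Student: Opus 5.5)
The plan is to derive \eqref{eq:short-reciprocal-primes} from the quantitative Mertens estimate, Lemma \ref{lem:quantitative-mertens}. The idea is to write the sum as a difference of two Mertens sums and show that the error term is negligible compared with the main term. Applying \eqref{eq:quantitative-mertens} at $u=v$ and at $u=v/(1+1/X)$ gives
$$
        \sum_{v/(1+1/X)<p\le v}\frac1p
        =\log\log v-\log\log\frac{v}{1+1/X}
        +O\left(\exp\left(-c\sqrt{\log (v/2)}\right)\right).
$$
The main term is
$$
        -\log\left(1-\frac{\log(1+1/X)}{\log v}\right)\ll\frac{1}{X\log v},
$$
because $\log(1+1/X)\le1/X$ and $1/(X\log v)$ is tiny. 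This step needs only $v/(1+1/X)\ge3$, which holds since $v\ge y_0$.

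The remaining task is to show that the error term is also $\ll1/(X\log v)$. In other words, we need
$$
        \exp\left(-c\sqrt{\log v}\right)\ll\frac{1}{(\log x_i)^D\log v}
$$
uniformly over the whole range. This step carries the content of the remark that the blocks extend farther than Caich's, so it is where the range check happens. The worst case is the smallest $v$, namely $v=y_0$. There $\log y_0=(\log X_\ell)^{1-K/\ell}=2^{L-K\ell^{K-1}}$, so
$$
        \sqrt{\log v}\ge 2^{(L-K\ell^{K-1})/2}.
$$
On the other side, $\log X+\log\log v\le D\log\log X_\ell+\log\log y_J\ll_{m_0} L$, using $\log\log y_J\le\log\log X_\ell+1/\ell$ by the minimality of $J$. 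Since $2^{(L-K\ell^{K-1})/2}$ grows exponentially in $L=\ell^K$, while the required saving is only $\exp(O_{m_0}(L))$, the error term is dominated for all sufficiently large $\ell$ in terms of $K$ and $m_0$.

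For larger $v$ the comparison only improves. The function $\exp(-c\sqrt{\log v})\log v$ is decreasing once $\log v$ is large, and $X$ does not depend on $v$, so the bound at $v=y_0$ covers the whole range up to $y_J(1+1/X)$.

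The main point to verify is therefore this uniform domination at $v=y_0$. It rests on $\log y_0$ being doubly exponential in $\ell$, specifically $2^{\ell^K(1-K/\ell)}$, whereas $\log X\asymp L$ is only polynomial in $\ell$. Beyond that, one must only confirm that the implied constants depend on $K$ and $m_0$ alone, through $D$ and the threshold on $\ell$.
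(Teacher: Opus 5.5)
Your proposal is correct and follows essentially the same route as the paper: both apply Lemma \ref{lem:quantitative-mertens} at $v$ and at $v/(1+1/X)$, bound the main term $-\log\bigl(1-\log(1+1/X)/\log v\bigr)$ by $O(1/(X\log v))$, and absorb the Mertens error because $\log(v/(1+1/X))\gg\log y_0$ is doubly exponential in $\ell$, while $\log(X\log v)\ll_{K,m_0}L$. The only difference is cosmetic: you reduce to the worst case $v=y_0$ via monotonicity of $e^{-c\sqrt{\log v}}\log v$, whereas the paper bounds $\log(X\log v)\ll L$ and uses $\log u\ge\frac12\log y_0$ uniformly over the whole range.
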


\begin{proof}
Set $u=v/(1+1/X)$.  Lemma \ref{lem:quantitative-mertens} gives
$$
\begin{aligned}
        \sum_{u<p\le v}\frac1p
        &=
        \log\log v-\log\log u
        +O\left(e^{-c\sqrt{\log u}}\right)\\
        &=
        -\log\left(
        1-\frac{\log(1+1/X)}{\log v}
        \right)
        +O\left(e^{-c\sqrt{\log u}}\right).
\end{aligned}
$$
The main term is $O(1/(X\log v))$.  Moreover,
$\log(X\log v)\ll_{K,m_0}L$, while
$$
        \log u\ge\tfrac12\log y_0
        =2^{L-K\ell^{K-1}-1}
$$
for large $\ell$.  The error term is therefore
$O_{K,m_0}(1/(X\log v))$, uniformly through the final block.
\end{proof}

For $z\le x$, let
$$
        \Psi_f'(x,z,y)=
        \sum_{\substack{z<n\le x\\P(n)<y}}f(n).
$$
Define
$$
        W_\ell(x_i;f)=
        \sum_{y_0<p\le y_J}\frac Xp
        \int_p^{p(1+1/X)}
        \left|
        \Psi_f'\left(\frac{x_i}{p},\frac{x_i}{t},p\right)
        \right|^2\dee t.
$$
For $1\le j\le J$, let
$$
        M_\ell(x_i,y_j;f)
        =
        \frac1{\log y_j}
        \int_{x_i/y_j}^{x_i/y_{j-1}}
        \left|\Psi_f\left(z,\frac{x_i}{z}\right)\right|^2
        \frac{\dee z}{z^2},
$$
$$
\begin{aligned}
        \lambda_\ell^{(2)}(x_i,y_j;f)
        &=
        \frac1{\log y_j}
        \int_{x_i/y_j}^{x_i/y_{j-1}}
        \sup_{\frac{x_i}{z(1+1/X)}\le q\le x_i/z}
        \left|
        \sum_{\substack{n\le z\\
        x_i/(z(1+1/X))<P(n)<q}}f(n)
        \right|^2
        \frac{\dee z}{z^2},
\end{aligned}
$$
and
$$
\begin{aligned}
        \lambda_\ell^{(3)}(x_i,y_j;f)
        &=
        \frac1{\log y_j}
        \int_{x_i/y_j}^{x_i/y_{j-1}}
        \left|
        \sum_{\substack{n\le z\\
        x_i/(z(1+1/X))<P(n)\le x_i/z}}f(n)
        \right|^2
        \frac{\dee z}{z^2}.
\end{aligned}
$$
Set
$$
        \lambda_\ell^{(2)}(x_i,y_0;f)
        =
        \lambda_\ell^{(3)}(x_i,y_0;f)=0.
$$
The supremum in $\lambda_\ell^{(2)}$ is measurable because the inner sum is constant between consecutive primes.

The edge terms are
$$
\begin{aligned}
        L_\ell^{(12)}(x_i;f)
        &=
        \sum_{\substack{1\le j\le J\\
        \log x_i/\log y_{j-1}>\ell^{100K}}}
        \int_{x_i/y_j}^{x_i/y_{j-1}}
        X\sum_{x_i/(z(1+1/X))<p\le x_i/z}
        \frac{|\Psi_f'(z,p)|^2}{p}\frac{\dee z}{z^2},
\end{aligned}
$$
and
$$
\begin{aligned}
        L_\ell^{(2)}(x_i;f)
        &=
        \sum_{\substack{1\le j\le J\\x_i\ge y_{j-1}}}
        \int_{x_i/(y_j(1+1/X))}^{x_i/y_j}
        X\sum_{\max\{x_i/(z(1+1/X)),y_{j-1}\}<p\le y_j}
        \frac{|\Psi_f'(z,p)|^2}{p}\frac{\dee z}{z^2}.
\end{aligned}
$$

\begin{lem}\label{lem:lambda-two}
One has
$$
\sum_{\ell\ge2}
\P\left(
\sup_{\substack{X_{\ell-1}<x_i\le X_\ell\\0\le j\le J}}
\lambda_\ell^{(2)}(x_i,y_j;f)
>
T_1(\ell)L^{-1/2}
\right)<\infty.
$$
\end{lem}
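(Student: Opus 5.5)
The plan is to bound the expectation of a dominating random variable by orthogonality and then apply Markov's inequality together with a union bound over test points and blocks. First I would remove the supremum over $q$. For fixed $z$, the inner sum ranges over integers $n\le z$ whose largest prime lies in the short range $(x_i/(z(1+1/X)),q)$. Writing $n=p^a m$ with $p=P(n)$ and $P(m)<p$, the sum equals
$$
\sum_{x_i/(z(1+1/X))<p<q}\ \sum_{a\ge1} f(p)^a\,\Psi_f'(z/p^a,p).
$$
I would then apply Cauchy--Schwarz in the crude form $|\sum_{p\in S}A_p|^2\le(\#S)\sum_{p\in S}|A_p|^2$, with $S$ the set of all primes in $(x_i/(z(1+1/X)),x_i/z]$. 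This bound is independent of $q$, so the supremum disappears. The primes in that set all lie in an interval of multiplicative length $1+1/X$. If this crude count turns out to be too lossy, I would instead run a dyadic chaining in $q$ over the primes of the short interval, exactly as in Lemma~\ref{lem:sparse-points}, applied with second moments. That route costs only a factor $(\log\#S)^2$.

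Next I would take expectations using \eqref{eq:orthogonality}. The second moment of $\sum_a f(p)^a\Psi_f'(z/p^a,p)$ is at most $\sum_a\lfloor z/p^a\rfloor\ll z/p$. Combining this with $\#S\cdot\sum_{p\in S}z/p$ and integrating against $\dee z/(z^2\log y_j)$, I expect a bound of the shape
$$
\E\lambda^{(2)}_\ell\ll \frac1{\log y_j}\int \#S(z)\sum_{p\in S(z)}\frac1p\,\frac{\dee z}{z}.
$$
Here Lemma~\ref{lem:short-reciprocal-primes} gives $\sum_{p\in S}1/p\ll 1/(X\log(x_i/z))$. The factor $\#S$ is the delicate part. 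Its size is roughly $1+ x_i/(zX\log)$, which is large when $x_i/z$ is large compared to $X=(\log x_i)^D$.

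To handle this, I would swap the order of summation instead. Write $\lambda^{(2)}\le \frac{1}{\log y_j}\int\big(\sum_{p\in S}|B_p|\big)^2\dee z/z^2$. Expanding the square gives a double sum over $p,p'\in S$. The cross terms have expectation zero unless $p=p'$, except for interactions with powers, which are negligible since $a\ge2$ forces $p^2\le z$. Even though the supremum in $q$ blocks this expansion directly, the chaining version reduces everything to second moments of sums over dyadic sub-ranges of primes, where orthogonality applies. The result would then be
$$
\E\sup_j\lambda^{(2)}\ll \sum_j\frac{(\log X)^2}{X\log y_j}\cdot\log\frac{y_j}{y_{j-1}}\ll \frac{J(\log X)^2}{X\ell}.
$$
Since $X=(\log x_i)^D$ is enormous compared with any power of $L$, this is tiny.

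Finally, the union bound runs over the $O(2^{L/c_0})$ test points and the $J\ll L$ blocks. Markov's inequality at threshold $T_1(\ell)L^{-1/2}$ then gives the probability bound
$$
2^{L/c_0}\,L^{O(1)}(\log X_{\ell-1})^{-D}\le 2^{L/c_0}L^{O(1)}2^{-D(\ell-1)^K},
$$
which is summable. Here I use $D\ge 4$ together with the fact that $c_0$ is fixed, so that $D(\ell-1)^K$ dominates $L/c_0$ once $D>1/c_0$. This is exactly why $m_0>1/c_0$ is imposed, since $D=8m_0^2-8m_0+4$.

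The main obstacle is handling the supremum over $q$ without losing the factor $\#S$. I expect the chaining argument in $q$ over the short prime interval, with orthogonality inside each dyadic piece, to be the step requiring care.
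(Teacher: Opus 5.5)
Your final route (orthogonality, a Rademacher--Menshov chaining in $q$ to remove the supremum, first-moment Markov, then a union over the $O(2^{L/c_0})$ test points and $J\ll L$ blocks) does prove the lemma and differs from the paper's. Two intermediate claims need correcting, though neither is fatal.

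\emph{The size of the chaining loss.} The loss is $(1+\log_2\#S)^2$, and $\#S$ is nowhere near $X$. Since $x_i/z\ge y_{j-1}\ge y_0$ while $X=(\log x_i)^D$, one has $\log\#S\asymp\log(x_i/z)$. On the effective range $z\ge1$ this can be as large as $\log x_i$. So, writing $S_2(z)$ for the squared supremum, your method actually gives
\[
\E S_2(z)\ll z\log(x_i/z)/X,\qquad
\E\lambda_\ell^{(2)}(x_i,y_j;f)\ll\log(y_j/y_{j-1})/X\ll\log x_i/(\ell X).
\]
In particular the $1/\log y_{j-1}$ saving from Lemma~\ref{lem:short-reciprocal-primes} is consumed. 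This is harmless: your union bound becomes $2^{L/c_0}L^{O(1)}(\log X_{\ell-1})^{1-D}$. That is summable because $D-1=8m_0^2-8m_0+3>m_0>1/c_0$.

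\emph{The cross-term expansion.} Expanding $(\sum_{p\in S}|B_p|)^2$ does not produce vanishing cross terms. Orthogonality controls $\E B_p\overline{B_{p'}}$, not $\E|B_p||B_{p'}|$. Drop that paragraph and keep only the chaining on signed block sums. There, sums over disjoint prime ranges are orthogonal, so at each level the block second moments add up to the second moment of the full sum.

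\emph{Comparison with the paper.} In fact no chaining is needed. As the paper notes via \eqref{eq:lambda-two-recurrence}, the map $q\mapsto\sum_{n\le z,\,a<P(n)<q}f(n)$ is a martingale along the primes, since the coefficient of $f(p)^h$ involves only primes below $p$. Doob's $L^2$ inequality then gives
\[
\E S_2(z)\le4\,\#\{n\le z:a<P(n)\le b\}\ll z/(X\log y_{j-1})
\]
with no loss, hence $\E\lambda_\ell^{(2)}(x_i,y_j;f)\ll1/(\ell X\log y_{j-1})$.

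The paper instead works at exponent $4$:
\begin{itemize}
\item conditional Doob;
\item Lemma~\ref{lem:hypercontractive}, bounding the fourth moment by $\bigl(\sum\tau_3(n)\bigr)^2\ll z^2\log^4(2z)\bigl(\sum_{a<p\le b}1/p\bigr)^2$;
\item a weighted Cauchy--Schwarz in $z$, giving $\E[\lambda^2]\ll(\log x_i)^4/(X^2(\log y_0)^2)$;
\item second-moment Markov, with the sum over all $i$ bounded via $c_0(2D-4)>1$ and summability in $\ell$ supplied by $(\log y_0)^{-2}$.
\end{itemize}
Your version is more elementary: it needs only orthogonality, not hypercontractivity. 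The price is a logarithmic loss that the large power $X$ must absorb, and a reliance on the per-range count of test points for summability in $\ell$. The paper's version retains the $(\log y_0)^{-2}$ factor and runs parallel to its $\lambda^{(3)}$ estimate.
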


\begin{proof}
Fix $i,j,z$ with $1\le j\le J$ and
$x_i/y_j\le z\le x_i/y_{j-1}$, and let
$$
        a=\frac{x_i}{z(1+1/X)},
        \qquad b=\frac{x_i}{z}.
$$
For a prime $p\in(a,b]$, define
$$
        A_p(z)=\sum_{\substack{n\le z\\a<P(n)\le p}}f(n).
$$
If $p^-$ is the preceding prime, then
\begin{equation}\label{eq:lambda-two-recurrence}
        A_p(z)-A_{p^-}(z)
        =
        \sum_{\substack{h\ge1\\p^h\le z}}
        f(p)^h
        \Psi_f'\left(\frac{z}{p^h},p\right).
\end{equation}
The coefficient of each $f(p)^h$ is measurable before $f(p)$ is revealed and
$\E f(p)^h=0$.  Thus $(A_p(z))$ is a martingale, and conditional Doob at exponent $4$ gives
\begin{equation}\label{eq:lambda-two-doob}
\E\sup_{a\le q\le b}
\left|
\sum_{\substack{n\le z\\a<P(n)<q}}f(n)
\right|^4
\ll
\E\left|
\sum_{\substack{n\le z\\a<P(n)\le b}}f(n)
\right|^4.
\end{equation}
Lemma \ref{lem:hypercontractive} bounds the right-hand side by
$$
        \left(
        \sum_{\substack{n\le z\\a<P(n)\le b}}\tau_3(n)
        \right)^2.
$$
Writing $p=P(n)$ and $n=pk$, using
$\tau_3(pk)\le3\tau_3(k)$ and
$$
        \sum_{k\le w}\tau_3(k)
        \le w\left(\sum_{d\le w}\frac1d\right)^2
        \ll w\log^2(2w),
$$
we obtain
\begin{equation}\label{eq:lambda-two-fourth}
        \E\left|
        \sum_{\substack{n\le z\\a<P(n)\le b}}f(n)
        \right|^4
        \ll
        z^2\log^4(2z)
        \left(\sum_{a<p\le b}\frac1p\right)^2.
\end{equation}
Since $b\in[y_{j-1},y_j]$, Lemma
\ref{lem:short-reciprocal-primes} gives
$$
        \E\left|
        \sum_{\substack{n\le z\\a<P(n)\le b}}f(n)
        \right|^4
        \ll
        \frac{z^2\log^4(2z)}
        {X^2(\log y_{j-1})^2}.
$$

Let
$$
        A_j=\frac{x_i}{y_j},
        \qquad
        B_j=\frac{x_i}{y_{j-1}},
$$
and
$$
        S_2(z)=
        \sup_{\frac{x_i}{z(1+1/X)}\le q\le x_i/z}
        \left|
        \sum_{\substack{n\le z\\
        x_i/(z(1+1/X))<P(n)<q}}f(n)
        \right|^2.
$$
All sums defining $S_2(z)$ vanish for $0<z<1$.  Thus, if $B_j\le1$, then
$\lambda_\ell^{(2)}(x_i,y_j;f)=0$.  Suppose that $B_j>1$, and set
$$
        C_j=\max\{A_j,1\}.
$$
Then
$$
        \lambda_\ell^{(2)}(x_i,y_j;f)
        =
        \frac1{\log y_j}
        \int_{C_j}^{B_j}S_2(z)\frac{\dee z}{z^2}.
$$
The required weighted Cauchy--Schwarz inequality is
$$
\begin{aligned}
        \left(\int_{C_j}^{B_j}S_2(z)\frac{\dee z}{z^2}\right)^2
        &=
        \left(\int_{C_j}^{B_j}
        \frac{S_2(z)}{z^{3/2}}\frac{\dee z}{z^{1/2}}\right)^2\\
        &\le
        \left(\int_{C_j}^{B_j}\frac{\dee z}{z}\right)
        \left(\int_{C_j}^{B_j}S_2(z)^2\frac{\dee z}{z^3}\right).
\end{aligned}
$$
On the effective range $C_j\le z\le B_j$, one has
$1\le z\le x_i$ and hence $\log(2z)\ll\log x_i$.  Moreover,
$$
        \log(B_j/C_j)
        \le\log(B_j/A_j)
        =\log(y_j/y_{j-1}).
$$
Taking expectations and using \eqref{eq:lambda-two-doob} and the fourth-moment estimate above, we obtain
$$
\begin{aligned}
        \E[\lambda_\ell^{(2)}(x_i,y_j;f)^2]
        &\ll
        \frac{\log(B_j/C_j)}{(\log y_j)^2}
        \int_{C_j}^{B_j}
        \frac{z^2\log^4(2z)}
        {X^2(\log y_{j-1})^2}\frac{\dee z}{z^3}\\
        &\ll
        \frac{\log^2(y_j/y_{j-1})(\log x_i)^4}
        {X^2(\log y_j)^2(\log y_{j-1})^2}.
\end{aligned}
$$
By \eqref{eq:block-ratio},
$$
        \log\left(\frac{y_j}{y_{j-1}}\right)
        =(e^{1/\ell}-1)\log y_{j-1}
        \ll\frac{\log y_{j-1}}{\ell}.
$$
Consequently,
$$
        \E[\lambda_\ell^{(2)}(x_i,y_j;f)^2]
        \ll
        \frac{(\log x_i)^4}
        {\ell^2X^2(\log y_{j-1})^2}
        \le
        \frac{(\log x_i)^4}
        {X^2(\log y_0)^2}.
$$
Markov's inequality, $J\ll_KL$, and the threshold
$T_1(\ell)L^{-1/2}$ give
$$
\begin{aligned}
&\P\left(
\sup_{\substack{X_{\ell-1}<x_i\le X_\ell\\1\le j\le J}}
\lambda_\ell^{(2)}(x_i,y_j;f)>T_1(\ell)L^{-1/2}
\right)\\
&\qquad\ll_K
\frac{L^2}{T_1(\ell)^2(\log y_0)^2}
\sum_{X_{\ell-1}<x_i\le X_\ell}
\frac{(\log x_i)^4}{X^2}.
\end{aligned}
$$
Since $X=(\log x_i)^D$ and $c_0(2D-4)>1$, the sum over $i$ is uniformly bounded.  The remaining factor $(\log y_0)^{-2}$ makes the probabilities summable.  The endpoint $j=0$ vanishes by definition.
\end{proof}
The next proposition collects the deterministic smoothing inequality and the
remaining auxiliary estimates.  Outside events of summable probability, each
auxiliary quantity contributes $O(T(\ell)L^{-1/2})$.

\begin{prop}\label{prop:caich-smoothing}
For all sufficiently large $\ell$ and every
$X_{\ell-1}<x_i\le X_\ell$,
\begin{equation}\label{eq:smoothing}
\begin{aligned}
        \frac{V_\ell(x_i;f)}{x_i}
        \ll{}&
        \ell\log\ell\sup_{1\le j\le J}M_\ell(x_i,y_j;f)
        +\ell\log\ell\sum_{k=2}^3
        \sup_{0\le j\le J}\lambda_\ell^{(k)}(x_i,y_j;f)\\
        &+L_\ell^{(12)}(x_i;f)+L_\ell^{(2)}(x_i;f)
        +\frac{W_\ell(x_i;f)}{x_i}.
\end{aligned}
\end{equation}
Moreover,
\begin{equation}\label{eq:auxiliary-events}
\begin{aligned}
&\sum_{\ell\ge2}
\P\left(
\sup_{\substack{X_{\ell-1}<x_i\le X_\ell\\0\le j\le J}}
\lambda_\ell^{(k)}(x_i,y_j;f)>
T_1(\ell)L^{-1/2}
\right)<\infty
\qquad(k=2,3),\\
&\sum_{\ell\ge2}
\P\left(
\sup_{X_{\ell-1}<x_i\le X_\ell}
L_\ell^{(12)}(x_i;f)>T(\ell)L^{-1/2}
\right)<\infty,\\
&\sum_{\ell\ge2}
\P\left(
\sup_{X_{\ell-1}<x_i\le X_\ell}
L_\ell^{(2)}(x_i;f)>T(\ell)L^{-1/2}
\right)<\infty,\\
&\sum_{\ell\ge2}
\P\left(
\sup_{X_{\ell-1}<x_i\le X_\ell}
\frac{W_\ell(x_i;f)}{x_i}>L^{-1/2}
\right)<\infty.
\end{aligned}
\end{equation}
\end{prop}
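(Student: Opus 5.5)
The proof has two parts: a deterministic inequality \eqref{eq:smoothing}, and probabilistic estimates for the four auxiliary quantities in \eqref{eq:auxiliary-events}.

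\emph{The deterministic inequality.} I will follow Caich's smoothing argument and check that it still works when the blocks run up to $y_J\ge X_\ell$. For each prime $p\in(y_0,y_J]$, I write $|\Psi_f'(x_i/p,p)|^2$ as its average over $t\in[p,p(1+1/X)]$ plus a correction. The elementary inequality $|a|^2\le2|b|^2+2|a-b|^2$ does this, with $b=\Psi_f'(x_i/t,p)$ and $a-b=\Psi_f'(x_i/p,x_i/t,p)$. The correction terms sum exactly to $W_\ell(x_i;f)$, after multiplying by $x_i$.

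For the main part, I substitute $z=x_i/t$, so that
$$
\sum_p\frac Xp\int_p^{p(1+1/X)}|\Psi_f'(x_i/t,p)|^2\,\dee t
= x_i\int\sum_{x_i/(z(1+1/X))<p\le x_i/z}\frac Xp\,|\Psi_f'(z,p)|^2\,\frac{\dee z}{z^2}.
$$
I then split the $z$-range according to the blocks $z\in[x_i/y_j,x_i/y_{j-1}]$. Pieces of $z$ where $p$ straddles a block endpoint go into $L_\ell^{(2)}$. Blocks with $\log x_i/\log y_{j-1}>\ell^{100K}$ are handled crudely and go into $L_\ell^{(12)}$.

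On the remaining blocks I compare $\Psi_f'(z,p)$ with $\Psi_f(z,x_i/z)$. The difference is a sum over $n\le z$ with $P(n)$ in $[p,x_i/z]$. It is bounded by the supremum in $\lambda^{(2)}$ plus the full short-range sum in $\lambda^{(3)}$. The total prime weight $X\sum_{p}1/p$ in the short interval is $\ll1/\log(x_i/z)$ by Lemma \ref{lem:short-reciprocal-primes}, and $\log(x_i/z)\asymp\log y_j$ on block $j$. Each block therefore contributes $M_\ell+\lambda^{(2)}+\lambda^{(3)}$, up to the factor $\log y_j/\log y_{j-1}$.

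Summing over blocks naively costs a factor $J$. To get $\ell\log\ell$ instead, I use the fact that only blocks with $\log y_j\le\ell^{100K}\log x_i$ remain, and that $\log y_j$ grows geometrically with ratio $e^{1/\ell}$. I bound $\sum_j(\cdot)\le\sup_j(\cdot)$ times the number of admissible $j$, which is $\ll\ell\log(\ell^{100K})\ll_K\ell\log\ell$. This is the origin of the factor $\ell\log\ell$.

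\emph{The probabilistic estimates.} The case $k=2$ is Lemma \ref{lem:lambda-two}.

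For $k=3$ I use the same argument without Doob. The fourth-moment bound \eqref{eq:lambda-two-fourth} together with Lemma \ref{lem:short-reciprocal-primes} gives $\E\lambda^{(3)}(x_i,y_j;f)^2\ll(\log x_i)^4/(X^2(\log y_0)^2)$. I then sum over $i$ and $j$ using $c_0(2D-4)>1$.

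For $W_\ell$ I use orthogonality \eqref{eq:orthogonality}. The sum $\E|\Psi_f'(x_i/p,x_i/t,p)|^2$ counts integers in an interval of length $\ll x_i/(pX)$, plus $1$. Hence
$$
\E W_\ell/x_i\ll\sum_{p\le y_J}\frac1p\Bigl(\frac1{pX}+\frac1{x_i}\Bigr)\cdot\frac{p}{1}\cdot\frac{1}{p}X,
$$
which is $\ll X^{-1}\log\log y_J$ plus a term that is negligible because $p\le x_i$ is the effective range. Markov's inequality and the union bound over the $O(2^{L/c_0})$ indices $i$ give a total of $2^{L/c_0}L^{1/2}L/X$, which is summable since $X=(\log x_i)^D$ with $D$ large.

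For $L_\ell^{(2)}$, the second moment is at most
$$
\int_{x_i/(y_j(1+1/X))}^{x_i/y_j}\frac{X}{z}\sum_{y_{j-1}<p\le y_j}\frac1p\,\frac{\dee z}{z}.
$$
The $z$-interval has logarithmic length $1/X$, so the $X$ cancels, and the result is $O(1/\ell)$ per block. That is not small. I will therefore use instead that the $p$-range is tiny: it is $\ll1/(X\log y_{j-1})$ by Lemma \ref{lem:short-reciprocal-primes}, because $\max\{x_i/(z(1+1/X)),y_{j-1}\}$ is within a factor $(1+1/X)^2$ of $y_j$. This gives expectation $\ll J/\log y_0$, which beats the union bound.

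For $L_\ell^{(12)}$, $\E|\Psi_f'(z,p)|^2\le z$ and Lemma \ref{lem:short-reciprocal-primes} give a contribution $\ll\sum_j\log(y_j/y_{j-1})/\log y_{j-1}$, restricted to the blocks with $\log y_{j-1}<\ell^{-100K}\log x_i$. Here I need the stronger fact that $\E|\Psi_f'(z,p)|^2$ is a smooth-number count: $z$ has size $x_i^{1-o(1)}$ while $p<x_i^{\ell^{-100K}}$, so $u\ge\ell^{100K}$. Lemma \ref{lem:smooth-number-rankin} then gives a saving $\exp(-c\,u\log u)$, and this dominates the $2^{L/c_0}$ union bound.

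\emph{Main obstacle.} I expect the main difficulty to be the bookkeeping in the deterministic inequality: keeping the block-boundary terms from producing a factor $J$ rather than $\ell\log\ell$, and making sure every error lands exactly in one of the five defined quantities, now that the blocks run up to $y_J\ge X_\ell$.
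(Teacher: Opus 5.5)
Your reconstruction of the deterministic inequality \eqref{eq:smoothing} is sound and more explicit than the paper, which simply cites Caich's Section 7.1. One correction there: the blocks left after removing $L_\ell^{(12)}$ satisfy $\log y_{j-1}\ge\ell^{-100K}\log x_i$; you wrote the reverse inequality. This lower bound, together with the vanishing of blocks with $y_{j-1}\ge x_i$ (where $z\le1$), confines $\log y_{j-1}$ to a range of ratio $\ell^{100K}$ and gives $\ll_K\ell\log\ell$ blocks. Your treatment of $\lambda_\ell^{(3)}$ and $L_\ell^{(12)}$ matches the paper. The real problems are in $W_\ell$ and $L_\ell^{(2)}$. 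Both come from underestimating the union over test points: there are $\asymp2^{L/c_0}$ indices $i$ in the $\ell$-th range with $c_0\le10^{-3}$, while $\log x_i\le2^L$ and $\log y_0=2^{L-K\ell^{K-1}}$.

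For $W_\ell$, bounding $\E|\Psi_f'(x_i/p,x_i/t,p)|^2$ by the interval length plus $1$ for each fixed $(p,t)$ is too lossy. After the weight $\frac Xp\cdot\frac pX=1$, the extra $+1$ summed over $y_0<p\le x_i$ contributes $\pi(x_i)/x_i\asymp1/\log x_i$ to $\E W_\ell/x_i$. This is not negligible: Markov plus the union over $i$ gives $\sum_iL^{1/2}/\log x_i\asymp\sum_iL^{1/2}i^{-c_0}$, which diverges. The missing step is the interchange the paper performs. For fixed $n$, the set of $t$ on which $n$ is counted has measure at most $p/X$, and it is nonempty only if $x_i/(n(1+1/X))<p\le x_i/n$. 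So by Lemma~\ref{lem:short-reciprocal-primes},
\[
\E W_\ell\le\sum_{n\le x_i/y_0}\#\Bigl\{p:\frac{x_i/n}{1+1/X}<p\le\frac{x_i}{n}\Bigr\}\ll\frac{x_i}{X}\sum_{n\le x_i/y_0}\frac1{n\log(x_i/n)}\ll\frac{x_i\log x_i}{X\log y_0}.
\]
With this repair your first-moment (orthogonality) route works, since $\sum_iL^{1/2}(\log x_i)^{1-D}$ converges when $c_0(D-1)>1$. It is then a simpler alternative to the paper's $m_0$-th moment bound via hypercontractivity, which yields $\E W_\ell^{m_0}\ll(x_i/\log x_i)^{m_0}$ and uses $c_0m_0>1$.

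For $L_\ell^{(2)}$ you drop the factor that matters. The $z$-interval has logarithmic length $\log(1+1/X)\le1/X$, which cancels the $X$ in the definition. The prime sum over $(y_j/(1+1/X),y_j]$ then contributes a further $\ll1/(X\log y_j)$, so $\E L_\ell^{(2)}\ll J/(X\log y_0)$, as in \eqref{eq:L2-expectation}. Your stated bound $J/\log y_0$ does not beat the union bound, because $\log y_0\le2^L$ is far smaller than $2^{L/c_0}$. Summability comes from the extra factor $X^{-1}=(\log x_i)^{-D}$, summed over $i$ using $c_0D>1$.
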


\begin{proof}
The pointwise inequality is the deterministic estimate obtained from the inequality immediately preceding equation~(24) and equation~(24) itself in \cite[Section~7.1]{Caich}.  With the strict and weak smoothness conventions specified above, its principal term, two $\lambda$-terms, two boundary terms, and $W$-term are exactly those in \eqref{eq:smoothing}.  The derivation is blockwise and pointwise.  It remains valid for the final block: primes exceeding $x_i$ contribute zero, while every short reciprocal-prime estimate needed up to $y_J$ is given uniformly by Lemma \ref{lem:short-reciprocal-primes}.  We prove all probabilistic auxiliary bounds here.

The case $k=2$ in the first line of \eqref{eq:auxiliary-events} is Lemma \ref{lem:lambda-two}.  For $k=3$, let
$$
        B(z)=
        \sum_{\substack{n\le z\\
        x_i/(z(1+1/X))<P(n)\le x_i/z}}f(n)
$$
and $S_3(z)=|B(z)|^2$.  The calculation
\eqref{eq:lambda-two-fourth}, without the preceding Doob step, gives
$$
        \E S_3(z)^2
        =
        \E|B(z)|^4
        \ll
        \frac{z^2\log^4(2z)}
        {X^2(\log y_{j-1})^2}.
$$
With $A_j=x_i/y_j$ and $B_j=x_i/y_{j-1}$ as above, all sums defining
$S_3(z)$ vanish for $0<z<1$.  If $B_j\le1$, then
$\lambda_\ell^{(3)}(x_i,y_j;f)=0$.  Otherwise, with
$C_j=\max\{A_j,1\}$,
$$
        \left(\int_{C_j}^{B_j}S_3(z)\frac{\dee z}{z^2}\right)^2
        \le
        \left(\int_{C_j}^{B_j}\frac{\dee z}{z}\right)
        \left(\int_{C_j}^{B_j}S_3(z)^2\frac{\dee z}{z^3}\right).
$$
On this effective range $\log(2z)\ll\log x_i$, and
$\log(B_j/C_j)\le\log(y_j/y_{j-1})$.  Therefore
$$
\begin{aligned}
        \E[\lambda_\ell^{(3)}(x_i,y_j;f)^2]
        &\ll
        \frac{\log^2(y_j/y_{j-1})(\log x_i)^4}
        {X^2(\log y_j)^2(\log y_{j-1})^2}\\
        &\ll
        \frac{(\log x_i)^4}
        {\ell^2X^2(\log y_{j-1})^2}
        \le
        \frac{(\log x_i)^4}
        {X^2(\log y_0)^2}.
\end{aligned}
$$
The same Markov and union calculation as in Lemma
\ref{lem:lambda-two} proves the $k=3$ assertion.

We next prove the high-moment estimate for $W_\ell$ with all parameter dependence visible.  Write $m=m_0$ and $k=2m-1$.  Minkowski's integral inequality and Lemma \ref{lem:hypercontractive} give
\begin{align}
        \|W_\ell(x_i;f)\|_m
        &\le
        \sum_{y_0<p\le y_J}\frac Xp
        \int_p^{p(1+1/X)}
        \left\|
        \Psi_f'\left(\frac{x_i}{p},\frac{x_i}{t},p\right)
        \right\|_{2m}^2\dee t \notag\\
        &\le
        \sum_{y_0<p\le y_J}\frac Xp
        \int_p^{p(1+1/X)}
        \sum_{\substack{x_i/t<n\le x_i/p\\P(n)<p}}
        \tau_k(n)\dee t.                       \label{eq:W-sequence}
\end{align}
Dropping $P(n)<p$ only enlarges the nonnegative sum.  For fixed $n,p$, the measure of the contributing $t$-set is at most $p/X$, so its weight after multiplication by $X/p$ is at most $1$.  A contribution is possible only when
$$
        \frac{x_i/n}{1+1/X}<p\le\frac{x_i}{n}.
$$
If $v=x_i/n$, Lemma \ref{lem:short-reciprocal-primes} and
$\#\{p\in I\}\le v\sum_{p\in I}1/p$ give
$$
        \#\left\{p:\frac{v}{1+1/X}<p\le v\right\}
        \ll\frac{v}{X\log v}.
$$
Whenever this set meets $(y_0,y_J]$, its parameter satisfies
$y_0<v\le y_J(1+1/X)$, so the bound is uniform.  Interchanging the nonnegative sums in \eqref{eq:W-sequence} therefore gives
$$
\begin{aligned}
        \|W_\ell(x_i;f)\|_m
        &\ll
        \frac{x_i}{X}
        \sum_{n\le x_i/y_0}
        \frac{\tau_k(n)}{n\log(x_i/n)}\\
        &\le
        \frac{x_i}{X\log y_0}
        \sum_{n\le x_i}\frac{\tau_k(n)}n.
\end{aligned}
$$
Since
$$
        \sum_{n\le x}\frac{\tau_k(n)}n
        \le
        \left(\sum_{d\le x}\frac1d\right)^k
        \ll_k(\log x)^k,
$$
and $D>k+1$, we obtain, uniformly in $i$ and $\ell$,
\begin{equation}\label{eq:W-moment}
        \E W_\ell(x_i;f)^{m_0}
        \ll_{m_0}\left(\frac{x_i}{\log x_i}\right)^{m_0}.
\end{equation}
Markov gives
$$
\P\left(
\sup_{X_{\ell-1}<x_i\le X_\ell}
\frac{W_\ell(x_i;f)}{x_i}>L^{-1/2}
\right)
\ll_{m_0}
\sum_{X_{\ell-1}<x_i\le X_\ell}
\frac{L^{m_0/2}}{(\log x_i)^{m_0}}.
$$
The sum over all $i$ converges because $\log x_i\asymp i^{c_0}$,
$c_0m_0>1$, and the factor $L^{m_0/2}$ grows only polylogarithmically in $i$.  This proves the $W_\ell$ assertion.

For $L_\ell^{(12)}$, consider a block occurring in its definition.  For
$z\in[x_i/y_j,x_i/y_{j-1}]$ and a prime
$$
        \frac{x_i}{z(1+1/X)}<p\le\frac{x_i}{z},
$$
we have, by \eqref{eq:block-ratio},
$$
\begin{aligned}
        \frac{\log z}{\log p}
        &\ge
        \frac{\log x_i-\log y_j}{\log y_j}\\
        &=
        \frac{\log x_i}{\log y_j}-1
        >
        e^{-1/\ell}\ell^{100K}-1
        \ge\frac12\ell^{100K}
\end{aligned}
$$
for sufficiently large $\ell$.  Orthogonality gives
$$
        \E|\Psi_f'(z,p)|^2
        =\#\{n\le z:P(n)<p\}
        \le\Psi(z,p).
$$
Lemma \ref{lem:smooth-number-rankin} applies uniformly because
$\log\log p\ll_KL$ and $\log p$ is doubly exponential in $\ell$.  It yields
\begin{equation}\label{eq:L12-smooth-decay}
        \E|\Psi_f'(z,p)|^2
        \le z\exp(-\ell^{99K})
\end{equation}
for all sufficiently large $\ell$.  Hence Lemma
\ref{lem:short-reciprocal-primes}, followed by $v=x_i/z$, gives
$$
\begin{aligned}
        \E L_\ell^{(12)}(x_i;f)
        &\ll
        e^{-\ell^{99K}}
        \sum_{j=1}^J
        \int_{x_i/y_j}^{x_i/y_{j-1}}
        \frac{\dee z}{z\log(x_i/z)}\\
        &=
        e^{-\ell^{99K}}
        \sum_{j=1}^J
        \int_{y_{j-1}}^{y_j}\frac{\dee v}{v\log v}\\
        &=
        \frac{J}{\ell}e^{-\ell^{99K}}
        \ll_K L e^{-\ell^{99K}}.
\end{aligned}
$$
In particular,
\begin{equation}\label{eq:L12-expectation}
        \E L_\ell^{(12)}(x_i;f)
        \ll_K L\exp(-\ell^{99K}/2).
\end{equation}
The weaker form \eqref{eq:L12-expectation} follows immediately from the preceding estimate.  Since there are at most $\exp(C_{c_0}L)$ indices $i$ satisfying
$X_{\ell-1}<x_i\le X_\ell$,
$$
\begin{aligned}
&\P\left(
\sup_{X_{\ell-1}<x_i\le X_\ell}
L_\ell^{(12)}(x_i;f)>T(\ell)L^{-1/2}
\right)\\
&\qquad\ll
\frac{L^{3/2}}{T(\ell)}
\exp(C_{c_0}L-\ell^{99K}/2),
\end{aligned}
$$
which is summable.

Finally, $\E|\Psi_f'(z,p)|^2\le z$.  In the definition of
$L_\ell^{(2)}$, one has
$$
        y_j\le\frac{x_i}{z}\le y_j(1+1/X),
$$
and every prime in the inner sum lies in
$(y_j/(1+1/X),y_j]$.  Lemma
\ref{lem:short-reciprocal-primes} gives
$$
\begin{aligned}
        \E L_\ell^{(2)}(x_i;f)
        &\ll
        \sum_{j=1}^J
        \int_{x_i/(y_j(1+1/X))}^{x_i/y_j}
        \frac{\dee z}{z\log y_j}\\
        &\ll
        \frac1X\sum_{j=1}^J\frac1{\log y_j}
        \ll\frac{J}{X\log y_0}.
\end{aligned}
$$
Thus
\begin{equation}\label{eq:L2-expectation}
        \E L_\ell^{(2)}(x_i;f)
        \ll\frac{J}{X\log y_0}.
\end{equation}
Markov and a union bound reduce the sum of the exceptional probabilities to
$$
        \sum_{\ell\ge2}
        \frac{L^{1/2}J}{T(\ell)\log y_0}
        \sum_{X_{\ell-1}<x_i\le X_\ell}(\log x_i)^{-D},
$$
which converges because $c_0D>1$.  This completes
\eqref{eq:auxiliary-events}.
\end{proof}

\section{The Steinhaus low moment}\label{sec:lowmoment}

For $Y\ge3$, define
$$
        F_Y(s)=
        \prod_{p\le Y}\left(1-\frac{f(p)}{p^s}\right)^{-1}.
$$
For $\mathrm{Re}(s)>0$, this is the Dirichlet series of $f$ restricted to
$Y$-smooth integers.  We require the following low moment of its full-line
mean square.
\begin{lem}\label{lem:steinhaus-harper}
Uniformly for $Y\ge3$,
$$
        \E\left[
        \left(
        \frac1{\log Y}
        \int_{-\infty}^{\infty}
        \left|
        \frac{F_Y(\tfrac12+it)}{\tfrac12+it}
        \right|^2\dee t
        \right)^{2/3}
        \right]
        \ll(\log\log Y)^{-1/3}.
$$
\end{lem}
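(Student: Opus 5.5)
The plan is to reduce the full-line integral to unit intervals and then use rotation invariance together with Harper's low-moment estimate on a single unit interval. Split the line into intervals $[n-\tfrac12,n+\tfrac12]$, $n\in\Z$. Since $|\tfrac12+it|^{-2}\asymp(1+n^2)^{-1}$ on the $n$-th interval,
$$
        \int_{-\infty}^{\infty}\left|\frac{F_Y(\tfrac12+it)}{\tfrac12+it}\right|^2\dee t
        \ll\sum_{n\in\Z}\frac1{1+n^2}\int_{n-1/2}^{n+1/2}|F_Y(\tfrac12+it)|^2\dee t .
$$
Because $2/3<1$, the function $u\mapsto u^{2/3}$ is subadditive on $[0,\infty)$. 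Applying it to this sum gives
$$
        \E\Big[(\cdot)^{2/3}\Big]\ll\sum_{n\in\Z}\frac1{(1+n^2)^{2/3}}\,\E\left[\left(\int_{n-1/2}^{n+1/2}|F_Y(\tfrac12+it)|^2\dee t\right)^{2/3}\right].
$$
The weights $(1+n^2)^{-2/3}$ are summable, since the exponent is $4/3>1$. It therefore suffices to bound each unit-interval expectation uniformly in $n$.

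The key point is rotation invariance in the Steinhaus model. For fixed $n$, the family $(f(p)p^{-in})_p$ has the same joint law as $(f(p))_p$. Since $F_Y(\tfrac12+i(t+n))$ is the same Euler product with $f(p)$ replaced by $f(p)p^{-in}$, the process $t\mapsto F_Y(\tfrac12+i(t+n))$ has the same law as $t\mapsto F_Y(\tfrac12+it)$. Consequently every unit-interval term equals the $n=0$ term.

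For the $n=0$ term, I will invoke Harper's low-moment theorem for Euler products on the critical line, from \cite{HarperLow}. That result gives, uniformly for large $Y$ and $q\in[0,1]$ (here $q=2/3$),
$$
        \E\left[\left(\frac1{\log Y}\int_{-1/2}^{1/2}|F_Y(\tfrac12+it)|^2\dee t\right)^{q}\right]\ll\left(\frac1{1+(1-q)\sqrt{\log\log Y}}\right)^{q}.
$$
For $q=2/3$ the right-hand side is $\asymp(\log\log Y)^{-1/3}$. Dividing by $\log Y$ before applying subadditivity then yields the claim.

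The main obstacle is matching the precise form of Harper's statement. The version I need uses the product over $p\le Y$, is uniform in $Y$, and takes the integral over an interval of length $1$. Harper's Key Proposition is phrased for $x$ and the product over $p\le x$, with the integral over $[-1/2,1/2]$ and the normalisation $1/\log x$, so it can be applied directly with $x=Y$. If his interval or his product range differs, I would cover $[-1/2,1/2]$ by a bounded number of translates, again using rotation invariance. Small $Y$ (bounded $Y$) are handled trivially by the second moment: by Jensen, the expectation is at most $(\E\int)^{2/3}\ll1$, since $\E|F_Y(\tfrac12+it)|^2=\prod_{p\le Y}(1-1/p)^{-1}$ is bounded. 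That constant is absorbed into the implied constant.
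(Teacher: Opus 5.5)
Your proposal is correct and follows essentially the same route as the paper: split the line into integer translates of $[-1/2,1/2]$, apply subadditivity of $u^{2/3}$ with summable weights $(1+|N|)^{-4/3}$, use the rotation invariance of $(f(p)p^{-iN})_p$ to reduce to $N=0$, invoke Harper's unit-interval estimate at $q=2/3$, and absorb bounded $Y$ via the second moment. The one detail to fix is the matching with Harper: his level-$k$ product runs over $p\le x^{e^{-(k+1)}}$, not $p\le x$. So you should take $k=0$ and $x=Y^e$ rather than $x=Y$; this changes $\log x$ only by a constant factor and $\log\log x$ only by an additive constant, whereas your fallback of covering by translates would only fix a mismatch in the interval, not in the product range.
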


\begin{proof}
In Section 4.1 of the proof of the upper bound in
\cite[Theorem 1]{HarperLow}, Harper proves
$$
\E\left[
\left(
\frac{e^k(1-q)\sqrt{\log\log x}}{\log x}
\int_{-1/2}^{1/2}
\left|
F_k\left(\tfrac12-\frac{k}{\log x}+it\right)
\right|^2\dee t
\right)^q
\right]\ll1,
$$
uniformly for
$$
        0\le k\le\lfloor\log\log\log x\rfloor,
        \qquad
        \frac23\le q\le
        1-\frac1{\sqrt{\log\log x}},
$$
where
$$
        F_k(s)=
        \prod_{p\le x^{e^{-(k+1)}}}
        \left(1-\frac{f(p)}{p^s}\right)^{-1}.
$$
Take $x=Y^e$, $k=0$ and $q=2/3$.  Then $F_0=F_Y$, and
\begin{equation}\label{eq:harper-interval}
        \E\left[
        \left(
        \frac1{\log Y}
        \int_{-1/2}^{1/2}|F_Y(\tfrac12+it)|^2\dee t
        \right)^{2/3}
        \right]
        \ll(\log\log Y)^{-1/3}.
\end{equation}

For $N\in\Z$, the variables $f(p)p^{-iN}$ are again independent and uniform on $\T$.  Therefore the process on
$[N-1/2,N+1/2]$ has the same law as that on $[-1/2,1/2]$, so
\eqref{eq:harper-interval} holds uniformly on every translated unit interval.  Since
$$
        |\tfrac12+it|^{-2}
        \ll(1+|N|)^{-2}
        \qquad
        (N-\tfrac12\le t\le N+\tfrac12)
$$
and $u^{2/3}$ is subadditive,
$$
\begin{aligned}
&\E\left[
\left(
\frac1{\log Y}
\int_{-\infty}^{\infty}
\left|
\frac{F_Y(\tfrac12+it)}{\tfrac12+it}
\right|^2\dee t
\right)^{2/3}
\right]\ll
(\log\log Y)^{-1/3}
\sum_{N\in\Z}(1+|N|)^{-4/3},
\end{aligned}
$$
and the series converges.  Bounded $Y$ are absorbed into the constant using
$$
        \E|F_Y(\tfrac12+it)|^2
        =\prod_{p\le Y}(1-1/p)^{-1}\ll\log Y.
$$
\end{proof}

\section{Fixed conditional block moments}\label{sec:block}
Recall that $\CF_y$ is the $\sigma$-algebra generated by the values $f(p)$, $p\le y$.  For $0\le j\le J$, abbreviate $F_j(s)=F_{y_j}(s)$, and define the tilted, normalised Euler product integrals
\begin{equation}\label{eq:Ij}
        I_j=\left(\frac{\log y_j}{\log y_0}\right)^{-1/L}
        \frac{1}{\log y_j}\int_{-\infty}^{\infty}
        \left|\frac{F_j(\tfrac12+it)}{\tfrac12+it}\right|^{2}\dee t,
\end{equation}
which are nonnegative, $\CF_{y_j}$-measurable, and almost surely finite: for fixed $j$ the finite Euler product is bounded in $t$, so the integrand is $O_{f,j}((1+t^2)^{-1})$.  The tilt $(\log y_j/\log y_0)^{-1/L}$ decreases slowly in $j$.  It changes $I_j$ by only a bounded factor over all $J$ blocks, since $(\log y_J/\log y_0)^{1/L}=e^{J/(\ell L)}\ll_K1$, and makes the resulting sequence a supermartingale.

For $1\le j\le J$, let
\begin{equation*}
        \CQ_j=\{y_{j-1}\}\cup
        \{q:\ y_{j-1}<q\le y_j,\ q\ \mathrm{prime}\}
\end{equation*}
denote the set of smoothness cutoffs interior to the $j$-th block, and define the block variables
\begin{equation*}
        U_j=\frac{1}{\log y_j}\int_0^\infty
        \max_{q\in\CQ_j}|\Psi_f(z,q)|^{2}\frac{\dee z}{z^{2}},
        \qquad
        U_0=\frac{1}{\log y_0}\int_0^\infty
        |\Psi_f(z,y_0)|^{2}\frac{\dee z}{z^{2}}.
\end{equation*}
As explained in Section \ref{sec:outline-new}, $U_j$ dominates the principal terms $M_\ell(x_i,y_j;f)$ simultaneously for all test points $x_i$; the maximum over $\CQ_j$ is the cost of this uniformity, and Doob's inequality will remove it.

The first lemma establishes the supermartingale property.  The proof is an accounting of three factors per block: integrating out the new block variables multiplies the integrand, in expectation, by the Euler factor $\prod_{y_{j-1}<p\le y_j}(1-1/p)^{-1}=\exp(1/\ell+o(1/(\ell L)))$; the normalisation $1/\log y_j$ contributes $e^{-1/\ell}$ by \eqref{eq:block-ratio}; and the tilt contributes $e^{-1/(\ell L)}$.  The product of the three is $\exp(-1/(\ell L)+o(1/(\ell L)))<1$.

\begin{lem}\label{lem:supermartingale}
For all sufficiently large $\ell$ in terms of $K$, $(I_j)_{0\le j\le J}$ is a nonnegative supermartingale with respect to
$(\CF_{y_j})_{0\le j\le J}$.
\end{lem}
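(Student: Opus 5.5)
The plan is to prove $\E[I_j\mid\CF_{y_{j-1}}]\le I_{j-1}$ for $1\le j\le J$; nonnegativity, adaptedness and almost sure finiteness are already recorded after \eqref{eq:Ij}. Integrability of each $I_j$ will come out of the same computation, since $\E I_0<\infty$ by the bound $\E|F_{y_0}(\tfrac12+it)|^2\ll\log y_0$, and the conditional bound then iterates.

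First I factor $F_j(s)=F_{j-1}(s)G_j(s)$, where
$$G_j(s)=\prod_{y_{j-1}<p\le y_j}\left(1-\frac{f(p)}{p^s}\right)^{-1}.$$
Here $F_{j-1}$ is $\CF_{y_{j-1}}$-measurable and $G_j$ is independent of $\CF_{y_{j-1}}$. Since the integrand is nonnegative, conditional Tonelli gives
$$\E\left[\int\left|\frac{F_j(\tfrac12+it)}{\tfrac12+it}\right|^2\dee t\,\Big|\,\CF_{y_{j-1}}\right]=\int\left|\frac{F_{j-1}(\tfrac12+it)}{\tfrac12+it}\right|^2\E|G_j(\tfrac12+it)|^2\dee t.$$
For each prime, $\E|1-f(p)p^{-1/2-it}|^{-2}=\sum_{h\ge0}p^{-h}=(1-1/p)^{-1}$. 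This follows from expanding in powers of $f(p)$ and using orthogonality: the $p^{-it}$ phase drops out by rotation invariance, and the expansion converges absolutely since $p^{-1/2}<1$. By independence, $\E|G_j(\tfrac12+it)|^2=\prod_{y_{j-1}<p\le y_j}(1-1/p)^{-1}$, independently of $t$.

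Next I evaluate this Euler factor. Taking logarithms gives $\sum_{y_{j-1}<p\le y_j}1/p+O(\sum_{p>y_0}p^{-2})$. Lemma~\ref{lem:quantitative-mertens} then gives
$$\log\log y_j-\log\log y_{j-1}+O\bigl(\exp(-c\sqrt{\log y_{j-1}})\bigr)=\frac1\ell+O\bigl(\exp(-c\sqrt{\log y_0})\bigr)$$
by \eqref{eq:block-ratio}. Since $\log y_0=2^{L-K\ell^{K-1}}$, the error is far smaller than $1/(\ell L)$. The final block needs care: $y_J$ may exceed $X_\ell$, but the definition of $y_j$ is the same, so \eqref{eq:block-ratio} still holds and nothing changes.

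Finally I collect the normalisations. We have $\log y_{j-1}/\log y_j=e^{-1/\ell}$, and the tilt ratio is
$$\left(\frac{\log y_j}{\log y_{j-1}}\right)^{-1/L}=e^{-1/(\ell L)}.$$
Therefore
$$\E[I_j\mid\CF_{y_{j-1}}]=I_{j-1}\exp\left(\frac1\ell+O(e^{-c\sqrt{\log y_0}})-\frac1\ell-\frac1{\ell L}\right)\le I_{j-1}$$
once $\ell$ is large in terms of $K$, because $e^{-c\sqrt{\log y_0}}=o(1/(\ell L))$.

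The main obstacle is the error comparison in the Mertens step. The margin is only $1/(\ell L)=\ell^{-K-1}$, so a crude Mertens error such as $O(1/\log y_{j-1})$ would still suffice but must be checked to be uniform in $j$. The doubly exponential size of $\log y_0$ makes this comfortable, and the $O(p^{-2})$ term is similarly negligible. Everything else is routine.
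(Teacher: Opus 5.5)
Your proposal is correct and follows the paper's proof essentially step for step. Conditional Tonelli with $\E|1-f(p)p^{-1/2-it}|^{-2}=(1-1/p)^{-1}$ gives $\E[I_j\mid\CF_{y_{j-1}}]=b_jI_{j-1}$; the quantitative Mertens estimate shows the block Euler factor $e^{1/\ell+O(e^{-c\sqrt{\log y_0}})}$ is beaten by the normalisation $e^{-1/\ell}$ and the tilt $e^{-1/(\ell L)}$; and integrability follows by induction from $\E I_0\ll1$. Your side remark is also right: even the classical Mertens error $O(1/\log y_{j-1})$ would suffice, since $1/\log y_0$ is far smaller than $1/(\ell L)$.
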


\begin{proof}
Tonelli and Mertens give
\begin{equation}\label{eq:I0-integrability}
        \E I_0=
        \frac{2\pi}{\log y_0}
        \prod_{p\le y_0}(1-1/p)^{-1}\ll1.
\end{equation}
Conditioning on $\CF_{y_{j-1}}$ and expanding the Euler factors belonging to $(y_{j-1},y_j]$,
$$
        \E\left|
        1-\frac{f(p)}{p^{1/2+it}}
        \right|^{-2}
        =
        \sum_{a\ge0}p^{-a}
        =(1-1/p)^{-1}.
$$
Conditional Tonelli is legitimate because the integrand is nonnegative.  Thus
\begin{equation}\label{eq:Ij-conditional-multiplier}
        \E[I_j\mid\CF_{y_{j-1}}]=b_jI_{j-1},
\end{equation}
where, exactly for every block including the last,
$$
        b_j=
        e^{-1/\ell}e^{-1/(\ell L)}
        \prod_{y_{j-1}<p\le y_j}(1-1/p)^{-1}.
$$
By Lemma \ref{lem:quantitative-mertens},
$$
        \sum_{y_{j-1}<p\le y_j}\frac1p
        =
        \frac1\ell+
        O(e^{-c\sqrt{\log y_0}})
$$
uniformly in $j$, while
$\sum_{p>y_{j-1}}p^{-2}\ll y_{j-1}^{-1}$.  Hence
$$
        \log b_j
        =
        -\frac1{\ell L}
        +O(e^{-c\sqrt{\log y_0}})
        +O(y_{j-1}^{-1})
        \le-\frac1{2\ell L}
$$
for all sufficiently large $\ell$.  Therefore $b_j\le1$.  Equation
\eqref{eq:I0-integrability} and induction show that every $I_j$ is integrable, so \eqref{eq:Ij-conditional-multiplier} proves the supermartingale property.
\end{proof}

\begin{prop}\label{prop:block-moment}
Let $r\ge2$ be fixed.  For all sufficiently large $\ell$ in terms of $K$, uniformly for $1\le j\le J$,
$$
        \E[U_j^r\mid\CF_{y_{j-1}}]
        \ll_r I_{j-1}^r
        \qquad\text{almost surely}.
$$
\end{prop}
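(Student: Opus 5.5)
The plan is to work conditionally on $\CF_{y_{j-1}}$ throughout. For the block values the prime values $f(p)$ with $p\le y_{j-1}$ are frozen coefficients, while the $f(p)$ with $y_{j-1}<p\le y_j$ remain independent and uniform on $\T$. For $q\in\CQ_j$ I would write, by unique factorisation,
$$
        \Psi_f(z,q)=\sum_{\substack{d:\ p\mid d\Rightarrow y_{j-1}<p\le q}} f(d)\,\Psi_f\!\left(\frac zd,y_{j-1}\right),
$$
with the coefficients $\Psi_f(z/d,y_{j-1})$ being $\CF_{y_{j-1}}$-measurable. The argument then has three steps.

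\textbf{Step 1: remove the maximum over $\CQ_j$.} Order $\CQ_j$ increasingly, starting at $y_{j-1}$. The increment from the prime preceding $q$ to $q$ is $\sum_{h\ge1}f(q)^h\Psi_f'(z/q^h,q)$, exactly as in \eqref{eq:lambda-two-recurrence}. Its coefficients are measurable before $f(q)$ is revealed, and $\E f(q)^h=0$. So, for fixed $z$, $(\Psi_f(z,q))_{q\in\CQ_j}$ is a martingale under the conditional law. Conditional Doob at exponent $2r$ (Lemma \ref{lem:doob-inequalities}) gives
$$
        \E\Bigl[\max_{q\in\CQ_j}|\Psi_f(z,q)|^{2r}\Bigm|\CF_{y_{j-1}}\Bigr]\ll_r \E\bigl[|\Psi_f(z,y_j)|^{2r}\bigm|\CF_{y_{j-1}}\bigr].
$$

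\textbf{Step 2: pass the $r$th moment inside the integral and apply hypercontractivity.} By Minkowski's integral inequality in the conditional $L^r$ norm,
$$
        \|U_j\|_{L^r(\cdot\mid\CF_{y_{j-1}})}\le\frac1{\log y_j}\int_0^\infty\bigl\|\max_{q}|\Psi_f(z,q)|\bigr\|_{2r}^2\frac{\dee z}{z^2}\ll_r\frac1{\log y_j}\int_0^\infty\|\Psi_f(z,y_j)\|_{2r}^2\frac{\dee z}{z^2}.
$$
Next I apply Lemma \ref{lem:hypercontractive} with $m=r$ to the block Steinhaus function, that is, to the $f(p)$ with $p\in(y_{j-1},y_j]$ and frozen coefficients $a_d=\Psi_f(z/d,y_{j-1})$. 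This bounds $\|\Psi_f(z,y_j)\|_{2r}^2$ by $\sum_d\tau_{2r-1}(d)|\Psi_f(z/d,y_{j-1})|^2$, summed over $d$ composed of block primes. This is the same block calculation as in Lemma \ref{lem:Nij}.

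\textbf{Step 3: integrate in $z$ and recover $I_{j-1}$.} Interchanging sum and integral and substituting $w=z/d$ gives
$$
        \int_0^\infty|\Psi_f(z/d,y_{j-1})|^2\frac{\dee z}{z^2}=\frac1d\int_0^\infty|\Psi_f(w,y_{j-1})|^2\frac{\dee w}{w^2}.
$$
The $d$-sum is then
$$
        \sum_d\frac{\tau_{2r-1}(d)}{d}=\prod_{y_{j-1}<p\le y_j}(1-1/p)^{-(2r-1)}=\exp\bigl((2r-1)/\ell+o(1)\bigr)\ll_r1,
$$
by Lemma \ref{lem:quantitative-mertens}. Lemma \ref{lem:parseval-dirichlet} with $\sigma=1/2$ turns the $w$-integral into $\frac1{2\pi}\int|F_{j-1}(\frac12+it)/(\frac12+it)|^2\dee t$. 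Using $\log y_{j-1}/\log y_j=e^{-1/\ell}$ and the fact that the tilt factor is $\asymp_K1$, the bound becomes $\|U_j\|_r\ll_r I_{j-1}$. Raising to the $r$th power proves the proposition.

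\textbf{Main obstacle.} The delicate step is justifying the conditional use of Lemma \ref{lem:hypercontractive} and of Doob. The index set of $d$ is infinite, but for each $z$ only $d\le z$ contribute, so the sums are finite. Lemma \ref{lem:hypercontractive} must be applied to a Steinhaus function in the block primes only, with $\CF_{y_{j-1}}$-measurable coefficients. I would handle this by working on the product space via Fubini, fixing the old coordinates. Measurability of the maximum is clear since $\CQ_j$ is finite. Finiteness of the right-hand side, needed for both Tonelli interchanges, follows from $I_{j-1}<\infty$ almost surely.
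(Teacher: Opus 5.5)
Your proposal is correct and follows essentially the same route as the paper. Both use the cutoff martingale along $\CQ_j$ with conditional Doob at exponent $2r$, then conditional Minkowski, then block hypercontractivity with frozen coefficients $\Psi_f(z/d,y_{j-1})$, the substitution $z=du$ producing $\prod_{y_{j-1}<p\le y_j}(1-1/p)^{-(2r-1)}\ll_r1$, and Parseval at $\sigma=1/2$; the paper additionally truncates the $z$-integral at $Z$ and passes to the limit by monotone convergence, which your direct use of Minkowski/Tonelli for nonnegative integrands makes unnecessary. One small sharpening: calling the tilt factor $\asymp_K1$ only gives a $K$-dependent constant, but the combined factor $\frac{\log y_{j-1}}{\log y_j}\bigl(\frac{\log y_{j-1}}{\log y_0}\bigr)^{1/L}=e^{-1/\ell}e^{(j-1)/(\ell L)}\le e^{J/(\ell L)}=e^{O(K/\ell)}$ is at most $2$ once $\ell$ is large in terms of $K$, so the implied constant depends only on $r$, as stated.
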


\begin{proof}
Fix $j$ and condition on $\CF_{y_{j-1}}$.  Reveal the primes in
$(y_{j-1},y_j]$ in increasing order.  If $p$ follows a cutoff $q$, then
$$
        \Psi_f(z,p)
        =
        \Psi_f(z,q)
        +\sum_{\substack{k\ge1\\p^k\le z}}
        f(p)^k\Psi_f(z/p^k,q).
$$
The powers of the newly revealed prime variable have mean zero, so $q\mapsto\Psi_f(z,q)$ is a martingale along $\CQ_j$.  Conditional Doob at exponent $2r$ gives
\begin{equation}\label{eq:doob-step}
        \E\left[
        \max_{q\in\CQ_j}|\Psi_f(z,q)|^{2r}
        \mid\CF_{y_{j-1}}
        \right]
        \ll_r
        \E\left[
        |\Psi_f(z,y_j)|^{2r}
        \mid\CF_{y_{j-1}}
        \right].
\end{equation}

For $Z\ge1$, let
$$
        U_{j,Z}=
        \frac1{\log y_j}
        \int_1^Z
        \max_{q\in\CQ_j}|\Psi_f(z,q)|^2\frac{\dee z}{z^2}.
$$
Conditional Minkowski and \eqref{eq:doob-step} give
\begin{equation}\label{eq:minkowski}
\begin{aligned}
        \E[U_{j,Z}^r\mid\CF_{y_{j-1}}]^{1/r}
        \ll_r
        \frac1{\log y_j}
        \int_1^Z
        \E[
        |\Psi_f(z,y_j)|^{2r}
        \mid\CF_{y_{j-1}}
        ]^{1/r}
        \frac{\dee z}{z^2}.
\end{aligned}
\end{equation}
Every $y_j$-smooth integer factors uniquely as $n=dm$, where every prime factor of $d$ lies in $(y_{j-1},y_j]$ and
$P(m)\le y_{j-1}$.  Hence
$$
        \Psi_f(z,y_j)
        =
        \sum_{\substack{d\le z\\
        p\mid d\Rightarrow y_{j-1}<p\le y_j}}
        f(d)\Psi_f(z/d,y_{j-1}).
$$
Applying Lemma \ref{lem:hypercontractive} conditionally to the prime coordinates in $(y_{j-1},y_j]$ gives
$$
\begin{aligned}
&\E[
|\Psi_f(z,y_j)|^{2r}
\mid\CF_{y_{j-1}}
]^{1/r}\le
\sum_{\substack{d\le z\\
p\mid d\Rightarrow y_{j-1}<p\le y_j}}
\tau_{2r-1}(d)
|\Psi_f(z/d,y_{j-1})|^2.
\end{aligned}
$$
Substitution into \eqref{eq:minkowski}, Tonelli, and $z=du$ yield
$$
\begin{aligned}
        \E[U_{j,Z}^r\mid\CF_{y_{j-1}}]^{1/r}
        &\ll_r
        \frac1{\log y_j}
        \left(
        \sum_{\substack{d\ge1\\
        p\mid d\Rightarrow y_{j-1}<p\le y_j}}
        \frac{\tau_{2r-1}(d)}d
        \right)
        \int_0^\infty
        |\Psi_f(u,y_{j-1})|^2\frac{\dee u}{u^2}.
\end{aligned}
$$
The divisor sum equals
$$
        \prod_{y_{j-1}<p\le y_j}
        (1-1/p)^{-(2r-1)}
        \ll_r1,
$$
uniformly in $j$, because the reciprocal-prime mass of the block is
$1/\ell+o(1/\ell)$.

Parseval gives
$$
        \int_0^\infty
        |\Psi_f(u,y_{j-1})|^2\frac{\dee u}{u^2}
        =
        \frac1{2\pi}
        \int_{-\infty}^{\infty}
        \left|
        \frac{F_{j-1}(\tfrac12+it)}
        {\tfrac12+it}
        \right|^2\dee t.
$$
Consequently,
\begin{align*}
&\frac1{\log y_j}
\int_0^\infty
|\Psi_f(u,y_{j-1})|^2\frac{\dee u}{u^2}=
\frac1{2\pi}
\frac{\log y_{j-1}}{\log y_j}
\left(\frac{\log y_{j-1}}{\log y_0}\right)^{1/L}
I_{j-1}.
\end{align*}
The residual factor is
$$
        e^{-1/\ell}
        \exp\left(\frac{j-1}{\ell L}\right).
$$
By \eqref{eq:J-count}, after increasing the lower threshold for $\ell$ in terms of $K$, this factor is at most $2$, uniformly for every
$1\le j\le J$, including the final block.  Thus the constant introduced here is absolute, not merely $K$-dependent.  Together with the divisor-product bound, this gives
$$
        \E[U_{j,Z}^r\mid\CF_{y_{j-1}}]^{1/r}
        \ll_r I_{j-1}
$$
uniformly in $Z$.  Since $U_{j,Z}\uparrow U_j$, conditional monotone convergence completes the proof.
\end{proof}

We now combine Lemmas \ref{lem:steinhaus-harper} and \ref{lem:supermartingale} with Proposition \ref{prop:block-moment} to bound the principal term of the smoothing inequality.  This proposition replaces the conditional first-moment step in \cite[Proposition 7.6]{Caich} by a fixed conditional $r$th-moment estimate on one prime block.
\begin{prop}\label{prop:principal}
There is a constant $C_r\ge1$ such that
$$
\sum_{\ell\ge2}
\P\left(
\sup_{X_{\ell-1}<x_i\le X_\ell}
\sup_{0\le j\le J}
M_\ell(x_i,y_j;f)
>
C_rT_1(\ell)L^{-1/2+1/r}
\right)<\infty,
$$
where
$$
        M_\ell(x_i,y_0;f)
        =
        \frac1{\log y_0}
        \int_0^{x_i/y_0}
        |\Psi_f(z,y_0)|^2\frac{\dee z}{z^2}.
$$
\end{prop}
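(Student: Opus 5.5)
First I reduce the principal terms to the block variables. For $1\le j\le J$ and $x_i/y_j\le z\le x_i/y_{j-1}$ we have $y_{j-1}\le x_i/z\le y_j$. Since $\Psi_f(z,w)$ is constant for $w$ between consecutive primes, $\Psi_f(z,x_i/z)=\Psi_f(z,q)$ for some $q\in\CQ_j$. Hence
$$
M_\ell(x_i,y_j;f)\le U_j
$$
for every test point $x_i$ in the $\ell$-th range. Similarly $M_\ell(x_i,y_0;f)\le U_0$. This removes the supremum over $i$ entirely, so it suffices to show that
$$
\sum_\ell\P\Bigl(\max_{0\le j\le J}U_j>C_rT_1(\ell)L^{-1/2+1/r}\Bigr)<\infty.
$$
For $j=0$, Parseval and \eqref{eq:Ij} give $U_0=I_0/(2\pi)$. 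So $U_0$ is covered by the bound for the supermartingale, which I derive next.

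The second step is a stopping region for $(I_j)$. Let $\lambda=T_1(\ell)L^{-1/2}$ and define the stopping time $\sigma=\min\{j:I_j>\lambda\}$. Doob's supermartingale inequality cannot be applied to $I_j$ at level $\lambda$ directly, because $\E I_0\asymp1$ would give only $1/\lambda$. Instead I apply it to $I_j^{2/3}$. Since $u^{2/3}$ is concave and increasing, Jensen shows $(I_j^{2/3})$ is again a nonnegative supermartingale, using Lemma \ref{lem:supermartingale}. The maximal inequality of Lemma \ref{lem:doob-inequalities} then gives
$$
\P\Bigl(\max_j I_j>\lambda\Bigr)\le \lambda^{-2/3}\E I_0^{2/3}.
$$
By Lemma \ref{lem:steinhaus-harper} with $Y=y_0$, and since $\log\log y_0\asymp L$, we have $\E I_0^{2/3}\ll L^{-1/3}$. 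Hence
$$
\P\Bigl(\max_j I_j>\lambda\Bigr)\ll (T_1(\ell)L^{-1/2})^{-2/3}L^{-1/3}=T_1(\ell)^{-2/3}.
$$
Because $T_1(\ell)=\ell^9/\log\ell$, this is summable in $\ell$. This also disposes of $U_0$.

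The third step handles the blocks inside the stopping region. For $1\le j\le J$, the event $\{I_{j-1}\le\lambda\}$ is $\CF_{y_{j-1}}$-measurable. Conditional Markov and Proposition \ref{prop:block-moment} give
$$
\P\bigl(U_j>C_r\lambda L^{1/r},\ I_{j-1}\le\lambda\bigr)\le \frac{\E[\ind_{I_{j-1}\le\lambda}\E[U_j^r\mid\CF_{y_{j-1}}]]}{C_r^r\lambda^rL}\ll_r \frac{1}{C_r^rL}.
$$
The union over $j\le J\ll_K L$ then contributes $O_{r,K}(1)$, which is bounded but not small. Summability therefore needs an extra saving, and this is the main obstacle.

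To overcome it I would not bound $I_{j-1}$ by the fixed threshold. Instead I keep its size:
$$
\P(U_j>C_r\lambda L^{1/r},\ I_{j-1}\le\lambda)\ll_r \frac{\E[\min(I_{j-1},\lambda)^r]}{C_r^r\lambda^rL^{\,}}\le \frac{\E[\min(I_{j-1},\lambda)^{2/3}]}{\lambda^{2/3}L}.
$$
By the supermartingale property of $I^{2/3}$, each expectation $\E[I_{j-1}^{2/3}]\le\E I_0^{2/3}\ll L^{-1/3}$. Summing over $J\ll L$ blocks gives a total of $\ll\lambda^{-2/3}L^{-1/3}=T_1(\ell)^{-2/3}$, which is again summable. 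Adding the exceptional probability $\P(\max_jI_j>\lambda)$ from the second step and summing over $\ell$ completes the argument. The powers of $L$ should be checked carefully at this point: the $L^{1/r}$ in the threshold exactly cancels the $L^{-1}$ from $r$th-moment Markov against the union over $J$ blocks, and the factor $T_1(\ell)^{-2/3}$ provides the summability in $\ell$.
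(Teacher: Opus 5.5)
Your proof is correct, but it gets the final saving by a different route from the paper.

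The paper does not stop at the natural scale $\lambda=T_1(\ell)L^{-1/2}$. It stops at the lower level $T_1(\ell)^{1/2}L^{-1/2}$, and the two pieces are then handled as follows:
\begin{itemize}
\item Leaving this region costs $T_1(\ell)^{-1/3}$, by Doob applied to $(I_j^{2/3})$.
\item Inside the region, the realised pointwise cap on $I_{j-1}$ makes each conditional Markov probability at most $T_1(\ell)^{-r/2}L^{-1}$. The union over $J\ll_K L$ blocks then gives $T_1(\ell)^{-r/2}$.
\end{itemize}
So the paper splits the factor $T_1(\ell)$ between the stopping step and the Markov step. It never needs an expectation of $I_{j-1}$ for $j\ge2$; only $\E I_0^{2/3}$ enters, through the maximal inequality.

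You instead stop at $\lambda$ itself, which costs $T_1(\ell)^{-2/3}$ by the same Doob argument. You then recover the missing saving inside the region through two further steps:
\begin{itemize}
\item The truncation $(I_{j-1}/\lambda)^r\ind_{\{I_{j-1}\le\lambda\}}\le (I_{j-1}/\lambda)^{2/3}$.
\item The bound $\E I_{j-1}^{2/3}\le \E I_0^{2/3}\ll L^{-1/3}$.
\end{itemize}
That second bound uses the supermartingale property of $(I_j^{2/3})$ a second time, in expectation for every block, rather than only through the maximal inequality. The sum over $J\ll L$ blocks is then $\ll \lambda^{-2/3}L^{-1/3}=T_1(\ell)^{-2/3}$.

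What each approach buys:
\begin{itemize}
\item Your version gives the slightly stronger bound $T_1(\ell)^{-2/3}$ for both pieces. It would therefore tolerate any $T_1$ with $\sum_\ell T_1(\ell)^{-2/3}<\infty$, whereas the paper needs $\sum_\ell T_1(\ell)^{-1/3}<\infty$. This is irrelevant here, since $T_1(\ell)=\ell^9/\log\ell$.
\item The paper's version is a little simpler: only the realised value of $I_{j-1}$ on an $\CF_{y_{j-1}}$-measurable event is used. This is the point emphasised in the outline, where $I_{j-1}$ is bounded ``without requiring any estimate for $\E I_{j-1}^r$''.
\end{itemize}
Your truncation keeps that feature with respect to $r$th moments, since you too never need $\E I_{j-1}^r$. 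Both arguments are complete.
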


\begin{proof}
For $1\le j\le J$ and
$x_i/y_j\le z\le x_i/y_{j-1}$, the cutoff $x_i/z$ lies in
$[y_{j-1},y_j]$.  Since a smooth sum is constant between consecutive primes,
$$
        M_\ell(x_i,y_j;f)\le U_j.
$$
Also $M_\ell(x_i,y_0;f)\le U_0$, and Parseval gives
$$
        U_0=\frac1{2\pi}I_0.
$$

Set
$$
        \CS_\ell=
        \left\{
        \max_{0\le j\le J}I_j
        \le T_1(\ell)^{1/2}L^{-1/2}
        \right\}.
$$
Since $u^{2/3}$ is increasing and concave,
$(I_j^{2/3})$ is a nonnegative supermartingale.  Lemma
\ref{lem:steinhaus-harper}, applied with $Y=y_0$, gives
\begin{equation}\label{eq:I0-two-thirds}
        \E I_0^{2/3}\ll_KL^{-1/3}.
\end{equation}
Doob's supermartingale inequality therefore yields
$$
        \P(\bar\CS_\ell)
        \ll_KT_1(\ell)^{-1/3}.
$$
Since $T_1(\ell)=\ell^9/\log\ell$, these probabilities are summable.

Let
$$
        H_{\ell,r}=T_1(\ell)L^{-1/2+1/r}.
$$
Choose $A_r$ so that Proposition \ref{prop:block-moment} gives
$$
        \E[U_j^r\mid\CF_{y_{j-1}}]\le A_rI_{j-1}^r,
$$
and take $C_r^r\ge A_r$.  Let
$$
        \CS_{\ell,j-1}
        =
        \left\{I_{j-1}\le T_1(\ell)^{1/2}L^{-1/2}\right\}.
$$
Since $\CS_{\ell,j-1}\in\CF_{y_{j-1}}$, conditional Markov gives
$$
\begin{aligned}
&\ind_{\CS_{\ell,j-1}}
\P(U_j>C_rH_{\ell,r}\mid\CF_{y_{j-1}})\\
&\qquad\le
\ind_{\CS_{\ell,j-1}}
\frac{A_rI_{j-1}^r}{C_r^rH_{\ell,r}^r}
\le T_1(\ell)^{-r/2}L^{-1}.
\end{aligned}
$$
Because $J\ll_KL$,
$$
        \P\left(
        \{\max_{1\le j\le J}U_j>C_rH_{\ell,r}\}
        \cap\CS_\ell
        \right)
        \ll_{r,K}T_1(\ell)^{-r/2},
$$
which is summable.  On $\CS_\ell$,
$$
        U_0\le T_1(\ell)^{1/2}L^{-1/2}
        \le H_{\ell,r}
$$
for all sufficiently large $\ell$.  Combining these bounds proves the proposition.
\end{proof}

Substituting Proposition \ref{prop:principal} into Proposition
\ref{prop:caich-smoothing} gives the required quadratic-variation bound.
\begin{prop}\label{prop:variance}
There is a constant $C_*=C_*(r,K,c_0,m_0)$ such that
$$
\sum_{\ell\ge1}
\P\left(
\sup_{X_{\ell-1}<x_i\le X_\ell}
\frac{V_\ell(x_i;f)}{x_i}
>
C_*T(\ell)L^{-1/2+1/r}
\right)<\infty.
$$
\end{prop}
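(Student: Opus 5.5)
The plan is to combine the deterministic smoothing inequality \eqref{eq:smoothing} of Proposition \ref{prop:caich-smoothing} with the probabilistic bounds in \eqref{eq:auxiliary-events} and Proposition \ref{prop:principal}, working outside a union of exceptional events whose probabilities are summable in $\ell$. Fix $\ell$ large and define the exceptional event $\CE_\ell$ as the union of the following five events:
\begin{enumerate}
\item the event in Proposition \ref{prop:principal}, at threshold $C_rT_1(\ell)L^{-1/2+1/r}$;
\item the two $\lambda^{(k)}$ events, $k=2,3$, at threshold $T_1(\ell)L^{-1/2}$;
\item the $L_\ell^{(12)}$ event and the $L_\ell^{(2)}$ event, at threshold $T(\ell)L^{-1/2}$;
\item the $W_\ell$ event, at threshold $L^{-1/2}$.
\end{enumerate}
By Proposition \ref{prop:principal} and \eqref{eq:auxiliary-events}, $\sum_\ell\P(\CE_\ell)<\infty$. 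Finitely many small $\ell$, where \eqref{eq:smoothing} is not yet available, do not affect summability.

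On the complement of $\CE_\ell$, for every test point $X_{\ell-1}<x_i\le X_\ell$, we insert these bounds into \eqref{eq:smoothing}. The key arithmetic is that $\ell\log\ell\cdot T_1(\ell)=T(\ell)$. The principal term is therefore at most $\ell\log\ell\cdot C_rT_1(\ell)L^{-1/2+1/r}=C_rT(\ell)L^{-1/2+1/r}$. The two $\lambda$-terms contribute at most $2T(\ell)L^{-1/2}$. The boundary terms contribute at most $2T(\ell)L^{-1/2}$, and the $W$-term contributes at most $L^{-1/2}$. Since $L^{-1/2}\le L^{-1/2+1/r}$ and $T(\ell)\ge1$, all of these are $\le T(\ell)L^{-1/2+1/r}$. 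So, with $C$ the implied constant in \eqref{eq:smoothing} (depending on $K,c_0,m_0$), we get $V_\ell(x_i;f)/x_i\le C(C_r+5)T(\ell)L^{-1/2+1/r}$ uniformly in $i$. Taking $C_*=C(C_r+5)$ shows that the event in Proposition \ref{prop:variance} is contained in $\CE_\ell$ for all large $\ell$, and summability follows.

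One point needs care. The supremum over $1\le j\le J$ in \eqref{eq:smoothing} is dominated by the supremum over $0\le j\le J$ in Proposition \ref{prop:principal}, and the $\lambda$-suprema in \eqref{eq:smoothing} are over $0\le j\le J$, which matches \eqref{eq:auxiliary-events}. The main obstacle has already been handled upstream, in Proposition \ref{prop:principal}. Here the remaining task is bookkeeping: the implied constant in \eqref{eq:smoothing} must be uniform in $i$ and $\ell$, and the range of $\ell$ for which \eqref{eq:smoothing} holds must be compatible with the ranges in the cited estimates.
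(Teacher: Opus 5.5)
Your proposal is correct and matches the paper's proof: you discard the summable exceptional events of Proposition \ref{prop:principal} and \eqref{eq:auxiliary-events}, then insert those thresholds into \eqref{eq:smoothing} using $\ell\log\ell\cdot T_1(\ell)=T(\ell)$ and $L^{-1/2}\le L^{-1/2+1/r}$. Your explicit constant $C_*=C(C_r+5)$ and the remark about finitely many small $\ell$ just spell out what the paper leaves implicit.
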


\begin{proof}
Outside the summable exceptional event of Proposition \ref{prop:principal},
$$
        \ell\log\ell\sup_{1\le j\le J}
        M_\ell(x_i,y_j;f)
        \ll_r
        T(\ell)L^{-1/2+1/r}.
$$
Outside the events in \eqref{eq:auxiliary-events}, the two $\lambda$-terms, after multiplication by $\ell\log\ell$, and the terms
$L_\ell^{(12)}$, $L_\ell^{(2)}$, and $W_\ell/x_i$ are all
$O(T(\ell)L^{-1/2})$.  Since
$L^{-1/2}\le L^{-1/2+1/r}$, \eqref{eq:smoothing} proves the claim.
\end{proof}

\section[The Steinhaus case of Proposition \ref{prop:fixed-r}]{The Steinhaus case of Proposition \ref{prop:fixed-r}}\label{sec:fixedr}

\begin{proof}[Proof of the Steinhaus case of Proposition \ref{prop:fixed-r}]
Let
$$
        \alpha=\frac14+\frac1{2r}+\eta,
        \qquad
        R(x)=(\log\log x)^\alpha.
$$
Choose $K$ as in \eqref{eq:K-condition}, choose $c_0$ from Lemma
\ref{lem:sparse-points} with $A=1$, and then choose $m_0>1/c_0$.
All finitely many excluded values of $\ell$ are absorbed into the random implied constant.

By \eqref{eq:largest-prime}, \eqref{eq:reduction-goal}, and
Lemma \ref{lem:B0-B2}, it remains to control $\CB_\ell^{(1)}$.  For
$X_{\ell-1}<x_i\le X_\ell$, let
$$
        \Sigma_{\ell,i}=
        \left\{
        V_\ell(x_i;f)
        \le C_*x_iT(\ell)L^{-1/2+1/r}
        \right\}.
$$
Proposition \ref{prop:variance} gives
\begin{equation}\label{eq:variance-union}
        \sum_{\ell\ge1}
        \P\left(
        \bigcup_{X_{\ell-1}<x_i\le X_\ell}
        \bar\Sigma_{\ell,i}
        \right)<\infty.
\end{equation}

Enumerate the primes in $(y_0,y_J]$ as
$p_1<\cdots<p_N$, set
$$
        \CH_0=\CF_{y_0},
        \qquad
        \CH_k=\CF_{y_0}\vee
        \sigma(f(p_h):1\le h\le k),
$$
and define
$$
        Z_k=f(p_k)\Psi_f'\left(\frac{x_i}{p_k},p_k\right),
        \qquad
        S_k=
        \left|\Psi_f'\left(\frac{x_i}{p_k},p_k\right)\right|.
$$
The strict condition $P(n)<p_k$ makes $S_k$ $\CH_{k-1}$-measurable even when $p_k<\sqrt{x_i}$.  Moreover $S_k\le x_i/p_k$, so every $Z_k$ is bounded, and
$$
        \E[Z_k\mid\CH_{k-1}]=0.
$$
Moreover,
$$
        \sum_{k=1}^NS_k^2=V_\ell(x_i;f),
        \qquad
        \sum_{k=1}^NZ_k=M_f^{(1)}(x_i).
$$
Lemma \ref{lem:stopped-hoeffding}, with
$$
        u=\sqrt{x_i}R(x_i),
        \qquad
        T_i=C_*x_iT(\ell)L^{-1/2+1/r},
$$
gives
$$
\begin{aligned}
&\P\left(
|M_f^{(1)}(x_i)|\ge\sqrt{x_i}R(x_i),\
\Sigma_{\ell,i}
\right)\\
&\qquad\le
2\exp\left(
-\frac{R(x_i)^2}
{10C_*T(\ell)L^{-1/2+1/r}}
\right).
\end{aligned}
$$
By \eqref{eq:L-comparability},
$$
        \frac{R(x_i)^2}
        {T(\ell)L^{-1/2+1/r}}
        \asymp
        \frac{L^{1+2\eta}}{T(\ell)}
        =
        \ell^{K(1+2\eta)-10}.
$$
There are at most $\exp(C_{c_0}\ell^K)$ indices $i$ satisfying
$X_{\ell-1}<x_i\le X_\ell$.  Hence
$$
\begin{aligned}
        \P(\CB_\ell^{(1)})
        &\le
        \P\left(
        \bigcup_{X_{\ell-1}<x_i\le X_\ell}
        \bar\Sigma_{\ell,i}
        \right)+
        2\exp\left(
        C_{c_0}\ell^K-c\ell^{K(1+2\eta)-10}
        \right).
\end{aligned}
$$
The second term is summable because $2K\eta>10$, and the first is summable by \eqref{eq:variance-union}.

The three terms in \eqref{eq:largest-prime} now satisfy
$$
        |M_f(x_i)|\le3\sqrt{x_i}R(x_i)
$$
almost surely for all sufficiently large $i$.  Lemma
\ref{lem:sparse-points} and the interpolation following
\eqref{eq:reduction-goal} extend this estimate from the test points to all large $x$, proving the Steinhaus case.
\end{proof}

\section{The Rademacher case}\label{sec:rademacher}

Let $f$ be given by \eqref{eq:rademacher-definition}, and retain the definitions of $M_f$, $\Psi_f$, $\Psi_f'$, $x_i$, $X_\ell$, $y_j$, $J$ and $L$.
We start by proving the Rademacher form of Lemma \ref{lem:hypercontractive}.
\begin{lem}\label{lem:rademacher-hypercontractive}
Let $m$ be a positive integer, and let $(a_n)$ be a sequence of complex numbers supported on squarefree integers such that
$\sum_n|a_n|^2\tau_{2m-1}(n)<\infty$.  The random series
$\sum_na_nf(n)$ is understood as the $L^2$ limit of its finite partial sums.  Then
$$
        \E\left|\sum_na_nf(n)\right|^{2m}
        \le
        \left(\sum_n|a_n|^2\tau_{2m-1}(n)\right)^m.
$$
More generally, let $\CP$ be a finite set of primes, let $\CG$ be independent of $(f(p))_{p\in\CP}$, and let $(a_d)$ be $\CG$-measurable and supported on squarefree integers composed only of primes in $\CP$.  Then
\begin{equation}\label{eq:rademacher-conditional-hypercontractive}
        \E\left[
        \left|\sum_da_df(d)\right|^{2m}
        \middle|\CG
        \right]
        \le
        \left(\sum_d|a_d|^2\tau_{2m-1}(d)\right)^m.
\end{equation}
\end{lem}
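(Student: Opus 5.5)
We will deduce the Rademacher inequality from the Bonami hypercontractive inequality on the discrete cube, by the same route that gives Lemma \ref{lem:hypercontractive} in the Steinhaus case. The random variables $f(p)$ are independent Rademacher signs. For squarefree $n$ we have $f(n)=\prod_{p\mid n}f(p)$, which is a Walsh monomial of degree $\omega(n)$. So a finite sum $\sum_n a_nf(n)$ is a polynomial in independent symmetric signs that is multilinear in each coordinate. This is exactly the setting of Bonami's product-space inequality.

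\textbf{Step 1 (one coordinate at a time).} We first treat finitely supported $(a_n)$ and induct on the number of primes involved. Fix the largest prime $p$ in the support and write
$$
\sum_na_nf(n)=A+f(p)B,
$$
where $A$ and $B$ do not involve $f(p)$. The key one-variable inequality is the two-point inequality
$$
\E_{\epsilon}|a+\epsilon b|^{2m}\le\bigl(|a|^2+(2m-1)|b|^2\bigr)^m,
$$
valid for complex $a,b$ and a symmetric sign $\epsilon$. It holds for complex coefficients by the standard Bonami--Beckner two-point argument, using $\E|a+\epsilon b|^{2m}=\E\bigl(|a|^2+|b|^2+2\epsilon\,\mathrm{Re}(a\bar b)\bigr)^m$ and comparing coefficients.

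Applying this conditionally on the other primes, and then Minkowski's inequality in $L^{m}$ (since $m\ge1$), gives
$$
\bigl\|\,|A+f(p)B|^2\bigr\|_m\le\|A\|_{2m}^2+(2m-1)\|B\|_{2m}^2.
$$
Iterating over the primes yields
$$
\E\Bigl|\sum a_nf(n)\Bigr|^{2m}\le\Bigl(\sum_n|a_n|^2(2m-1)^{\omega(n)}\Bigr)^m.
$$
For squarefree $n$ we have $(2m-1)^{\omega(n)}=\tau_{2m-1}(n)$, which is exactly the required weight. I expect the main care to be needed in making this Minkowski iteration rigorous with complex coefficients and conditional expectations. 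An alternative is to cite Bonami's inequality directly, with hypercontractive constant $\sqrt{2m-1}$ at degree $\omega(n)$, which gives the same weight.

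\textbf{Step 2 (infinite sums).} We pass to the $L^2$ limit. Since $\tau_{2m-1}\ge1$, the hypothesis gives $\sum|a_n|^2<\infty$, so the partial sums converge in $L^2$ by orthogonality \eqref{eq:rademacher-orthogonality}. Along a subsequence they converge almost surely, and Fatou's lemma then transfers the bound from Step 1, since the right-hand sides of the finite bounds increase to the full sum.

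\textbf{Step 3 (conditional version).} For \eqref{eq:rademacher-conditional-hypercontractive}, the coefficients are $\CG$-measurable and $\CG$ is independent of $(f(p))_{p\in\CP}$. We therefore compute the conditional expectation by freezing the coefficients: for each fixed value of $(a_d)$, apply the finite case of Step 1 on the finite product space indexed by $\CP$. This is legitimate by the independence (substitution/freezing) lemma for conditional expectations, applied to the nonnegative measurable function $(a,\epsilon)\mapsto|\sum a_d\epsilon_d|^{2m}$. No limiting argument is needed here, because $\CP$ is finite.
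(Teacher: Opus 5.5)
Your proposal is correct and follows essentially the same route as the paper: Bonami's inequality for multilinear polynomials in independent signs, with weight $(2m-1)^{\omega(n)}=\tau_{2m-1}(n)$ on squarefree $n$; extension to infinite sums via $L^2$ convergence of the partial sums, an almost surely convergent subsequence, and Fatou; and the conditional version by freezing the $\CG$-measurable coefficients. The only difference is that you sketch a proof of Bonami's inequality, which is valid, whereas the paper simply cites it. Your sketch uses the two-point inequality, reduced to $|a|,|b|$ via $|\mathrm{Re}(a\bar b)|\le|a||b|$, followed by Minkowski in $L^m$ and induction over the primes.
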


\begin{proof}
Bonami's inequality for finite multilinear polynomials in independent signs gives
$$
        \E\left|\sum_na_nf(n)\right|^{2m}
        \le
        \left(
        \sum_n(2m-1)^{\omega(n)}|a_n|^2
        \right)^m.
$$
For squarefree $n$,
$\tau_{2m-1}(n)=(2m-1)^{\omega(n)}$.  For the general sequence in the first assertion, orthogonality shows that the finite partial sums converge in $L^2$; applying the finite inequality to these partial sums and then using an almost surely convergent subsequence and Fatou's lemma gives the stated bound.  Freezing $\CG$ gives the conditional statement pointwise on the remaining sign space.
\end{proof}

The proof of Lemma \ref{lem:sparse-points} is unchanged: its $2m$th-moment estimate becomes
$$
        \E\left|\sum_{n\in I}f(n)\right|^{2m}
        \le
        \left(
        \sum_{n\in I}\mu^2(n)\tau_{2m-1}(n)
        \right)^m
        \le
        \left(
        \sum_{n\in I}\tau_{2m-1}(n)
        \right)^m.
$$
The largest-prime decomposition is simpler because there is no repeated-prime term.
\begin{lem}\label{lem:rademacher-decomposition}
For every $X_{\ell-1}<x_i\le X_\ell$,
\begin{equation}\label{eq:rademacher-largest-prime}
        M_f(x_i)=
        \Psi_f(x_i,y_0)
        +\sum_{y_0<p\le y_J}
        f(p)\Psi_f'\left(\frac{x_i}{p},p\right).
\end{equation}
\end{lem}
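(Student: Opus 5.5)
The identity is purely combinatorial, so I would prove it by sorting the integers $n\le x_i$ according to their largest prime factor, as in the first part of Lemma \ref{lem:decomposition}. In the Rademacher model $f(n)=0$ unless $n$ is squarefree, so only squarefree $n\le x_i$ contribute to $M_f(x_i)$. Since $y_J\ge X_\ell\ge x_i$, every $n\le x_i$ satisfies exactly one of two alternatives: either $P(n)\le y_0$, or $P(n)=p$ for a unique prime $p\in(y_0,y_J]$. The integers of the first kind contribute exactly $\Psi_f(x_i,y_0)$; this includes $n=1$, and the non-squarefree such $n$ contribute zero on both sides.

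For the second kind, fix a prime $p\in(y_0,y_J]$. A squarefree $n$ with $P(n)=p$ has $v_p(n)=1$, so it can be written uniquely as $n=pm$ with $m\le x_i/p$ squarefree and $P(m)<p$. The inequality is strict because $p\nmid m$. Conversely, take any $m\le x_i/p$ with $P(m)<p$. If $m$ is squarefree, then $pm$ is squarefree with $P(pm)=p$ and $pm\le x_i$. If $m$ is not squarefree, then $f(m)=0$, and also $f(pm)=0$ because $pm$ is not squarefree. So the map $m\mapsto pm$ is a bijection between the nonzero terms on the two sides.

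It remains to check the weights. Since $p\nmid m$, the integers $p$ and $m$ are coprime, so by \eqref{eq:rademacher-definition}
$$
f(pm)=\mu^2(pm)\prod_{q\mid pm}f(q)=f(p)\,\mu^2(m)\prod_{q\mid m}f(q)=f(p)f(m).
$$
Both sides vanish when $m$ is not squarefree. Summing over $m$ shows that the contribution of $n$ with $P(n)=p$ equals $f(p)\Psi_f'(x_i/p,p)$. Summing over $p\in(y_0,y_J]$ and adding the smooth part gives \eqref{eq:rademacher-largest-prime}.

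There is no real obstacle here. The only point that needs care is that there is no analogue of $M_f^{(2)}$: integers whose largest prime factor occurs with multiplicity at least two are not squarefree, so $f$ vanishes on them. Note that $f$ is only multiplicative, not completely multiplicative. This is why the factorisation step must use the coprimality of $p$ and $m$ rather than the argument from the Steinhaus case.
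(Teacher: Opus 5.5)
Your proof is correct and follows the same route as the paper: restrict to squarefree $n$, separate $P(n)\le y_0$ from $P(n)\in(y_0,y_J]$, and write $n=pm$ with $P(m)<p$ and $f(pm)=f(p)f(m)$. You spell out the bijection, the vanishing of non-squarefree terms, and the use of coprimality (rather than complete multiplicativity) in more detail than the paper's three-line argument.
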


\begin{proof}
Every $n$ with $f(n)\ne0$ is squarefree.  If $P(n)>y_0$, write
$n=pm$ with $p=P(n)$.  Then $P(m)<p$ and
$f(n)=f(p)f(m)$.  This representation is unique, and no repeated-prime term remains.
\end{proof}

Write the prime sum in \eqref{eq:rademacher-largest-prime} as $M_f^{(1)}(x_i)$ and define
\begin{equation*}
        V_\ell(x_i;f)=\sum_{y_0<p\le y_J}
        \left|\Psi_f'\left(\frac{x_i}{p},p\right)\right|^2.
\end{equation*}
The smooth term is controlled by the first part of Lemma \ref{lem:B0-B2}.  Indeed, by \eqref{eq:rademacher-orthogonality},
\begin{equation*}
        \E|\Psi_f(x_i,y_0)|^2
        =\#\{n\le x_i:\mu^2(n)=1,\ P(n)\le y_0\}
        \le\#\{n\le x_i:P(n)\le y_0\},
\end{equation*}
so the same smooth-number estimate, Chebyshev inequality, union bound, and Borel--Cantelli argument apply.  The estimates of Section \ref{sec:repeated} devoted to $M_f^{(2)}$ are not needed.

\subsection{Transfer of the smoothing estimates}

Use the definitions of $M_\ell$, $\lambda_\ell^{(2)}$, $\lambda_\ell^{(3)}$, $L_\ell^{(12)}$, $L_\ell^{(2)}$, and $W_\ell$ from Section \ref{sec:smoothing}, with the Rademacher smooth sums.  The following is the Rademacher analogue of Proposition \ref{prop:caich-smoothing}.

\begin{prop}\label{prop:rademacher-smoothing}
For all sufficiently large $\ell$ and all $X_{\ell-1}<x_i\le X_\ell$,
\begin{equation}\label{eq:rademacher-smoothing}
\begin{aligned}
        \frac{V_\ell(x_i;f)}{x_i}
        \ll&\ \ell\log\ell\sup_{1\le j\le J}M_\ell(x_i,y_j;f)
        +\ell\log\ell\sum_{k=2}^{3}\sup_{0\le j\le J}
        \lambda_\ell^{(k)}(x_i,y_j;f)\\
        &+L_\ell^{(12)}(x_i;f)+L_\ell^{(2)}(x_i;f)
        +\frac{W_\ell(x_i;f)}{x_i}.
\end{aligned}
\end{equation}
Moreover, the summability estimates in \eqref{eq:auxiliary-events} hold for the Rademacher model.
\end{prop}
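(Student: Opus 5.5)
The plan is to go through the Steinhaus proof of Proposition \ref{prop:caich-smoothing} and check that every step uses only three features of $f$. The first is the martingale structure in the prime variables. The second is orthogonality, which for Rademacher is \eqref{eq:rademacher-orthogonality}. The third is the hypercontractive bound, which here is Lemma \ref{lem:rademacher-hypercontractive}.

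\textbf{The pointwise inequality \eqref{eq:rademacher-smoothing}.} Caich's derivation in \cite[Section~7.1]{Caich} is deterministic. It uses only three ingredients:
\begin{itemize}
\item the decomposition of $V_\ell$ into prime blocks;
\item the replacement of the sum over $p$ by a short integral over $t\in[p,p(1+1/X)]$;
\item the identity $\Psi_f'(x_i/p,p)=\Psi_f'(x_i/p,x_i/t,p)+\Psi_f'(x_i/t,p)$, followed by rewriting $\Psi_f'(z,p)$ in terms of $\Psi_f(z,x_i/z)$ and the differences that define $\lambda^{(2)}$, $\lambda^{(3)}$ and the edge terms.
\end{itemize}
None of these uses the values of $f$. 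They hold for any arithmetic function, including one supported on squarefrees. So \eqref{eq:rademacher-smoothing} follows exactly as in the Steinhaus case. The short reciprocal-prime estimates come from Lemma \ref{lem:short-reciprocal-primes}, which is purely arithmetic.

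\textbf{The summability statements.} I would redo each one in turn.

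For $\lambda^{(2)}$: the increment \eqref{eq:lambda-two-recurrence} now has only the term $h=1$, namely $f(p)\Psi_f'(z/p,p)$. It is still a martingale difference, since $\E f(p)=0$ and $f(p)$ is independent of $\CF_{p^-}$. Conditional Doob at exponent $4$ then gives \eqref{eq:lambda-two-doob}. The fourth moment is bounded by Lemma \ref{lem:rademacher-hypercontractive} with $m=2$ by $(\sum\mu^2(n)\tau_3(n))^2\le(\sum\tau_3(n))^2$. From there, \eqref{eq:lambda-two-fourth} and the weighted Cauchy--Schwarz argument go through verbatim. The case $\lambda^{(3)}$ is the same estimate without the Doob step.

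For $W_\ell$: I would apply Minkowski's inequality and Lemma \ref{lem:rademacher-hypercontractive}. The coefficients are finitely supported, so the $L^2$-limit convention causes no trouble. This gives the majorant \eqref{eq:W-sequence} with the extra weight $\mu^2(n)\le1$, and hence \eqref{eq:W-moment}.

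For $L^{(12)}$ and $L^{(2)}$: only second moments are needed. Here $\E|\Psi_f'(z,p)|^2=\#\{n\le z:\mu^2(n)=1,P(n)<p\}$, which is at most $\Psi(z,p)$ and at most $z$. So \eqref{eq:L12-smooth-decay}, \eqref{eq:L12-expectation} and \eqref{eq:L2-expectation} hold unchanged. The Markov and union-bound calculations are then identical.

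\textbf{Main obstacle.} The only delicate point is the martingale and Doob step in $\lambda^{(2)}$. With real signs, the Steinhaus argument that $\E f(p)^h=0$ for all $h\ge 1$ fails, because $f(p)^2=1$. This is harmless here because the Rademacher function has squarefree support, so the powers $h\ge2$ never occur. I would state this explicitly. I would also check measurability of the supremum, which holds because the inner sum is a step function of $q$, just as before.
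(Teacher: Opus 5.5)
Your proposal is correct and follows essentially the same route as the paper. The pointwise inequality transfers because Caich's derivation is deterministic. For $\lambda^{(2)}$, squarefree support reduces the martingale increment to the single term $f(p)\Psi_f'(z/p,p)$, so the Doob step still applies. The bounds for $\lambda^{(3)}$, $W_\ell$, $L_\ell^{(12)}$ and $L_\ell^{(2)}$ then follow from Lemma \ref{lem:rademacher-hypercontractive} and Rademacher orthogonality, each divisor sum acquiring the harmless extra weight $\mu^2(n)\le1$.
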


\begin{proof}
The deterministic decomposition leading to \eqref{eq:smoothing} uses the largest-prime representation of the prime-martingale term and deterministic inequalities for the resulting smooth sums.  Applying it to \eqref{eq:rademacher-largest-prime} gives \eqref{eq:rademacher-smoothing} with the same auxiliary quantities.

We verify the auxiliary estimates in the order used in Proposition \ref{prop:caich-smoothing}.  In the proof of Lemma \ref{lem:lambda-two}, if $p$ follows the preceding prime $p^-$, squarefree support replaces \eqref{eq:lambda-two-recurrence} by
\begin{equation*}
        A_p(z)-A_{p^-}(z)
        =f(p)\Psi_f'\left(\frac{z}{p},p\right).
\end{equation*}
The coefficient is measurable before $f(p)$ is revealed and the increment has conditional mean zero.  Conditional Doob gives \eqref{eq:lambda-two-doob}.  Lemma \ref{lem:rademacher-hypercontractive} gives
\begin{equation*}
        \E\left|\sum_{\substack{n\le z\\a<P(n)\le b}}f(n)\right|^4
        \le\left(\sum_{\substack{n\le z\\a<P(n)\le b}}
        \mu^2(n)\tau_3(n)\right)^2
        \le\left(\sum_{\substack{n\le z\\a<P(n)\le b}}\tau_3(n)\right)^2.
\end{equation*}
In both cases the sums vanish for $0<z<1$, so the term is zero when
$x_i/y_{j-1}\le1$; otherwise the effective lower endpoint is
$\max\{x_i/y_j,1\}$, and the weighted Cauchy--Schwarz calculation above
applies verbatim.
This is the bound used in \eqref{eq:lambda-two-fourth}, so the remainder of the proof of Lemma \ref{lem:lambda-two} is unchanged.

For $\lambda_\ell^{(3)}$, the Cauchy--Schwarz and fourth-moment calculation in the proof of Proposition \ref{prop:caich-smoothing} is replaced by the same inequality.  Lemma \ref{lem:short-reciprocal-primes} is deterministic and unchanged.  Hence
\begin{equation*}
        \E\left[\lambda_\ell^{(3)}(x_i,y_j;f)^2\right]
        \ll \frac{(\log x_i)^4}{X^2(\log y_0)^2},
\end{equation*}
and the same Markov and union calculation gives the $\lambda_\ell^{(3)}$ estimate in \eqref{eq:auxiliary-events}.

The proof of \eqref{eq:W-moment} applies Minkowski's inequality and the hypercontractive inequality, followed by a deterministic count of primes in short multiplicative intervals.  In the Rademacher model Lemma \ref{lem:rademacher-hypercontractive} yields the required moment bound, and each nonnegative divisor sum acquires the restriction $\mu^2(n)=1$.  It is therefore no larger than the corresponding divisor sum in the Steinhaus proof.  We obtain the same estimate
\begin{equation*}
        \E W_\ell(x_i;f)^{m_0}
        \ll_{m_0}\left(\frac{x_i}{\log x_i}\right)^{m_0},
\end{equation*}
so the proof of the $W_\ell$ estimate in \eqref{eq:auxiliary-events} is unchanged.

The estimates for $L_\ell^{(12)}$ and $L_\ell^{(2)}$ are first-moment calculations.  Expanding the squares and using \eqref{eq:rademacher-orthogonality} restricts each diagonal sum to squarefree integers.  All terms are nonnegative, so these sums are bounded by the corresponding Steinhaus diagonal sums.  Consequently the estimates
\begin{equation*}
        \E L_\ell^{(12)}(x_i;f)
        \ll L\exp(-\ell^{99K}/2)
\end{equation*}
and
\begin{equation*}
        \E L_\ell^{(2)}(x_i;f)
        \ll \frac{J}{X\log y_0}
\end{equation*}
hold without change.  The Markov and union arguments following \eqref{eq:L12-expectation} and \eqref{eq:L2-expectation} give their summability estimates.  This proves all the assertions in \eqref{eq:auxiliary-events} for the Rademacher model.
\end{proof}

\subsection{The Rademacher low moment estimate}

For $Y\ge3$, define
\begin{equation}\label{eq:rademacher-euler-product}
        F_Y^{\mathrm R}(s)=\prod_{p\le Y}
        \left(1+\frac{f(p)}{p^s}\right).
\end{equation}
This finite Euler product is the Dirichlet polynomial of $f$ restricted to squarefree $Y$-smooth integers.  We first record the form of Harper's shifted estimate that is needed below.

\begin{lem}[Harper's shifted Rademacher estimate]\label{lem:harper-shifted-rademacher}
Let $x$ be sufficiently large, let $2/3\le q\le1$, and let $k,N\in\Z$ satisfy
\begin{equation*}
        0\le k\le\lfloor\log\log\log x\rfloor,
        \qquad |N|\le(\log\log x)^2.
\end{equation*}
Let
\begin{equation*}
        F_{k,x}^{\mathrm R}(s)=
        \prod_{p\le x^{e^{-(k+1)}}}
        \left(1+\frac{f(p)}{p^s}\right).
\end{equation*}
Then
\begin{multline}\label{eq:harper-shifted-full}
\E\left[\left(
        \int_{\substack{|t-N|\le1/2\\
        |t|>(\log\log x)^{-1/2}}}
        \left|F_{k,x}^{\mathrm R}
        \left(\tfrac12-\tfrac{k}{\log x}+it\right)\right|^2\dee t
        \right)^q\right]\\
\ll
        (1+|N|)^{q/4}
        \left(\frac{\log x}{e^k}
        \min\left\{1,
        \frac{1}{(1-q)\sqrt{\log\log x}}\right\}\right)^q.
\end{multline}
When $q=1$, the second entry in the minimum is understood to be $+\infty$.  The implied constant is absolute on this range of parameters.
\end{lem}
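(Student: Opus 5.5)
For $q=1$ the estimate is a direct second-moment computation, and for $2/3\le q<1$ I would rerun the Rademacher part of Harper's upper-bound argument in \cite{HarperLow}, centred at a general integer $N$ rather than at $0$, while tracking how the constants depend on $N$.

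\textbf{The case $q=1$.} Tonelli and independence give
$$
\E\left|1+\frac{f(p)}{p^{\sigma+it}}\right|^2=1+p^{-2\sigma},
$$
because the cross term $2f(p)\,\mathrm{Re}\,p^{-\sigma-it}$ has mean zero. This is independent of $t$. With $\sigma=\tfrac12-k/\log x$ and $p\le x^{e^{-(k+1)}}$, we have $p^{2k/\log x}\le e^{2ke^{-(k+1)}}$. Mertens then gives $\prod_p(1+p^{-2\sigma})\ll (\log x)/e^k$. Integrating over a $t$-set of measure at most $1$ proves \eqref{eq:harper-shifted-full} with no $N$-dependence at all.

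\textbf{The case $2/3\le q<1$.} I would follow Section 4 of \cite{HarperLow} in its Rademacher form.

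First, split $[N-1/2,N+1/2]$ (minus the excluded neighbourhood of $0$) into short intervals of length $1/\log x$ and replace $|F^{\mathrm R}_{k,x}|^2$ on each by a representative value. By subadditivity of $u^q$, it then suffices to bound the $q$-th moment of a sum over these points.

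Second, introduce Harper's good event $\mathcal{G}$. On $\mathcal{G}$, the partial Euler products over the dyadic-exponential prime ranges $x^{e^{-(m+1)}}<p\le x^{e^{-m}}$ stay below a random-walk barrier (for every $t$ in a unit interval). Off $\mathcal{G}$, use H\"older with the trivial $q=1$ bound. On $\mathcal{G}$, use the inequality $\E[(\ind_{\mathcal{G}}Y)^q]\le(\E\ind_{\mathcal{G}}Y)^q$. Then bound $\E\,\ind_{\mathcal{G}}|F^{\mathrm R}(\sigma+it)|^2$ by the two-point tilted Euler product estimate: under the measure tilted by $|F^{\mathrm R}|^2$, the log-increments form a random walk whose mean and variance agree with the Steinhaus case up to lower-order corrections. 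The ballot-type barrier probability then supplies the saving $\min\{1,1/((1-q)\sqrt{\log\log x})\}$.

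The only place where the Rademacher law differs from the Steinhaus law is the correction term in the tilted mean and variance:
$$
\E\left|1+\frac{f(p)}{p^{\sigma+it}}\right|^{2}\text{-tilted}\quad\text{involves}\quad \frac{\cos(2t\log p)}{p}.
$$
Here lies the role of the hypothesis $|t|>(\log\log x)^{-1/2}$. Summing these cosine terms over $p$ gives a quantity bounded by $\log|\zeta(1+2\sigma'+2it)|+O(1)$. For $(\log\log x)^{-1/2}<|t|\le(\log\log x)^2$, this is at most $\log\log(2+|t|)+O(\log\log\log x)$-type terms. The large-$|t|$ part is what produces the polynomial loss $(1+|N|)^{q/4}$: I would bound the Euler-product factor $\exp(\tfrac12\sum_p\cos(2t\log p)/p)$ crudely by $(1+|t|)^{1/4}$ and raise it to the power $q$. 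The small-$|t|$ exclusion keeps the barrier drift bounded, so Harper's ballot estimate applies uniformly.

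\textbf{Main obstacle.} I expect the hard step to be verifying that Harper's barrier and tilted-random-walk estimates remain uniform in $N$ on the range $|N|\le(\log\log x)^2$. Concretely, the ranges of $p$ where $2|t|\log p$ is not large (roughly $\log p\lesssim 1/|t|$) contribute a drift to the walk, and one must check that this drift is absorbed either into the barrier slack or into the stated factor $(1+|N|)^{q/4}$. I would try first to treat these primes as a separate initial segment of the walk, bounded trivially by the $q=1$ computation, and apply the ballot argument only to the remaining primes, where the shifted process is effectively Steinhaus-like.
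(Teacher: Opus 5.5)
Your $q=1$ case is fine, and Harper's Rademacher argument is the right source. The paper does not reprove this lemma: it quotes it from Section~4.4 of Harper's low-moments paper. There, $N=0$ comes from the Rademacher versions of his key propositions, and the final paragraph of that section supplies the changes for $1\le|N|\le(\log\log x)^2$ together with the factor $(1+|N|)^{q/4}$.

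Your reconstruction of the range $2/3\le q<1$, however, has a genuine gap at the complement of the good event. H\"older gives $\E[Y^q\mathbf{1}_{\overline{\mathcal{G}}}]\le(\E Y)^q\,\P(\overline{\mathcal{G}})^{1-q}$ with $\E Y\asymp\log x/e^k$. So you would need $\P(\overline{\mathcal{G}})^{1-q}\ll\min\{1,1/((1-q)\sqrt{\log\log x})\}^q$. But for the ballot estimate on $\mathcal{G}$ to deliver that same saving, the barrier can sit at most $u\ll\min\{\sqrt{\log\log x},1/(1-q)\}$ above the critical line. For fixed $q<1$, say $q=2/3$, this $u$ is bounded. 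The truncated Euler products, viewed as a log-correlated field in $t$ over a whole unit interval, cross a barrier at bounded height above the critical line with probability bounded below. So H\"older returns $(\log x/e^k)^q$ with no saving at all. Raising the barrier to shrink $\P(\overline{\mathcal{G}})$ weakens the ballot bound, and balancing the two loses a factor growing with $\log\log\log x$.

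What is missing is a direct estimate for the failure event itself. One decomposes it by the level at which the barrier is first broken, and by the position of $t$ at the scale on which the partial product at that level is essentially constant. One then applies subadditivity of $u\mapsto u^q$ only at that scale. Finally, the excess over the barrier is turned into a gain through $q$-th-power Euler-product moments, and this gain must be summable over levels. This is the content of Harper's separate key proposition for the failure event (and its Rademacher analogue); it does not follow from the $q=1$ bound.

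Your account of the $N$-dependence is also inaccurate in one place. Excluding $|t|\le(\log\log x)^{-1/2}$ does not keep the extra Rademacher drift bounded. For $(\log\log x)^{-1/2}<|t|<1$ one has $\frac12\sum_p\cos(2t\log p)/p\approx\frac12\log(1/|t|)$, which reaches $\frac14\log\log\log x$. Hence $\exp(\frac12\sum_p\cos(2t\log p)/p)\approx|t|^{-1/2}$ is not $\ll(1+|t|)^{1/4}$ on the $N=0$ interval. This is repairable: the drift pushes the tilted walk towards the barrier, and $|t|^{-1/2}$ is integrable. But together with the uniformity in $N$ that you leave open as your ``main obstacle'', it is exactly what the paper takes from Harper rather than proves.
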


\begin{proof}
This is the shifted unit-interval estimate obtained in the Rademacher upper-bound argument of Harper \cite[Section 4.4]{HarperLow}.  It is the estimate given there after the words ``it will suffice to show that''.  Harper first treats $N=0$ through the Rademacher analogues of his Key Propositions 3 and 4.  The final paragraph of that section explains the changes for $1\le |N|\le(\log\log x)^2$ and shows that translating the interval introduces the factor $(1+|N|)^{q/4}$ appearing above.  Harper's product at level $k$ is $F_{k,x}^{\mathrm R}$ as defined above.  The small interval $|t|\le(\log\log x)^{-1/2}$ is excluded in the Rademacher argument because multiplication of the prime signs by $p^{-it}$ does not preserve their distribution.
\end{proof}

\begin{lem}\label{lem:rademacher-shifted}
For $N\in\Z$ let
\begin{equation*}
        J_N(Y)=\frac{1}{\log Y}
        \int_{N-1/2}^{N+1/2}|F_Y^{\mathrm R}(\tfrac12+it)|^2\dee t.
\end{equation*}
Uniformly for $N\in\Z$ and $Y\ge3$,
\begin{equation}\label{eq:rademacher-shifted}
        \E[J_N(Y)^{2/3}]
        \ll(1+|N|)^{1/6}(\log\log Y)^{-1/3}.
\end{equation}
\end{lem}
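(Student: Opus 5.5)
We will derive \eqref{eq:rademacher-shifted} from Lemma~\ref{lem:harper-shifted-rademacher}. Take $k=0$, $q=2/3$ and $x=Y^e$, so that $x^{e^{-1}}=Y$ and $F_{0,x}^{\mathrm R}=F_Y^{\mathrm R}$. Since $\log x=e\log Y$, the abscissa is exactly $\tfrac12$, and $\log\log x=\log\log Y+1$. For bounded $Y$, and for any fixed finite range, the trivial bound below suffices. So assume $Y$ is large. We split into the regime $|N|\le(\log\log x)^2$ and the complementary large-$|N|$ regime.

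\emph{Regime $|N|\le(\log\log x)^2$.} Divide the integral defining $J_N(Y)$ into two parts: the part where $|t|>(\log\log x)^{-1/2}$, and the part where $|t|\le(\log\log x)^{-1/2}$. The second part is nonempty only for $N=0$. For the first part, \eqref{eq:harper-shifted-full} with $q=2/3$ bounds the $2/3$ moment of the integral by
\[
(1+|N|)^{1/6}(\log x)^{2/3}(\log\log x)^{-1/3}.
\]
Dividing by $(\log Y)^{2/3}\asymp(\log x)^{2/3}$ gives exactly the claimed bound.

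For the small interval near $t=0$ we use Jensen and orthogonality:
\[
\E|F_Y^{\mathrm R}(\tfrac12+it)|^2=\prod_{p\le Y}(1+1/p)\ll\log Y .
\]
So the first moment of that piece, normalised by $1/\log Y$, is $\ll(\log\log x)^{-1/2}$. Its $2/3$ power is then $\ll(\log\log Y)^{-1/3}$. Finally, subadditivity of $u\mapsto u^{2/3}$ combines the two pieces.

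\emph{Regime $|N|>(\log\log x)^2$.} Harper's estimate is unavailable here, so we use the trivial bound. By Jensen and the same orthogonality computation,
\[
\E J_N(Y)^{2/3}\le(\E J_N(Y))^{2/3}\ll1.
\]
In this range $(1+|N|)^{1/6}\ge(\log\log x)^{1/3}$, so $1\le(1+|N|)^{1/6}(\log\log Y)^{-1/3}\cdot O(1)$, and \eqref{eq:rademacher-shifted} follows. The exponent $1/6$ is precisely what makes this trivial bound sufficient at the threshold $|N|\asymp(\log\log Y)^2$.

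The main obstacle is matching parameters and ranges correctly. One must check that the choice $x=Y^e$ with $k=0$ puts the line exactly at $\tfrac12$. One must also check that the excluded small interval and the large-$|N|$ range are both handled by the crude second moment, with losses absorbed by the factor $(1+|N|)^{1/6}$ or by $(\log\log Y)^{-1/2}\le(\log\log Y)^{-1/3}$. Uniformity in $Y$ follows because all implied constants above are absolute.
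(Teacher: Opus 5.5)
Your proposal is correct and follows the paper's proof essentially step for step: Harper's shifted estimate with $x=Y^e$, $k=0$, $q=2/3$ in the range of small $|N|$; Jensen plus the second moment $\prod_{p\le Y}(1+1/p)\ll\log Y$ for both the excised window near $t=0$ and the large-$|N|$ range; and subadditivity of $u^{2/3}$ to recombine. The differences are cosmetic. You excise exactly Harper's window $|t|\le(\log\log x)^{-1/2}$ and split at $(\log\log x)^2$, whereas the paper uses $(\log\log Y)^{-1/2}$ and $(\log\log Y)^2$. Also, the abscissa being exactly $\tfrac12$ comes from $k=0$, not from $\log x=e\log Y$.
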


\begin{proof}
Take $x=Y^e$, $k=0$, and $q=2/3$ in Lemma \ref{lem:harper-shifted-rademacher}.  Then $x^{e^{-1}}=Y$, so $F_{0,x}^{\mathrm R}=F_Y^{\mathrm R}$.  Moreover,
\begin{equation*}
        \min\left\{1,
        \frac{1}{(1-q)\sqrt{\log\log x}}\right\}
        \ll(\log\log Y)^{-1/2}.
\end{equation*}
After division by $(\log Y)^q$, \eqref{eq:harper-shifted-full} gives
\begin{equation}\label{eq:harper-rademacher-shift}
\E\left[\left(
        \frac{1}{\log Y}
        \int_{\substack{|t-N|\le1/2\\|t|>\delta_Y}}
        |F_Y^{\mathrm R}(\tfrac12+it)|^2\dee t
        \right)^q\right]
\ll(1+|N|)^{q/4}(\log\log Y)^{-q/2},
\end{equation}
where
\begin{equation*}
        \delta_Y=(\log\log Y)^{-1/2},
\end{equation*}
for $|N|\le(\log\log Y)^2$.  Here $\log\log x=\log\log Y+1$.  The cutoff in Lemma \ref{lem:harper-shifted-rademacher} is $(\log\log x)^{-1/2}<\delta_Y$, so restricting the integral further to $|t|>\delta_Y$ can only decrease it.  Replacing the remaining occurrences of $\log\log x$ by $\log\log Y$ changes only the implied constant.  For all sufficiently large $Y$, one has $\delta_Y<1/2$, so among the integer translates only the interval with $N=0$ meets the omitted neighbourhood $|t|\le\delta_Y$.

When $N=0$, the omitted interval satisfies
\begin{equation*}
\E\left[\left(\frac{1}{\log Y}
        \int_{|t|\le\delta_Y}|F_Y^{\mathrm R}(\tfrac12+it)|^2\dee t
        \right)^q\right]
\le
\left(\frac{1}{\log Y}\int_{|t|\le\delta_Y}
        \E|F_Y^{\mathrm R}(\tfrac12+it)|^2\dee t\right)^q.
\end{equation*}
Independence gives, uniformly in $t$,
\begin{equation}\label{eq:rademacher-second-moment-product}
        \E|F_Y^{\mathrm R}(\tfrac12+it)|^2
        =\prod_{p\le Y}\left(1+\frac{1}p\right)\ll\log Y.
\end{equation}
The omitted contribution is therefore $O(\delta_Y^q)=O((\log\log Y)^{-q/2})$.  Since $u\mapsto u^q$ is subadditive for $0<q\le1$, this combines with \eqref{eq:harper-rademacher-shift} to give the required estimate for $|N|\le(\log\log Y)^2$.

For $|N|>(\log\log Y)^2$, Jensen's inequality and \eqref{eq:rademacher-second-moment-product} give
\begin{equation*}
        \E[J_N(Y)^q]\le(\E J_N(Y))^q\ll1.
\end{equation*}
On this range,
\begin{equation*}
        (1+|N|)^{q/4}(\log\log Y)^{-q/2}\gg1,
\end{equation*}
so the same bound holds.  Taking $q=2/3$ proves \eqref{eq:rademacher-shifted}.  Bounded values of $Y$ are absorbed by enlarging the implied constant.
\end{proof}

\begin{lem}\label{lem:rademacher-low-moment}
Uniformly for $Y\ge3$,
\begin{equation*}
        \E\left[\left(
        \frac{1}{\log Y}\int_{-\infty}^{\infty}
        \left|\frac{F_Y^{\mathrm R}(\tfrac12+it)}{\tfrac12+it}\right|^2\dee t
        \right)^{2/3}\right]
        \ll(\log\log Y)^{-1/3}.
\end{equation*}
\end{lem}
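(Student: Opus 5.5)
We follow the Steinhaus argument of Lemma~\ref{lem:steinhaus-harper}, replacing rotational invariance by the shifted estimate of Lemma~\ref{lem:rademacher-shifted}. Split the real line into the unit intervals $[N-\tfrac12,N+\tfrac12]$, $N\in\Z$. On the $N$-th interval,
$$
        |\tfrac12+it|^{-2}\ll(1+|N|)^{-2},
$$
so the full-line quantity is bounded by a constant times
$$
        \sum_{N\in\Z}(1+|N|)^{-2}J_N(Y).
$$
Since $u\mapsto u^{2/3}$ is subadditive on $[0,\infty)$, the $2/3$ power of this sum is at most
$$
        \sum_{N\in\Z}(1+|N|)^{-4/3}J_N(Y)^{2/3}.
$$
The interchange of expectation and the countable sum is justified by Tonelli, because all terms are nonnegative.

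We then take expectations and insert \eqref{eq:rademacher-shifted} termwise. This gives the bound
$$
        (\log\log Y)^{-1/3}\sum_{N\in\Z}(1+|N|)^{-4/3+1/6}
        =(\log\log Y)^{-1/3}\sum_{N\in\Z}(1+|N|)^{-7/6}.
$$
The series converges because $7/6>1$, so the bound is $\ll(\log\log Y)^{-1/3}$ uniformly in $Y\ge3$. Lemma~\ref{lem:rademacher-shifted} is already uniform for $Y\ge3$ and covers both ranges of $N$, including $|N|>(\log\log Y)^2$, so no separate treatment of small $Y$ or large $|N|$ is needed.

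The only point requiring care is the exponent bookkeeping. Unlike the Steinhaus case, the shifted bound grows like $(1+|N|)^{1/6}$. The subadditivity step converts the decay $(1+|N|)^{-2}$ of the weight into $(1+|N|)^{-4/3}$, which still beats this growth with room to spare. There is no genuine obstacle here, since all the difficulty has been absorbed into Lemma~\ref{lem:harper-shifted-rademacher} and Lemma~\ref{lem:rademacher-shifted}.
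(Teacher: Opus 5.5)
Your proposal is correct and matches the paper's proof essentially step for step. Both partition the line into unit intervals and use the weight bound $(1+|N|)^{-2}$, then apply subadditivity of $u^{2/3}$ and Lemma~\ref{lem:rademacher-shifted} termwise. This reduces everything to the convergent series $\sum_{N\in\Z}(1+|N|)^{-7/6}$.
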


\begin{proof}
Partition the real line into $I_N=[N-1/2,N+1/2]$.  On $I_N$,
\begin{equation*}
        |\tfrac12+it|^{-2}\ll(1+|N|)^{-2}.
\end{equation*}
Since $u\mapsto u^{2/3}$ is subadditive on $[0,\infty)$, Lemma \ref{lem:rademacher-shifted} gives
\begin{align*}
&\E\left[\left(
        \frac{1}{\log Y}\int_{-\infty}^{\infty}
        \left|\frac{F_Y^{\mathrm R}(\tfrac12+it)}{\tfrac12+it}\right|^2\dee t
        \right)^{2/3}\right]\\
&\qquad\ll\sum_{N\in\Z}(1+|N|)^{-4/3}\E[J_N(Y)^{2/3}]\\
&\qquad\ll(\log\log Y)^{-1/3}
        \sum_{N\in\Z}(1+|N|)^{-4/3+1/6}.
\end{align*}
The last exponent is $-7/6<-1$, so the series converges.
\end{proof}

\subsection{The supermartingale and conditional block moment}

For $0\le j\le J$, abbreviate $F_j^{\mathrm R}=F_{y_j}^{\mathrm R}$ and define
\begin{equation}\label{eq:rademacher-Ij}
        I_j^{\mathrm R}=\left(\frac{\log y_j}{\log y_0}\right)^{-1/L}
        \frac{1}{\log y_j}\int_{-\infty}^{\infty}
        \left|\frac{F_j^{\mathrm R}(\tfrac12+it)}{\tfrac12+it}\right|^2\dee t.
\end{equation}

\begin{lem}\label{lem:rademacher-supermartingale}
For all sufficiently large $\ell$, $(I_j^{\mathrm R})_{0\le j\le J}$ is a nonnegative supermartingale with respect to $(\CF_{y_j})_{0\le j\le J}$.  Moreover,
\begin{equation*}
        \E\left[(I_0^{\mathrm R})^{2/3}\right]\ll_KL^{-1/3}.
\end{equation*}
\end{lem}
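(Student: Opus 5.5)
The plan is to follow the proof of Lemma \ref{lem:supermartingale} line by line, replacing the Steinhaus Euler factors by the squarefree factors $1+f(p)p^{-s}$.

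\textbf{Integrability of $I_0^{\mathrm R}$.} Tonelli, independence and \eqref{eq:rademacher-second-moment-product} give
$$
\E I_0^{\mathrm R}=\frac{1}{\log y_0}\prod_{p\le y_0}\Bigl(1+\frac1p\Bigr)\int_{-\infty}^\infty\frac{\dee t}{|\tfrac12+it|^2}=\frac{2\pi}{\log y_0}\prod_{p\le y_0}\Bigl(1+\frac1p\Bigr)\ll1.
$$
Hence $I_0^{\mathrm R}$ is integrable. Almost sure finiteness of each $I_j^{\mathrm R}$ is immediate, since the finite product is bounded in $t$.

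\textbf{Conditional multiplier.} Condition on $\CF_{y_{j-1}}$ and expand the new factors. For a Rademacher sign $f(p)$ and real $t$,
$$
\E\left|1+\frac{f(p)}{p^{1/2+it}}\right|^2=1+\frac1p+\E\Bigl[2\,\mathrm{Re}\,\frac{f(p)}{p^{1/2+it}}\Bigr]=1+\frac1p,
$$
because $\E f(p)=0$ and $f(p)^2=1$. This value is independent of $t$. The factors for distinct primes are independent of each other and of $\CF_{y_{j-1}}$, and the integrand is nonnegative, so conditional Tonelli gives
$$
\E[I_j^{\mathrm R}\mid\CF_{y_{j-1}}]=b_j^{\mathrm R}I_{j-1}^{\mathrm R},\qquad
b_j^{\mathrm R}=e^{-1/\ell}e^{-1/(\ell L)}\prod_{y_{j-1}<p\le y_j}\Bigl(1+\frac1p\Bigr).
$$
Here the factors $e^{-1/\ell}$ and $e^{-1/(\ell L)}$ come from \eqref{eq:block-ratio} and from the tilt, exactly as before.

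\textbf{Bounding $b_j^{\mathrm R}$.} We have $\log(1+1/p)\le 1/p$. By Lemma \ref{lem:quantitative-mertens} the reciprocal-prime mass of each block is $1/\ell+O(e^{-c\sqrt{\log y_0}})$, so
$$
\log b_j^{\mathrm R}\le-\frac{1}{\ell L}+O\bigl(e^{-c\sqrt{\log y_0}}\bigr)\le-\frac1{2\ell L}
$$
for large $\ell$. This is in fact slightly easier than the Steinhaus case, since there is no $p^{-2}$ correction. Thus $b_j^{\mathrm R}\le1$, and by induction every $I_j^{\mathrm R}$ is integrable. Together these give the supermartingale property.

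\textbf{The $2/3$ moment.} Apply Lemma \ref{lem:rademacher-low-moment} with $Y=y_0$. The tilt equals $1$ at $j=0$, so $I_0^{\mathrm R}$ is exactly the quantity in that lemma, and
$$
\E\bigl[(I_0^{\mathrm R})^{2/3}\bigr]\ll(\log\log y_0)^{-1/3}.
$$
Finally, $\log\log y_0=(1-K/\ell)\ell^K\log2\asymp_K L$ by \eqref{eq:L-comparability}, which gives the bound $\ll_K L^{-1/3}$.

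\textbf{Main obstacle.} The only delicate point is the low-moment input, and it is already handled by Lemma \ref{lem:rademacher-low-moment}. For the multiplier, the one thing to check is that the cross term vanishes for every real $t$ even though the signs are not rotation invariant; this holds because only $\E f(p)=0$ is needed.
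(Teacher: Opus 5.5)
Your proposal is correct and follows the paper's proof essentially step for step: Tonelli for $\E I_0^{\mathrm R}\ll1$, the exact conditional multiplier $b_j^{\mathrm R}$, quantitative Mertens to get $b_j^{\mathrm R}\le1$, and Lemma~\ref{lem:rademacher-low-moment} at $Y=y_0$ together with $\log\log y_0\asymp_K L$. The only difference is cosmetic: you use the one-sided bound $\log(1+1/p)\le 1/p$, whereas the paper carries a two-sided $O(\sum_p p^{-2})$ term; either suffices for $b_j^{\mathrm R}\le1$.
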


\begin{proof}
First, Tonelli's theorem, \eqref{eq:rademacher-second-moment-product}, and Mertens' theorem give
\begin{equation*}
        \E I_0^{\mathrm R}
        =\frac{2\pi}{\log y_0}\prod_{p\le y_0}\left(1+\frac{1}p\right)\ll1.
\end{equation*}
Fix $1\le j\le J$ and condition on $\CF_{y_{j-1}}$.  For every $t\in\R$,
\begin{equation*}
        \E\left|1+\frac{f(p)}{p^{1/2+it}}\right|^2
        =1+\frac{1}p.
\end{equation*}
Consequently,
\begin{equation}\label{eq:rademacher-Ij-multiplier}
        \E\left[I_j^{\mathrm R}\mid\CF_{y_{j-1}}\right]
        =b_j^{\mathrm R}I_{j-1}^{\mathrm R},
\end{equation}
where
\begin{equation*}
        b_j^{\mathrm R}=e^{-1/\ell}e^{-1/(\ell L)}
        \prod_{y_{j-1}<p\le y_j}\left(1+\frac{1}p\right).
\end{equation*}
By Lemma \ref{lem:quantitative-mertens}, uniformly in $j$,
\begin{equation*}
\begin{aligned}
        \log b_j^{\mathrm R}
        &=-\frac{1}{\ell}-\frac{1}{\ell L}
        +\sum_{y_{j-1}<p\le y_j}\log\left(1+\frac{1}p\right)\\
        &=-\frac{1}{\ell L}
        +O\left(\sum_{y_{j-1}<p\le y_j}\frac{1}{p^2}\right)
        +O\left(e^{-c\sqrt{\log y_0}}\right)\\
        &=-\frac{1}{\ell L}+o_K\left(\frac{1}{\ell L}\right)<0
\end{aligned}
\end{equation*}
for all sufficiently large $\ell$, uniformly for $1\le j\le J$, including the final block.  Thus $b_j^{\mathrm R}\le1$, and \eqref{eq:rademacher-Ij-multiplier} proves the supermartingale property exactly as in Lemma \ref{lem:supermartingale}.  Finally, Lemma \ref{lem:rademacher-low-moment}, applied with $Y=y_0$, and $\log\log y_0\asymp_KL$ give the stated bound for $I_0^{\mathrm R}$.
\end{proof}

Let
\begin{equation*}
        \CQ_j=\{y_{j-1}\}\cup
        \{q:y_{j-1}<q\le y_j,\ q\ \text{prime}\},
\end{equation*}
and define, for $1\le j\le J$,
\begin{equation*}
        U_j^{\mathrm R}=\frac{1}{\log y_j}\int_0^\infty
        \max_{q\in\CQ_j}|\Psi_f(z,q)|^2\frac{\dee z}{z^2},
\end{equation*}
together with
\begin{equation*}
        U_0^{\mathrm R}
        =\frac{1}{\log y_0}\int_0^\infty
        |\Psi_f(z,y_0)|^2\frac{\dee z}{z^2}
        =\frac{1}{2\pi}I_0^{\mathrm R}.
\end{equation*}

\begin{prop}\label{prop:rademacher-block-moment}
Let $r\ge2$ be fixed.  For all sufficiently large $\ell$, uniformly for $1\le j\le J$,
\begin{equation*}
        \E\left[(U_j^{\mathrm R})^r\mid\CF_{y_{j-1}}\right]
        \ll_r(I_{j-1}^{\mathrm R})^r
        \qquad\text{almost surely}.
\end{equation*}
\end{prop}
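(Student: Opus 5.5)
The plan is to transcribe the proof of Proposition \ref{prop:block-moment}, substituting the Rademacher tools at the three points where the model enters: the martingale structure, the hypercontractive step, and the local Euler factors. Fix $j$ and condition on $\CF_{y_{j-1}}$, revealing the primes of $(y_{j-1},y_j]$ in increasing order.

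\emph{Martingale and Doob step.} If a prime $p$ follows the cutoff $q\in\CQ_j$, squarefree support gives
$$
\Psi_f(z,p)=\Psi_f(z,q)+f(p)\Psi_f(z/p,q).
$$
Here $\Psi_f(z/p,q)$ is measurable before $f(p)$ is revealed, and $\E f(p)=0$. So $q\mapsto\Psi_f(z,q)$ is a martingale along $\CQ_j$, and conditional Doob at exponent $2r$ (Lemma \ref{lem:doob-inequalities}) bounds the conditional $2r$th moment of $\max_{q\in\CQ_j}|\Psi_f(z,q)|$ by $\ll_r\E[|\Psi_f(z,y_j)|^{2r}\mid\CF_{y_{j-1}}]$.

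\emph{Truncation and Minkowski.} Introduce the truncations $U^{\mathrm R}_{j,Z}$ over $1\le z\le Z$ exactly as before. Conditional Minkowski then reduces matters to
$$
\frac1{\log y_j}\int_1^Z\E[|\Psi_f(z,y_j)|^{2r}\mid\CF_{y_{j-1}}]^{1/r}\,\frac{\dee z}{z^2}.
$$

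\emph{Hypercontractive step.} Every squarefree $y_j$-smooth $n$ factors uniquely as $n=dm$, with $d$ squarefree and composed of block primes, and $P(m)\le y_{j-1}$. This gives
$$
\Psi_f(z,y_j)=\sum_d f(d)\Psi_f(z/d,y_{j-1}),
$$
where $d$ ranges over squarefree integers with all prime factors in $(y_{j-1},y_j]$. The coefficients $\Psi_f(z/d,y_{j-1})$ are $\CF_{y_{j-1}}$-measurable, and only finitely many block primes matter since $d\le z$. The conditional inequality \eqref{eq:rademacher-conditional-hypercontractive} therefore applies, with $\CP$ the block primes up to $z$ and $\CG=\CF_{y_{j-1}}$. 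It bounds the conditional $L^{2r}$-norm squared by $\sum_d\tau_{2r-1}(d)|\Psi_f(z/d,y_{j-1})|^2$.

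\emph{Reassembly.} Tonelli and the substitution $z=du$ factor out
$$
\sum_d\frac{\tau_{2r-1}(d)}{d}=\prod_{y_{j-1}<p\le y_j}\left(1+\frac{2r-1}{p}\right)\le\exp\left(\frac{2r-1}{\ell}+o(1)\right)\ll_r1,
$$
using Lemma \ref{lem:quantitative-mertens}. What remains is $\frac1{\log y_j}\int_0^\infty|\Psi_f(u,y_{j-1})|^2\,\dee u/u^2$.

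\emph{Parseval and normalisation.} Lemma \ref{lem:parseval-dirichlet} applies with $\sigma=1/2$; the Dirichlet series of $f$ restricted to squarefree $y_{j-1}$-smooth integers is exactly $F^{\mathrm R}_{j-1}(s)$, a finite product. This identifies the remaining integral with
$$
\frac1{2\pi}\,\frac{\log y_{j-1}}{\log y_j}\left(\frac{\log y_{j-1}}{\log y_0}\right)^{1/L}I^{\mathrm R}_{j-1}.
$$
The residual factor is $e^{-1/\ell}\exp((j-1)/(\ell L))\le2$ by \eqref{eq:J-count}, for all sufficiently large $\ell$. Hence $\E[(U^{\mathrm R}_{j,Z})^r\mid\CF_{y_{j-1}}]^{1/r}\ll_r I^{\mathrm R}_{j-1}$ uniformly in $Z$, and conditional monotone convergence as $Z\to\infty$ finishes the proof.

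\emph{Main obstacle.} The step needing care is justifying the conditional hypercontractive bound. The coefficients must be frozen $\CF_{y_{j-1}}$-measurable quantities, while the remaining signs $f(p)$ for block primes are independent of $\CF_{y_{j-1}}$. Finiteness of the sum over $d$ for each fixed $z$ avoids any $L^2$-limit issue. Everything else is a verbatim transfer, with the local factor $1+(2r-1)/p$ in place of $(1-1/p)^{-(2r-1)}$.
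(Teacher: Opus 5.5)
Your proposal is correct and follows the paper's proof essentially step for step. The shared steps are: the squarefree cutoff identity $\Psi_f(z,p)=\Psi_f(z,q)+f(p)\Psi_f(z/p,q)$ with conditional Doob at exponent $2r$, the truncation $U^{\mathrm R}_{j,Z}$ with conditional Minkowski, the conditional form of Lemma~\ref{lem:rademacher-hypercontractive} on the block, the local factor $1+(2r-1)/p$, Parseval with the residual factor $e^{-1/\ell}\exp((j-1)/(\ell L))\le2$, and conditional monotone convergence. The only point the paper states that you leave implicit is that $U^{\mathrm R}_{j,Z}\uparrow U^{\mathrm R}_j$, which holds because $\Psi_f(z,q)=0$ for $0<z<1$.
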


\begin{proof}
Fix $j$ and condition on $\CF_{y_{j-1}}$.  Reveal the primes in $(y_{j-1},y_j]$ in increasing order.  If $p$ follows the cutoff $q$, squarefree support gives
\begin{equation}\label{eq:rademacher-cutoff-martingale}
        \Psi_f(z,p)=\Psi_f(z,q)+f(p)\Psi_f(z/p,q).
\end{equation}
The second term has conditional mean zero and its coefficient is measurable before $f(p)$ is revealed.  Thus $q\mapsto\Psi_f(z,q)$ is a martingale along $\CQ_j$, and conditional Doob at exponent $2r$ removes the maximum over $q$, as in \eqref{eq:doob-step}.

For $Z\ge1$, define
$$
        U_{j,Z}^{\mathrm R}
        =
        \frac{1}{\log y_j}\int_1^Z
        \max_{q\in\CQ_j}|\Psi_f(z,q)|^2\frac{\dee z}{z^2}.
$$
Since $\Psi_f(z,q)=0$ for $0<z<1$, we have
$$
        U_{j,Z}^{\mathrm R}\uparrow U_j^{\mathrm R}
        \qquad(Z\to\infty).
$$
Conditional Minkowski, followed by Doob, gives
\begin{equation}\label{eq:rademacher-minkowski}
        \E\left[(U_{j,Z}^{\mathrm R})^r\mid\CF_{y_{j-1}}\right]^{1/r}
        \ll_r\frac{1}{\log y_j}\int_1^Z
        \E\left[|\Psi_f(z,y_j)|^{2r}\mid\CF_{y_{j-1}}\right]^{1/r}
        \frac{\dee z}{z^2}.
\end{equation}
Every squarefree $y_j$-smooth integer factors uniquely as $n=dm$, where $d$ is squarefree and supported on the primes in $(y_{j-1},y_j]$, while $P(m)\le y_{j-1}$.  Hence
\begin{equation*}
        \Psi_f(z,y_j)=
        \sum_{\substack{d\le z\\\mu^2(d)=1\\p\mid d\Rightarrow y_{j-1}<p\le y_j}}
        f(d)\Psi_f\left(\frac{z}{d},y_{j-1}\right).
\end{equation*}
Lemma \ref{lem:rademacher-hypercontractive}, applied conditionally to the new prime signs, yields
\begin{equation}\label{eq:rademacher-hyper-block}
        \E\left[|\Psi_f(z,y_j)|^{2r}\mid\CF_{y_{j-1}}\right]^{1/r}
        \le\sum_{\substack{d\le z\\\mu^2(d)=1\\p\mid d\Rightarrow y_{j-1}<p\le y_j}}
        \tau_{2r-1}(d)\left|\Psi_f\left(\frac{z}{d},y_{j-1}\right)\right|^2.
\end{equation}
Substitute \eqref{eq:rademacher-hyper-block} into \eqref{eq:rademacher-minkowski}, interchange sum and integral by Tonelli, and let $z=du$.  The new-prime divisor factor is
\begin{equation*}
        \sum_{\substack{d\ge1\\\mu^2(d)=1\\p\mid d\Rightarrow y_{j-1}<p\le y_j}}
        \frac{\tau_{2r-1}(d)}d
        =\prod_{y_{j-1}<p\le y_j}
        \left(1+\frac{2r-1}{p}\right) \ll_r1,
\end{equation*}
since $d$ is squarefree and the reciprocal-prime mass of the block is $1/\ell+o(1)$.  What remains is
\begin{equation*}
        \frac{1}{\log y_j}\int_0^\infty
        |\Psi_f(u,y_{j-1})|^2\frac{\dee u}{u^2}.
\end{equation*}
The Dirichlet series of the squarefree $y_{j-1}$-smooth function is $F_{j-1}^{\mathrm R}$.  Lemma \ref{lem:parseval-dirichlet} and \eqref{eq:rademacher-Ij} therefore identify the preceding formula with a multiple of $I_{j-1}^{\mathrm R}$, exactly as in the last part of the proof of Proposition \ref{prop:block-moment}.  The residual factor is exactly
$e^{-1/\ell}\exp((j-1)/(\ell L))$, which is at most $2$ uniformly for
$1\le j\le J$, including the final block, by \eqref{eq:J-count}.  Thus
\begin{equation*}
        \E\left[(U_{j,Z}^{\mathrm R})^r\mid\CF_{y_{j-1}}\right]^{1/r}
        \ll_r I_{j-1}^{\mathrm R}
\end{equation*}
uniformly in $Z$.  Conditional monotone convergence completes the proof.
\end{proof}

\subsection{Completion of the proof}

\begin{proof}[Proof of the Rademacher case of Proposition \ref{prop:fixed-r}]
Set
\begin{equation*}
        T(\ell)=\ell^{10},\qquad
        T_1(\ell)=\frac{T(\ell)}{\ell\log\ell},\qquad
        H_{\ell,r}=T_1(\ell)L^{-1/2+1/r}.
\end{equation*}
The Rademacher largest-prime decomposition is Lemma \ref{lem:rademacher-decomposition}, the smoothing estimate is Proposition \ref{prop:rademacher-smoothing}, and the stopped Euler-product estimates are given by Lemmas \ref{lem:rademacher-low-moment} and \ref{lem:rademacher-supermartingale}.  Proposition \ref{prop:rademacher-block-moment} gives the conditional block moment.  With these replacements, the stopping argument in the proof of Proposition \ref{prop:principal} gives
\begin{equation}\label{eq:rademacher-principal}
        \sum_{\ell\ge2}\P\left(
        \sup_{X_{\ell-1}<x_i\le X_\ell}\sup_{0\le j\le J}
        M_\ell(x_i,y_j;f)>C_rT_1(\ell)L^{-1/2+1/r}\right)<\infty.
\end{equation}
For completeness, the stopping event is
\begin{equation*}
        \CS_\ell^{\mathrm R}=
        \left\{\max_{0\le j\le J}I_j^{\mathrm R}
        \le T_1(\ell)^{1/2}L^{-1/2}\right\}.
\end{equation*}
Doob's inequality gives $\sum_\ell\P((\CS_\ell^{\mathrm R})^c)<\infty$.  Let
\begin{equation*}
        \CS_{\ell,j-1}^{\mathrm R}
        =
        \left\{I_{j-1}^{\mathrm R}
        \le T_1(\ell)^{1/2}L^{-1/2}\right\}.
\end{equation*}
Proposition \ref{prop:rademacher-block-moment} and conditional Markov give
\begin{equation*}
\ind_{\CS_{\ell,j-1}^{\mathrm R}}
\P\left(U_j^{\mathrm R}>C_rH_{\ell,r}
\mid\CF_{y_{j-1}}\right)
\ll_rT_1(\ell)^{-r/2}L^{-1}.
\end{equation*}
Summing over $J\ll_KL$ blocks gives a summable bound.  On $\CS_\ell^{\mathrm R}$, the identity $U_0^{\mathrm R}=I_0^{\mathrm R}/(2\pi)$ controls the endpoint $j=0$, proving \eqref{eq:rademacher-principal}.

Combining \eqref{eq:rademacher-principal} with Proposition \ref{prop:rademacher-smoothing} gives, for some $C_*=C_*(r,K,c_0,m_0)$,
\begin{equation}\label{eq:rademacher-variance}
        \sum_{\ell\ge1}\P\left(
        \sup_{X_{\ell-1}<x_i\le X_\ell}
        \frac{V_\ell(x_i;f)}{x_i}>C_*T(\ell)L^{-1/2+1/r}\right)<\infty.
\end{equation}
Indeed, the principal term has the required scale, while the auxiliary terms have scale $T(\ell)L^{-1/2}$ by \eqref{eq:auxiliary-events}.

It remains to repeat the final martingale argument.  Let
\begin{equation*}
        R(x)=(\log\log x)^{1/4+1/(2r)+\eta},
\end{equation*}
and, for each test point $x_i$ in the $\ell$-th range, let
\begin{equation*}
        \Sigma_{\ell,i}^{\mathrm R}
        =\left\{V_\ell(x_i;f)
        \le C_*x_iT(\ell)L^{-1/2+1/r}\right\}.
\end{equation*}
Equation \eqref{eq:rademacher-variance} gives
\begin{equation}\label{eq:rademacher-variance-union}
        \sum_{\ell\ge1}\P\left(
        \bigcup_{X_{\ell-1}<x_i\le X_\ell}
        (\Sigma_{\ell,i}^{\mathrm R})^c\right)<\infty.
\end{equation}

Enumerate the primes in $(y_0,y_J]$ as $p_1<\cdots<p_N$, and define
\begin{equation*}
        \CH_0=\CF_{y_0},
        \qquad
        \CH_k=\CF_{y_0}\vee
        \sigma(f(p_h):1\le h\le k).
\end{equation*}
Let
\begin{equation*}
        Z_k=f(p_k)\Psi_f'\left(\frac{x_i}{p_k},p_k\right),
        \qquad
        S_k=\left|\Psi_f'\left(\frac{x_i}{p_k},p_k\right)\right|.
\end{equation*}
By the strict smoothness condition, $S_k$ is $\CH_{k-1}$-measurable and
$\E[Z_k\mid\CH_{k-1}]=0$.  Since $S_k\le x_i/p_k$, every $Z_k$ is bounded.  Thus $(Z_k)$ is a real martingale difference sequence and
\begin{equation*}
        \sum_{k=1}^NS_k^2=V_\ell(x_i;f),\qquad
        \sum_{k=1}^NZ_k=M_f^{(1)}(x_i).
\end{equation*}
Lemma \ref{lem:stopped-hoeffding} therefore gives
\begin{equation*}
\begin{aligned}
&\P\left(
|M_f^{(1)}(x_i)|\ge\sqrt{x_i}R(x_i),\
\Sigma_{\ell,i}^{\mathrm R}\right)\\
&\qquad\le
2\exp\left(-\frac{R(x_i)^2}
{10C_*T(\ell)L^{-1/2+1/r}}\right).
\end{aligned}
\end{equation*}
By \eqref{eq:L-comparability},
\begin{equation*}
        \frac{R(x_i)^2}{T(\ell)L^{-1/2+1/r}}
        \asymp\frac{L^{1+2\eta}}{T(\ell)}
        =\ell^{K(1+2\eta)-10}.
\end{equation*}
There are at most $\exp(C_{c_0}\ell^K)$ test points in the $\ell$-th range.  Hence a union bound, followed by \eqref{eq:rademacher-variance-union}, gives
\begin{equation*}
\begin{aligned}
&\P\left(
\sup_{X_{\ell-1}<x_i\le X_\ell}
\frac{|M_f^{(1)}(x_i)|}{\sqrt{x_i}R(x_i)}>1\right)
\P\left(\bigcup_{X_{\ell-1}<x_i\le X_\ell}
(\Sigma_{\ell,i}^{\mathrm R})^c\right)
+2\exp\left(C_{c_0}\ell^K-c\ell^{K(1+2\eta)-10}\right).
\end{aligned}
\end{equation*}
The second term is summable because $2K\eta>10$, and the first is summable by \eqref{eq:rademacher-variance-union}.  The smooth contribution has summable exceptional probabilities by the argument following Lemma \ref{lem:rademacher-decomposition}.  Thus, almost surely,
\begin{equation*}
        |M_f(x_i)|\le2\sqrt{x_i}R(x_i)
\end{equation*}
for every sufficiently large test point $x_i$.  Lemma \ref{lem:sparse-points} and the interpolation following \eqref{eq:reduction-goal} extend this estimate to all sufficiently large $x$.  This proves the Rademacher case of Proposition \ref{prop:fixed-r}, and hence completes the proof of Theorem \ref{thm:main}.
 \end{proof}

\end{document}